\def\LaTeX{\leavevmode L\raise.42ex
    \hbox{\kern-.3em\size{\sf@size}{0pt}\selectfont A}\kern-.15em\TeX}
\newcommand{\BibTeX}{{\rm B\kern-.05em{\sc
          i\kern-.025emb}\kern-.08em\TeX}}
\def\@currentlabel{2.1}\label{e:dispaa}
\def\@currentlabel{2.21}\label{e:dispau}
\def\@currentlabel{2.22}\label{e:dispav}
\def\@currentlabel{2.23}\label{e:dispaw}
\def\@currentlabel{2.24}\label{e:dispax}
\def\theequation{\thesection.\@arabic\c@equation}
\renewcommand{\theequation}{\arabic{section}.\arabic{equation}}
\newtheorem{thm}{Theorem}[section]
\newtheorem{lem}[thm]{Lemma}
\newtheorem{prop}[thm]{Proposition}
\theoremstyle{definition}
\newtheorem{rem}[thm]{Remark}
\newcommand{\R}{\mathbb{R}}
\newcommand{\p}{\partial}
\begin{document}
\title[Radial symmetry]
{Radial symmetry of positive entire solutions of a fourth order elliptic equation with
a singular nonlinearity}
\author{Zongming Guo}
\address{Department of Mathematics, Henan Normal University, Xinxiang, 453007, China}
\email{gzm@htu.cn}
\author{Long Wei}
\address{Department of Mathematics, Hangzhou Dianzi University,
 Xiasha, 310018, China}
\email{alongwei@163.com}
\author{Feng Zhou}
\address{Center for PDEs and Department of Mathematics, East China Normal University, Shanghai, 200241, China}
\email{fzhou@math.ecnu.edu.cn}

\thanks{$^*$ Research of the first author is supported by NSFC
(11171092, 11571093).
Research of the second author is supported by Zhejiang Provincial Natural Science Foundation of China under Grants No.
LY14A010029 and NSFC (11101111).
Research of the third author is supported by NSFC (11271133, 11431005) 
 and Shanghai Key Laboratory of PMMP}
 
\subjclass{Primary 35B45; Secondary 35J40} \keywords{Positive entire solutions, radial symmetry, bi-harmonic equations,
negative exponent, asymptotic behaviors}
\date{\today}
\def\baselinestretch{1}
\begin{abstract}
The necessary and sufficient conditions for a regular positive entire solution $u$ of the
biharmonic equation:
\begin{equation}
\label{0.1}
-\Delta^2 u=u^{-p} \;\; \mbox{in $\R^N \; (N \geq 3)$}, \;\; p>1
\end{equation}
to be a radially symmetric solution are obtained via the moving plane method (MPM) of a system of equations. It is well-known that for any $a>0$, \eqref{0.1} admits a unique
minimal positive entire radial solution ${\underline u}_a (r)$ and a family of non-minimal positive entire radial solutions $u_a (r)$ such that $u_a (0)={\underline u}_a (0)=a$ and
$u_a (r) \geq {\underline u}_a (r)$ for $r \in (0, \infty)$. Moreover, the asymptotic behaviors of ${\underline u}_a (r)$ and $u_a (r)$ at $r=\infty$ are also known.
We will see in this paper that the asymptotic behaviors similar to those of ${\underline u}_a (r)$ and $u_a (r)$ at $r=\infty$ can determine the radial symmetry of a general regular positive entire solution $u$ of \eqref{0.1}.
The precisely asymptotic behaviors of $u (x)$ and $-\Delta u (x)$ at $|x|=\infty$ need to be established
such that the moving-plane procedure can be started. We provide the necessary and sufficient conditions not only for a regular positive entire solution $u$ of \eqref{0.1} to be the minimal entire radial solution,
but also for $u$ to be a non-minimal entire radial solution.
\end{abstract}
\maketitle
\baselineskip 18pt

\section{Introduction}
\setcounter{equation}{0}

We consider radial symmetry of positive entire solutions of the equation
\begin{equation}
\label{1.1-0}
-\Delta^2 u=u^{-p} \;\;\; \mbox{in $\R^N$},
\end{equation}
where $N=3$, $1<p<3$ and $N \geq 4$, $p>1$. The necessary and sufficient conditions
for a positive entire solution of \eqref{1.1-0} to be a positive entire radially symmetric solution are established.

Equation \eqref{1.1-0} has been extensively studied in recent years, see, for example, \cite{Ch, CEGM, CX,DFG,Gue,GW1,GW2,LY, MW, MR, Mo} and the references therein. It arises in the study of the deflection of charged
plates in electrostatic actuators in the modeling of electrostatic
micro-electromechanical systems (MEMS) (see \cite{LY1, Pe} and the references therein).
It is known from \cite{DFG} that for $N=3$ and $1<p<3$;
$N \geq 4$ and $p>1$  \eqref{1.1-0} admits a singular
entire radial solution:
$$U_s (r)=L r^{\alpha}, \;\;\; r=|x|,$$
where and in the following,
\begin{equation}
\label{alpha-L}
\alpha=\frac{4}{p+1}, \;\;\;
L=\big[\alpha (2-\alpha)(N-2+\alpha)(N-4+\alpha)\big]^{-\frac{1}{p+1}}.
\end{equation}
Moreover,
for any $a>0$, there is a unique ${\tilde b}:=b(a)>0$ such that the problem
\begin{equation}
\label{1.2}
\left \{ \begin{array}{ll} -\Delta^2 u=u^{-p} \;\; \mbox{in $\R^N$}, \\
u(0)=a, \; u'(0)=0, \; \Delta u(0)=b, \; u''' (0)=0
\end{array} \right.
\end{equation}
has a unique positive radial solution $u_{a,{\tilde b}} (r)$ satisfying
\begin{equation}
\label{condition}
\lim_{r \to \infty} r^{-\alpha} u_{a, {\tilde b}} (r)=L.
\end{equation}
It is also known from \cite{DFG} that for any $b<{\tilde b}$, \eqref{1.2} does not admit an entire radial solution; for any
$b>{\tilde b}$, \eqref{1.2} admits a unique entire radial solution $u_{a, b} (r)$ which has
the growth rate $O(r^2)$ at $r=\infty$. Therefore we see that
the behaviors of the minimal and non-minimal entire solutions at $\infty$ are different. A comparison
principle (Lemma 3.2 in \cite{MR}) ensures that $u_{a, b}>u_{a, {\tilde b}}$ in $(0, \infty)$ for $b>{\tilde b}$. These imply that for any $a>0$, $u_{a,{\tilde b}}$ is the (unique) minimal
positive entire radial solution of \eqref{1.1-0} and $\{u_{a, b}\}_{b>{\tilde b}}$ are a family of entire non-minimal radial solutions of \eqref{1.1-0}. Meanwhile, the comparison principle also implies that for any $b_1>b_2>{\tilde b}$,
$u_{a, b_1}>u_{a, b_2}$ in $(0, \infty)$.
The stability of positive entire solutions of \eqref{1.1-0} has also been studied in \cite{GW2} and the references therein.

In this paper, we are interested in the relationship between the radial symmetry and the asymptotic behavior at $\infty$ of a positive entire solution of \eqref{1.1-0}. We will see that
if a positive regular entire solution $u$
of \eqref{1.1-0} admits the asymptotic behavior as that of the minimal entire radial solution of \eqref{1.1-0}, it is actually the minimal entire radial solution of \eqref{1.1-0} with respect to some $x_* \in \R^N$.
Meanwhile, if a positive regular entire solution $u$
of \eqref{1.1-0} admits the asymptotic behavior as that of a non-minimal entire radial solution of \eqref{1.1-0}, it is actually a non-minimal entire radial solution of \eqref{1.1-0} with respect to some $x_* \in \R^N$.

Our main results are the following theorems.

\begin{thm}
\label{main}
Let $u \in C^4 (\R^N)$ be a positive entire solution of \eqref{1.1-0} and
\begin{equation} \label{pN} p \in \left\{ \begin{array}{lll}
(1,\frac{N+2}{6-N}], &&\mbox{for $N=3$ or $5$,}\vspace{1mm}\\
(1,3] \cup (7,\infty),&\quad& \mbox{for $N=4$,}\vspace{1mm}\\
(1,\infty), &&\mbox{for $N\ge 6$.}\end{array}
\right.
\end{equation}
Then $u$ is the minimal radial entire solution of
\eqref{1.1-0} with the initial value $u(x_*)$ at some $x_* \in \R^N$ (i.e. $u(x)=u(r)$
with $r=|x-x_*|$) if and only if
\begin{equation}
\label{1.4}
\lim_{|x| \to \infty} \Big[ |x|^{-\alpha} u(x)-L \Big]=0.
\end{equation}
\end{thm}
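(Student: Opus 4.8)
The ``only if'' direction is immediate from \cite{DFG}: if $u(x)=\underline u_a(|x-x_*|)$ with $a=u(x_*)$, then \eqref{condition} is precisely \eqref{1.4}. So the whole content is the ``if'' direction, and the plan is to run the moving plane method on the second order cooperative system obtained from $v:=-\Delta u$,
\[
-\Delta u = v,\qquad -\Delta v = -u^{-p}\quad\text{in }\R^N,
\]
which is cooperative since $v\mapsto v$ and $u\mapsto -u^{-p}$ are both increasing. Two things must be done, in order: first extract from \eqref{1.4} precise information on $u$, $\nabla u$ and $-\Delta u$ at infinity (this is what makes the moving plane start possible), and then carry out the moving plane argument for the system in every direction.

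For the asymptotics I would use the scaling invariance of \eqref{1.1-0}: $u_R(y):=R^{-\alpha}u(Ry)$ solves the same equation, and \eqref{1.4} says $u_R\to L|y|^\alpha$ uniformly on the annulus $\{\tfrac12\le|y|\le2\}$; since $u_R$ is there bounded above and below, $u_R^{-p}$ is bounded and interior elliptic estimates promote this to $C^4_{\mathrm{loc}}(\R^N\setminus\{0\})$ convergence. Rescaling back yields
\[
u(x)=L|x|^\alpha\big(1+o(1)\big),\qquad \nabla u(x)=\nabla\!\big(L|x|^\alpha\big)\big(1+o(1)\big),\qquad -\Delta u(x)=-L\alpha(N+\alpha-2)|x|^{\alpha-2}\big(1+o(1)\big),
\]
and $\nabla(\Delta u)(x)=O(|x|^{\alpha-3})$, as $|x|\to\infty$. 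In particular $v=-\Delta u\to0^-$; and since $\Delta v=u^{-p}>0$ makes $v$ subharmonic on $\R^N$ with $\limsup_{|x|\to\infty}v\le0$, the maximum principle forces $v<0$ everywhere, i.e. $u$ is strictly subharmonic. However the leading order alone does not suffice to start the moving plane, because the $o(1)$'s prevent one from signing $u(x^\lambda)-u(x)$ near a far-away plane; so one needs finer asymptotics. Writing $u=U_s+w$ near infinity, $w$ solves a linear equation $-\Delta^2 w=-p\,\xi^{-p-1}w$ with $\xi\sim L|x|^\alpha$, whose potential is of Hardy type $\sim pL^{-p-1}|x|^{-4}$; the restriction \eqref{pN} on $p$ is exactly what makes the indicial exponents of this operator such that $w$ and $\nabla w$ are negligible, at infinity, against the radial difference $U_s(x^\lambda)-U_s(x)$. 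I expect this refined asymptotic analysis to be the bulk of the technical work.

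Now fix a unit vector $e$; put $T_\lambda=\{x\cdot e=\lambda\}$, $\Sigma_\lambda=\{x\cdot e<\lambda\}$, $x^\lambda$ the reflection of $x$ across $T_\lambda$, and $U_\lambda=u(x^\lambda)-u(x)$, $V_\lambda=v(x^\lambda)-v(x)$, so that in $\Sigma_\lambda$
\[
-\Delta U_\lambda = V_\lambda,\qquad -\Delta V_\lambda = b_\lambda(x)\,U_\lambda,\qquad U_\lambda=V_\lambda=0\ \text{on }T_\lambda,
\]
with $b_\lambda(x)=p\,\xi^{-p-1}>0$ by the mean value theorem and $b_\lambda(x)\sim pL^{-p-1}|x|^{-4}$ at infinity. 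The procedure starts at $\lambda=+\infty$: when $T_\lambda$ is far out, the refined asymptotics together with the facts that in the part of $\Sigma_\lambda$ near the origin $u(x^\lambda)$ is large (the reflected point being far out) while $u(x)$ is bounded, and $v(x^\lambda)\to0$ while $v(x)<0$, give $U_\lambda\ge0$ and $V_\lambda\ge0$ on $\partial\Sigma_\lambda$ and at infinity; and a maximum principle for the cooperative system on $\Sigma_\lambda$ — valid because one can build a positive supersolution pair of the form $\big(|x|^{-\gamma},\ \mu|x|^{-\gamma-2}\big)$ outside a large ball, the admissibility being an inequality between $pL^{-p-1}$ and $\gamma(\gamma+2)(N-2-\gamma)(N-4-\gamma)$ — then forces $U_\lambda,V_\lambda\ge0$ throughout $\Sigma_\lambda$. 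Decreasing $\lambda$, let $\lambda_*$ be the infimum of the $\lambda$'s for which this holds for all larger values. Then $\lambda_*>-\infty$ (otherwise $u$ would be monotone in the $e$-direction, contradicting $u\sim L|x|^\alpha$), and by continuity $U_{\lambda_*},V_{\lambda_*}\ge0$; the strong maximum principle yields the dichotomy: either $U_{\lambda_*}\equiv0$, so $u$ is symmetric about $T_{\lambda_*}$, or $U_{\lambda_*}>0$ in $\Sigma_{\lambda_*}$, in which case the Hopf lemma near $T_{\lambda_*}$ together with the controlled behavior at infinity allow the plane to be pushed past $\lambda_*$, contradicting its definition. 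Hence $u$ is symmetric about a hyperplane perpendicular to $e$. Applying this to $N$ independent directions, the symmetry hyperplanes meet at a single point $x_*$, so $u$ is radial about $x_*$; thus $u=u_{a,b}(|\cdot-x_*|)$ for some $b$ with $a=u(x_*)$, and since \eqref{1.4} forces the growth $o(|x|^2)$ whereas the non-minimal radial solutions grow like $|x|^2$, $u$ must be the minimal one $\underline u_a$.

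The main obstacle, then, is twofold: establishing the second order asymptotics of $u$ and $-\Delta u$ with enough control on the remainder $u-U_s$ to start and, especially, to continue the moving plane at infinity; and proving the maximum principle for the cooperative system $-\Delta U=V$, $-\Delta V=bU$ on half-spaces with the Hardy-type potential $b\sim c|x|^{-4}$. The admissible range \eqref{pN} of $p$ is precisely the range in which both of these succeed — the case distinction ($N=3,4,5$ versus $N\ge6$, with $N=4$ borderline) reflecting the sign conditions needed in the indicial computation and in the construction of the supersolution pair (e.g. $N-4-\gamma>0$ requires $N\ge5$). Everything else — the scaling invariance, the cooperative structure, the Hopf-lemma continuation, and the final reduction ``radial $+$ $o(|x|^2)$ growth $\Rightarrow$ minimal'' — should be routine once those two pieces are in place.
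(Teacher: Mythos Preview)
Your high-level strategy matches the paper's: refine the asymptotics of $u$ and $-\Delta u$ at infinity, then run the moving plane on the cooperative system $(-\Delta u=v,\ -\Delta v=-u^{-p})$. But the technical implementation differs substantially, and one of your proposed mechanisms is not what actually drives the result.

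\textbf{Asymptotics.} The paper does not work with the scaling $u_R(y)=R^{-\alpha}u(Ry)$ and a linearization $u=U_s+w$; instead it applies the Kelvin-type change $v(y)=|x|^{-\alpha}u(x)-L$, $y=x/|x|^2$, and decomposes $v(s,\theta)=\overline v(s)+w(s,\theta)$ into spherical average plus mean-zero part. Each Fourier coefficient $w_j^k(s)$ with respect to the spherical harmonic basis satisfies a fourth-order Euler ODE whose characteristic roots $\beta_1^{(k)},\dots,\beta_4^{(k)}$ are computed explicitly. The condition \eqref{pN} is \emph{not} about building a supersolution pair for a Hardy-type maximum principle; it is exactly the condition under which the critical root $\beta_3^{(1)}$ (for the $k=1$ mode) satisfies $\beta_3^{(1)}\le -1$ or $\beta_3^{(1)}>0$, so that $W(s)=\|w(s,\cdot)\|_{L^2(S^{N-1})}\le Cs$ (Proposition~\ref{p3.1}). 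This Lipschitz bound, combined with a separate ODE analysis of $\overline v$, yields the expansion
\[
u(x)=r^\alpha\Big[L+\varphi(r)+\frac{\psi(r,\theta)}{r}\Big],\qquad \psi(r,\theta)\to V(\theta)=\theta\cdot x_0
\]
(Theorem~\ref{t4.7}). The crucial point you do not isolate is that the next-order angular term is precisely a \emph{first} spherical harmonic $\theta\cdot x_0$; this vector $x_0$ is what locates the eventual center of symmetry.

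\textbf{Moving plane.} The paper does not use a supersolution pair $(|x|^{-\gamma},\mu|x|^{-\gamma-2})$ and a small-domain or narrow-domain maximum principle. Instead, the expansion above gives the exact limit
\[
\lim_{j\to\infty}\frac{|x^j|^{2-\alpha}}{\gamma^j-x_1^j}\big[u(x^j)-u((x^j)^\gamma)\big]=-2\alpha L\gamma-2(x_0)_1
\]
(Lemma~\ref{l5.1}), which both starts the process for $\gamma$ large and identifies the limiting plane $\gamma_0=-(x_0)_1/(\alpha L)$. The comparison of $v=-\Delta u$ on the two sides is handled directly: from the expansion $w(x)-w(x^\gamma)\to0$ at infinity, so the ordinary maximum principle on the half-space (Lemma~\ref{l5.3}, via Troy's lemma) applies without any Hardy-type supersolution.

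In short: your outline is sound, but your attribution of the range \eqref{pN} to a supersolution construction is off --- it comes from the root structure of the $k=1$ mode in the spherical-harmonic ODE --- and the paper's route through the explicit first-harmonic remainder $\theta\cdot x_0$ is what makes both the start and the continuation of the moving plane quantitative.
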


Our results for $p=7$ and $N=4$; $p \in (\frac{N+2}{6-N},p^*)$ and $N=3$, $4$ or $5$ are a little different, where we denote
\begin{equation}\label{p*}p^*:=\frac{N+3}{5-N}=\left\{ \begin{array}{lll}
3,&\quad& \mbox{for $N=3$,}\\
7, &&\mbox{for $N=4$,}\\
\infty, &&\mbox{for $N=5$.}\end{array}
\right.
\end{equation}

\begin{thm}
\label{main-1}
Let $p=7$ and $N =4$; $u \in C^4 (\R^4)$ be a positive entire solution of \eqref{1.1-0}. Then $u$ is the minimal radial entire solution of \eqref{1.1-0} with the initial value
$u(x_*)$ at some $x_* \in \R^4$ if and only if there exists $0<\epsilon_0<\frac{1}{10}$ such that
\begin{equation}
\label{1.6}
|x|^{-\alpha} u(x)-L=o \Big(|x|^{-\epsilon_0} \Big) \;\; \mbox{as $|x| \to \infty$}.
\end{equation}
\end{thm}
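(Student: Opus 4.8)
The plan is to follow the moving-plane scheme used to prove Theorem~\ref{main}, the single new ingredient being a version of the asymptotic analysis at infinity robust enough to survive the borderline exponent $p=p^{*}=7$; here $N=4$ and $\alpha=\tfrac{4}{p+1}=\tfrac12$, so $U_{s}(r)=Lr^{1/2}$ and $-\Delta U_{s}=-\tfrac{5}{4}L\,r^{-3/2}$. As is usual for a fourth-order problem I would first pass to the cooperative second-order system satisfied by the pair $(u,v)$ with $v:=-\Delta u$,
\begin{equation*}
-\Delta u=v,\qquad -\Delta v=-u^{-p}\qquad\text{in }\R^{4},
\end{equation*}
which is genuinely cooperative since $v\mapsto v$ and $u\mapsto-u^{-p}$ are both nondecreasing; on any half-space, after reflection, the difference system has the form $-\Delta U=V$, $-\Delta V=c(x)U$ with $c>0$, so the maximum principle and the Hopf boundary lemma are at hand.

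The heart of the matter is to upgrade the hypothesis \eqref{1.6} to a \emph{precise} expansion of the pair $(u,-\Delta u)$ at infinity: a genuine power rate together with enough information on the leading correction to locate the eventual centre, namely
\begin{equation*}
u(x)=L|x|^{1/2}+\psi\!\left(\tfrac{x}{|x|}\right)|x|^{-1/2}+o\!\left(|x|^{-1/2}\right),\qquad -\Delta u(x)=-\tfrac{5}{4}L|x|^{-3/2}+O\!\left(|x|^{-5/2}\right),
\end{equation*}
with $\psi$ a first-order spherical harmonic recording the displacement of the centre, and with the matching control of $\nabla u$ and $\nabla(\Delta u)$ needed to compare the reflected functions near the moving hyperplane and to handle the boundary terms. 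I would get these by a blow-down and linearisation argument: the rescalings $u_{R}(x)=R^{-1/2}u(Rx)$ converge to $U_{s}$, \eqref{1.6} quantifies the rate, and feeding this into the integral representations for $v=-\Delta u$ and then for $u-U_{s}$, together with the indicial analysis of the operator linearised at $U_{s}$, produces the expansion; after \eqref{1.6} rules out the growing modes, the slowest surviving mode is precisely the translation mode, of size $|x|^{-1/2}$. The borderline character of $p=p^{*}$ enters exactly here: the decay rate available for free when $p\neq p^{*}$ degenerates at $p=p^{*}$, so the bare limit \eqref{1.4} no longer controls the relevant mode with a rate, and $\epsilon_{0}<\tfrac1{10}$ is the range in which \eqref{1.6} still forces the bad modes out and yields the expansion. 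I expect carrying this borderline bootstrap through — and then checking that the gained decay is still strong enough to control the reflected functions at infinity in the moving-plane step — to be the main obstacle; the remainder is a routine adaptation of the proof of Theorem~\ref{main}.

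With the precise asymptotics in hand I would run the moving-plane method as in Theorem~\ref{main}. Fixing a direction $e_{1}$, for $\lambda\in\R$ set $\Sigma_{\lambda}=\{x_{1}>\lambda\}$, let $x^{\lambda}$ be the reflection of $x$ across $\{x_{1}=\lambda\}$, and put $U_{\lambda}=u(x^{\lambda})-u$, $V_{\lambda}=v(x^{\lambda})-v$ on $\Sigma_{\lambda}$; then $U_{\lambda}=V_{\lambda}=0$ on $\partial\Sigma_{\lambda}$, $-\Delta U_{\lambda}=V_{\lambda}$ and $-\Delta V_{\lambda}=c_{\lambda}(x)U_{\lambda}$ with $c_{\lambda}>0$. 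Because $u$ grows like $|x|^{1/2}$, for $\lambda\to-\infty$ one has $|x^{\lambda}|>|x|$ throughout $\Sigma_{\lambda}$, so the monotone leading profile forces $U_{\lambda}\ge0$ and, through the cooperative system, $V_{\lambda}\ge0$ there; this starts the procedure, and one slides $\lambda$ to the right. The precise expansion pins down the value of $\lambda$ at which the leading behaviour of $U_{\lambda}$ at infinity changes sign: as long as that leading behaviour is still strictly positive the planes can be pushed further, so the procedure stops exactly at a $\lambda_{1}$ where it degenerates, and there the infimum of $U_{\lambda_{1}}$ over $\Sigma_{\lambda_{1}}$ is not attained at infinity; the maximum principle for the cooperative pair together with the Hopf lemma then forces $u$ to be symmetric across $\{x_{1}=\lambda_{1}\}$. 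Since $e_{1}$ was arbitrary, $u$ is radially symmetric about a point $x_{*}\in\R^{4}$; and since its growth is $O(|x-x_{*}|^{1/2})$ rather than $O(|x-x_{*}|^{2})$, the uniqueness facts recalled in the Introduction identify $u$ with the minimal radial entire solution of \eqref{1.1-0} with initial value $u(x_{*})$ centred at $x_{*}$. The converse is immediate, since that minimal solution is asymptotic to $U_{s}$ with a power rate and hence satisfies \eqref{1.6} for every sufficiently small $\epsilon_{0}<\tfrac1{10}$.
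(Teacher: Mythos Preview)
Your strategy is essentially the paper's: pass to the cooperative system, extract a precise asymptotic expansion via the indicial analysis of the linearisation at $U_s$ (the paper does this through the Kelvin-type transform $v(y)=|x|^{-\alpha}u(x)-L$, $y=x/|x|^{2}$, and a spherical Fourier decomposition, which is the same computation as your blow-down), identify the first spherical harmonic as the translation mode, and then run the moving-plane method for the pair $(u,-\Delta u)$. You also correctly locate the borderline feature of $p=7$, $N=4$: the relevant indicial root is $\beta_3^{(1)}=5-N-2\alpha=0$, so the bare limit \eqref{1.4} gives no rate, and the $o(|x|^{-\epsilon_0})$ hypothesis is exactly what is needed to force integrability of the nonlinear remainder and recover $W(s)\le Cs$.

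One genuine inaccuracy to fix: the expansion you write,
\[
u(x)=L|x|^{1/2}+\psi\!\big(\tfrac{x}{|x|}\big)|x|^{-1/2}+o\big(|x|^{-1/2}\big),
\]
is not correct as stated. For $p=7$, $N=4$ the radial part $\overline v$ has $|\overline v(s)|=O(s^{-\ell})=O(s^{1/2})$ (the radial indicial roots are $\ell\pm qi$ with $\ell=2-\alpha-\tfrac N2=-\tfrac12$), so $u(x)=r^{1/2}[L+\varphi(r)+r^{-1}\psi(r,\theta)]$ with $\varphi(r)=O(r^{-1/2})$; hence there is a radial correction of size $O(1)$ sitting strictly between the leading $L|x|^{1/2}$ and your $|x|^{-1/2}$ term. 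Likewise your remainder for $-\Delta u$ should be $O(|x|^{-2})$, not $O(|x|^{-5/2})$. This does not break the moving-plane argument, because in the reflected difference the radial piece contributes through its derivative and one computes, as in the paper's Lemma~\ref{l5.1},
\[
\frac{|x^{j}|^{2-\alpha}}{\gamma-x_1^{j}}\Big[|x^{j}|^{\alpha}\varphi(|x^{j}|)-|(x^{j})^{\gamma}|^{\alpha}\varphi(|(x^{j})^{\gamma}|)\Big]=O(|x^{j}|^{\ell})\to 0,
\]
so only the $L|x|^{1/2}$ and the first-harmonic pieces survive in the limit that locates the centre. But you should state the expansion correctly and then explain why the extra radial term is harmless; otherwise the claimed asymptotics are false and a reader will balk. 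With that correction, your proof and the paper's coincide.
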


\begin{thm}
\label{main-2}
Let $p \in (\frac{N+2}{6-N}, p^*)$ and $N=3$, $4$ or $5$; $u \in C^4 (\R^N)$ be a positive entire solution of \eqref{1.1-0}. Then $u$ is the minimal radial entire solution of \eqref{1.1-0} with the initial value
$u(x_*)$ at some $x_* \in \R^N$ if and only if
\begin{equation}
\label{1.6-1}
|x|^{-\alpha} u(x)-L=o \Big(|x|^{5-N-2\alpha} \Big) \;\; \mbox{as $|x| \to \infty$}.
\end{equation}
\end{thm}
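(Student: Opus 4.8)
\emph{Necessity.} For the ``only if'' direction the plan is to read \eqref{1.6-1} off from the known expansion of the minimal entire radial solution. Writing $\underline u_a(r)=Lr^\alpha\bigl(1+\psi(r)\bigr)$ and linearising \eqref{1.1-0} about the singular profile $U_s=Lr^\alpha$, the leading part of the equation for $\psi$ is the fourth-order Euler equation $\Delta^2 w-pU_s^{-p-1}w=0$, whose indicial (characteristic) exponents are the roots of a quartic depending on $p$ and $N$. By \cite{DFG} the minimal solution is the one whose perturbation $\psi$ decays at the rate of the appropriate such root, and a direct computation shows that for $p\in(\frac{N+2}{6-N},p^*)$ this rate is $o(r^{5-N-2\alpha})$, i.e.\ \eqref{1.6-1}. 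It is exactly at the endpoints $p=\frac{N+2}{6-N}$ and $p=p^*$ that the relevant characteristic exponents cross the threshold which makes \eqref{1.6-1}, rather than the weaker \eqref{1.4}, the sharp hypothesis; this is why this $p$-range is not covered by Theorems~\ref{main} and~\ref{main-1}.

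\emph{Sufficiency: reduction to radial symmetry.} Assume \eqref{1.6-1}. The plan is to show that $u$ must be radially symmetric about some $x_*\in\R^N$. Granted this, $u(\cdot+x_*)$ is a positive entire radial solution of \eqref{1.1-0}, so by the classification in \cite{DFG} it is either the minimal solution $\underline u_a$ with $a=u(x_*)$, or a non-minimal solution $u_{a,b}$ with $b>\tilde b$. In the latter case $u_{a,b}(r)=O(r^2)$, hence $r^{-\alpha}u_{a,b}(r)-L\sim c\,r^{2-\alpha}$ with $c\ne0$; since $2-\alpha>5-N-2\alpha$ for all $N\ge3$, this contradicts \eqref{1.6-1}. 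Thus $u(x)=\underline u_{u(x_*)}(|x-x_*|)$, as claimed.

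\emph{The cooperative system and the sharp asymptotics.} Set $v:=-\Delta u$, so that $-\Delta u=v$ and $-\Delta v=-u^{-p}$ in $\R^N$. The blow-down $u_R(x):=R^{-\alpha}u(Rx)$, together with \eqref{1.6-1} and interior elliptic estimates for this system, gives $u_R\to U_s$ and $R^{2-\alpha}v(R\cdot)\to-\Delta U_s$ in $C^4_{\mathrm{loc}}(\R^N\setminus\{0\})$; in particular $v<0$ everywhere, since $v$ is subharmonic ($\Delta v=u^{-p}>0$) and tends to $0$ at infinity. For the reflection $x\mapsto x^\lambda$ across a hyperplane $T_\lambda$, the differences $\xi_\lambda=u_\lambda-u$ and $\eta_\lambda=v_\lambda-v$ solve the cooperative linear system
\[
-\Delta\xi_\lambda=\eta_\lambda,\qquad -\Delta\eta_\lambda=c_\lambda(x)\,\xi_\lambda,\qquad c_\lambda(x)=p\,\theta_\lambda(x)^{-p-1}>0,
\]
with $\theta_\lambda$ between $u_\lambda$ and $u$. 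The crucial, and most delicate, preliminary step is to upgrade the one-sided rate \eqref{1.6-1} to precise two-sided pointwise asymptotics, as $|x|\to\infty$, of $u(x)-L|x|^\alpha$, $\nabla u$, $v(x)+K|x|^{\alpha-2}$ (with $K:=L\alpha(N-2+\alpha)$) and $\nabla v$, with errors of the right homogeneity relative to $|x|^{5-N-2\alpha}$. I would obtain this by iterating the rescaling, by representation formulas for the Newtonian and biharmonic potentials applied to the equation, and by the ODE analysis of the linearised operator at $U_s$ (which also shows that \eqref{1.6-1} excludes a $|x|^2$-growth term, in line with the exclusion of $u_{a,b}$ above); it is convenient to carry this out, and the moving plane below, after a Kelvin transform $\hat u(y)=|y|^{4-N}u(y/|y|^2)$, which converts the behaviour at infinity into a prescribed isolated singularity at $y=0$ controlled exactly by \eqref{1.6-1}.

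\emph{Moving planes and the main obstacle.} For each unit vector $e$, with $\Sigma_\lambda=\{x\cdot e>\lambda\}$, the plan is the three classical steps: (i) for $\lambda$ at the relevant extreme, $(\xi_\lambda,\eta_\lambda)$ has throughout $\Sigma_\lambda$ the sign dictated by the leading profile $L|x|^\alpha$; (ii) the set of such $\lambda$ is open and closed, by narrow-region and Hopf-type maximum principles for cooperative systems on the unbounded domain $\Sigma_\lambda$, which are admissible here because the sharp asymptotics of $u$ and $v$ supply a barrier/supersolution for the limiting operator; (iii) at the critical value $\lambda_0$ one has $\xi_{\lambda_0}\equiv0$, i.e.\ $u$ is symmetric about $T_{\lambda_0}$; doing this for the coordinate axes and their diagonals yields radial symmetry about a point $x_*$. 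The main obstacle is steps (i)--(ii): because $u$ grows like $|x|^\alpha$, the comparison of $u_\lambda$ with $u$ near infinity carries a leading ``gap'' of size $\sim|\lambda|\,|x\cdot e-\lambda|\,|x|^{\alpha-2}$ which, in the $p$-range of Theorem~\ref{main-2}, is \emph{not} pointwise dominated by the error $o(|x|^{5-N-\alpha})$ permitted by \eqref{1.6-1}; to get past this one must use the refined pointwise expansion of $u$ and $-\Delta u$ established above, which pins down $\xi_\lambda$ and $\eta_\lambda$ near infinity to one order beyond what \eqref{1.6-1} gives directly---and this is precisely why the weaker decay \eqref{1.4} suffices for Theorem~\ref{main} but not here, and is the source of the threshold exponent $5-N-2\alpha$ in the statement.
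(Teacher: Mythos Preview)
Your proposal is correct and follows essentially the same route as the paper: for necessity, analyse the indicial exponents of the radial ODE linearised at $U_s=Lr^\alpha$ (the paper's Lemma~\ref{lm5.4}); for sufficiency, pass to a Kelvin-type transform, upgrade \eqref{1.6-1} to a precise pointwise expansion of $(u,-\Delta u)$ near infinity via ODE analysis of the linearised operator (the paper's Sections~3--4, culminating in Theorem~\ref{t4.7}), and then run the moving-plane method on the cooperative system \eqref{1.7} (the paper's Lemmas~\ref{l5.1}--\ref{l5.3}). Two implementation details differ slightly: the paper uses the adapted transform $v(y)=|x|^{-\alpha}u(x)-L$, $y=x/|x|^2$, rather than the standard biharmonic Kelvin map $|y|^{4-N}u(y/|y|^2)$ you mention, and obtains the refined asymptotics not by blow-down/potential representations but by an explicit Fourier decomposition on $S^{N-1}$ and a detailed computation of the characteristic roots $\beta_i^{(k)}$ of the resulting ODEs---in particular, the threshold $5-N-2\alpha$ is exactly $\beta_3^{(1)}$, and \eqref{1.6-1} is what forces the $k=1$ mode down to the next root $\beta_4^{(1)}=-1$, yielding $|\tilde w|=|w|/s=O(1)$ and hence the expansion $u(x)=r^\alpha[L+\varphi(r)+r^{-1}\psi(r,\theta)]$ with $\psi(r,\theta)\to V(\theta)=\theta\cdot x_0$.
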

Note that $5-N-2\alpha\in (-1,0)$ when $N =3$, $4$ or $5$ and $p \in (\frac{N+2}{6-N}, p^*)$. \medskip

The following theorem provides the necessary and sufficient conditions for a positive entire solution of \eqref{1.1-0} to be a non-minimal positive radial entire solution of \eqref{1.1-0}.

\begin{thm}
\label{main-3}
Let $u \in C^4 (\R^N)$ be a positive entire solution of \eqref{1.1-0} with $N=3$ and $1<p<3$; $N \geq 4$ and $p>1$. Then $u$ is an entire radial solution about some $x_* \in \R^N$, but is not the minimal positive entire radial solution about $x_*$ of \eqref{1.1-0}, if and only if there exists $D>0$ such that
\begin{equation}
\label{condition-nonminimal}
\lim_{|x| \to \infty} \Big[ |x|^{-2} u(x)-D \Big]=0.
\end{equation}
The constant $D$ then determines a particular non-minimal positive entire radial solution.
\end{thm}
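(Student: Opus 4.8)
The plan is to run the moving plane method (MPM) on the cooperative second order system obtained from \eqref{1.1-0} by setting $v=-\Delta u$, namely
\[
-\Delta u=v,\qquad -\Delta v=-u^{-p}\quad\text{in }\R^N,
\]
in the same spirit as the proofs of Theorems~\ref{main}--\ref{main-2}; the essential difference is that in the non-minimal regime $u$ grows like $|x|^2$ and $v=-\Delta u$ tends to the \emph{nonzero} constant $-2ND$ at infinity, so both the asymptotic analysis that starts the procedure and the Liouville-type conclusion that ends it are different, the latter now singling out a non-minimal, rather than the minimal, radial solution.

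\emph{Necessity.} Let $u(x)=u_{a,b}(|x-x_*|)$ with $a=u(x_*)$ and $b>\tilde b$. By \cite{DFG} this solution has growth $O(r^2)$. To upgrade this to a limit, put $w=\Delta u$ and integrate the radial identity $(r^{N-1}w')'=-r^{N-1}u^{-p}$ twice; since $u\ge u_{a,\tilde b}\sim Lr^{\alpha}$ with $\alpha<2$, the source is integrable against the radial kernels, so $w(r)$ tends to a finite limit $w_\infty$ and hence $r^{-2}u_{a,b}(r)\to D:=w_\infty/(2N)$. The comparison principle recalled in the introduction makes $b\mapsto D=D(a,b)$ a strictly increasing bijection of $(\tilde b,\infty)$ onto $(0,\infty)$; this gives $D>0$, shows that $a$ and $D$ determine $b$ and hence the solution, and proves \eqref{condition-nonminimal}.

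\emph{Sufficiency: asymptotics.} Assume \eqref{condition-nonminimal}. A blow-down argument --- rescale $u_R(y)=R^{-2}u(Ry)$ on the annulus $\{1/2\le|y|\le2\}$, observe that $u_R\to D|y|^2$ uniformly while $\Delta^2u_R(y)=-R^2u(Ry)^{-p}=O(R^{2-2p})\to0$, and apply interior biharmonic estimates --- gives $-\Delta u(x)\to-2ND$ as $|x|\to\infty$. Writing $\psi=u-D|x|^2$ and representing $\psi$ (equivalently $v=-\Delta u$) through Newtonian/biharmonic potentials of the source $-u^{-p}=O(|x|^{-2p})$, one then bootstraps, using that the homogeneous biharmonic functions of subquadratic growth are exhausted by the constants, the linear functions, and the $|x|^{4-N}$-type profiles, to an expansion
\[
u(x)=D|x-x_*|^2+o(|x|),\qquad -\Delta u(x)=-2ND+o(1)\qquad(|x|\to\infty),
\]
with $x_*$ uniquely determined by the linear term and with explicit error rates; when $1<p<3/2$ one must additionally carry the term of order $|x|^{4-2p}$ created by the leading part $D^{-p}|x-x_*|^{-2p}$ of the nonlinearity, which is radially symmetric about $x_*$ and hence cancels upon reflection in any hyperplane through $x_*$.

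\emph{Sufficiency: moving planes and conclusion.} Translate so that $x_*=0$. For each unit vector $e$ and each $\lambda$, compare $(u,v)$ with its reflection $(u^\lambda,v^\lambda)$ in $\{x\cdot e=\lambda\}$; with $U=u^\lambda-u$ and $V=v^\lambda-v$ one obtains the cooperative linear system $-\Delta U=V$, $-\Delta V=c_\lambda(x)U$ with $c_\lambda>0$ (mean value theorem applied to $t\mapsto -t^{-p}$), to which the maximum principle for cooperative systems together with the Hopf lemma applies. Because the quadratic leading term cancels in $U$ and the constant leading term cancels in $V$, the asymptotics above control $(U,V)$ near infinity well enough to start the sliding from an extreme value of $\lambda$ and bring it down to a critical value $\lambda_0(e)$ at which $u\equiv u^{\lambda_0}$; intersecting these hyperplanes over all $e$ produces a point of radial symmetry, necessarily $x_*$. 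By the ODE classification of \cite{DFG}, an entire radial solution is either the minimal $u_{a,\tilde b}$ (for which $r^{-2}u\to0$) or a non-minimal $u_{a,b}$ with $b>\tilde b$; since $r^{-2}u(r)\to D>0$ it is the latter, with $a=u(x_*)$ and the unique $b$ for which $D(a,b)=D$, so $a$ and $D$ indeed determine it. I expect the main obstacle to be exactly the asymptotic step: because $u$ grows like $|x|^2$ the Newtonian-potential representation is not immediate and $U$ does not vanish at infinity, so the subquadratic biharmonic corrections have to be sorted out in order to isolate the linear term that fixes $x_*$, and --- very likely after passing to the biharmonic Kelvin transform $\bar u(x)=|x|^{4-N}u(x/|x|^2)$, which carries the Newtonian-type singularity $D|x|^{2-N}$ at the origin, together with the companion transform of $v$ --- everything must be made quantitative enough, uniformly in $N$ and $p$ (with no restriction of the kind present in Theorems~\ref{main}--\ref{main-2}), for the MPM on the system to be started in all the stated ranges.
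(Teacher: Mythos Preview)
Your overall architecture is right and matches the paper: write the equation as the cooperative system $-\Delta u=v$, $-\Delta v=-u^{-p}$, establish a quantitative expansion of $(u,v)$ at infinity sharp enough to start the moving planes, and then run MPM exactly as for Theorems~\ref{main}--\ref{main-2}. Your necessity argument (integrate the radial ODE, use monotonicity of $\Delta u$ and the known $O(r^2)$ growth to force a positive limit, and invoke comparison for the bijection $b\mapsto D$) is essentially the paper's Proposition~\ref{6-p1.1} and the remark following it.

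Where you and the paper differ is precisely on the step you flag as the obstacle. You propose blow-down plus Newtonian/biharmonic potential representations to isolate the linear term fixing $x_*$; the paper instead uses the tailored Kelvin-type change of variables
\[
v(y)=|x|^{-2}u(x)-D,\qquad y=x/|x|^{2}
\]
(\emph{not} the conformal biharmonic transform $|x|^{4-N}u(x/|x|^{2})$ you mention), reducing everything to a local analysis of a fourth-order equation for $v$ near $y=0$. It then splits $v=\overline v+w$, expands $w$ in spherical harmonics, and passes to $t=-\ln s$. The decisive point is that with growth exponent $2$ the characteristic roots for the $i$th spherical mode are the explicit integers
\[
\nu^{(i)}\in\{\,i,\ 2-N-i,\ i-2,\ -N-i\,\},
\]
so \emph{no} case analysis in $(N,p)$ is required --- this is exactly why Theorem~\ref{main-3} carries no restriction like \eqref{pN}. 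From this one reads off $W(s)\le Cs$ and $\tilde w(s,\theta):=s^{-1}w(s,\theta)\to V(\theta)=\theta\cdot x_0$, i.e.\ the linear term you are after. The resulting expansion is
\[
u(x)=r^{2}\Bigl[D+\xi(r)+r^{-1}\eta(r,\theta)\Bigr],\qquad \eta(r,\cdot)\to V,\qquad |\xi(r)|\le M r^{-\min\{1,\,2(p-1)\}}
\]
(with an $\epsilon$-loss at $p=3/2$), and the companion expansion for $-\Delta u$; this feeds directly into the analogue of Lemma~\ref{l5.1} that starts the planes. Your potential-theoretic route should in principle recover the same expansion, but the Kelvin-plus-spectral machinery is what makes your ``sort out the subquadratic biharmonic corrections'' step concrete and uniform in $(N,p)$; if you pursue your approach, the substitute for the integer characteristic roots is that harmonic extensions of degree-$i$ spherical harmonics contribute homogeneities $i$ and $2-N-i$ under $-\Delta$, and composing with the second Poisson equation reproduces the same four integers.
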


Theorems \ref{main}-\ref{main-3} show that the asymptotic behavior given in \eqref{1.4}, \eqref{1.6}, \eqref{1.6-1} or \eqref{condition-nonminimal} near $\infty$ of a positive entire solution $u$ of \eqref{1.1-0} determines its radial symmetry with respect to some $x_* \in \R^N$, which seem to be the first such kinds of results for problem \eqref{1.1-0}.

Let us comment on some related results. The semilinear equations
$$-\Delta u=u^p \;\; \mbox{in $\R^N \; (N \geq 3)$}, \;\; p>\frac{N+2}{N-2} \leqno(P)$$
and
$$ \Delta u=u^{-p} \;\; \mbox{in $\R^N \; (N \geq 2)$}, \;\; p>0 \leqno(Q)$$
have been studied in the past few decades.
Some sufficient conditions for a regular positive entire solution of (P) and (Q)
to be an entire radial solution are given in \cite{Zou} for (P) provided $p \in (\frac{N+2}{N-2}, \frac{N+1}{N-3})$
and in \cite{GW3} for (Q) provided $p>0$ respectively.
The results in \cite{Zou} were generalized to $p \geq \frac{N}{N-4}$ for $N \geq 5$ in \cite{Guo}.
Recently, the necessary and sufficient conditions for an entire solution
$u$ of the equation
$$\Delta^2 u=8(N-2)(N-4) e^u \;\;\mbox{in $\R^N \;(N \geq 5)$} \leqno(P_1)$$
to be the entire radial solution of $(P_1)$ with the initial value at some $x_* \in \R^N$ are provided in \cite{GHZ}.
Note that \eqref{1.1-0} can be written to the following system of equations:
\begin{equation}
\label{1.7}
\left \{\begin{array}{ll} -\Delta u=w \;\;\; &\mbox{in $\R^N$}, \\
-\Delta w=-u^{-p} \;\;\; &\mbox{in $\R^N$}.
\end{array}
\right.
\end{equation}
As in  \cite{GHZ}, we use the moving plane method for a system of equations to obtain our
results, but we need to do more delicate estimates for the solution $u$ and $\Delta u$ near $\infty$, since (Q) has a more
complicated structure of solutions than (P1). We discuss not only the minimal solution but also the non-minimal solutions in this paper. Such estimates we need to do are more complicated since they rely on two parameters $p$ and $N$.
Moreover, for the non-minimal entire radial solution case, the asymptotic behavior \eqref{condition-nonminimal} is not enough to make the moving-plane procedure works, we need to obtain more detailed information of the asymptotic behavior of $u$ based on \eqref{condition-nonminimal}.
To know more information of the positive entire solutions with asymptotic behavior \eqref{1.4} near $\infty$, we use a Kelvin type transformation:
\begin{equation}
\label{1.8}
v(y)=|x|^{-\alpha} u(x)-L, \;\;\; y=\frac{x}{|x|^2}
\end{equation}
and make a fundamental estimate for
\begin{equation}
\label{1.9}
W(s):=\Big(\int_{S^{N-1}} w^2 (s, \theta) d \theta \Big)^{\frac{1}{2}},
\end{equation}
where $s=|y|=\frac{1}{r}$, $r=|x|$, $w(s, \theta):=v (s, \theta)-{\overline v} (s)$ and
$${\overline v}(s)=\frac{1}{\omega_{N-1}} \int_{S^{N-1}} v(s, \theta) d \theta, \;\;\; \omega_{N-1}=|S^{N-1}|.$$
The key point is to show that $W(s)$ is Lipschitz continuous, or H\"older continuous in some case, in a neighborhood of $s=0$.

In Sections 2--5, we deal with positive entire solutions $u$ of \eqref{1.1-0} with the asymptotic behavior \eqref{1.4}. In the last section, we deal with positive entire solutions $u$ of \eqref{1.1-0} with \eqref{condition-nonminimal}. In Section 2, we first introduce some preliminary results about the eigenvalues and eigenfunctions of $\Delta^2_{S^{N-1}}$. Then, using the Kelvin-type transformation given in \eqref{1.8} we obtain
the information of $v(y)$ near $y=0$. In Section 3, we derive an important estimate for $W(s)$ (given in \eqref{1.9}) near $s=0$. In Section 4, some estimates for ${\overline v}(s)$ and $v(s, \theta)$ near $s=0$ are obtained. We present the proofs of Theorems \ref{main}, \ref{main-1} and \ref{main-2} in Section 5. Finally, we prove Theorem \ref{main-3} in Section 6. In this paper, we use $C$ to denote a positive constant which may change line by line.

\vskip1cm
\section{Preliminaries}
\setcounter{equation}{0}

In this section, we present some results which will be useful in the following proofs. We use the spherical coordinates $x=(r, \theta)$
as usual. First, let us to show the following lemma (see Lemma 2.1 in \cite{GHZ}).

\begin{lem}
\label{l2.1}
Let $(\lambda, Q(\theta))$ be a pair of eigenvalue and eigenfunction of the equation
\begin{equation}
\label{2.1}
-\Delta_{S^{N-1}} Q=\lambda Q.
\end{equation}
Then $(\lambda^2,Q (\theta))$ is a pair of eigenvalue and eigenfunction of the equation
\begin{equation}
\label{2.2}
\Delta^2_{S^{N-1}} Q=\sigma Q.
\end{equation}
Conversely, if $(\sigma, Q(\theta))$ is a pair of eigenvalue and eigenfunction of \eqref{2.2} with $\sigma \neq 0$, then
$\sigma>0$ and $(\sigma^{1/2}, Q(\theta))$ is a pair of eigenvalue and eigenfunction of \eqref{2.1}.
\end{lem}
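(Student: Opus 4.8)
The plan is to prove the two implications separately; the first is a one-line computation and the second rests on the standard spectral properties of the Laplace--Beltrami operator on the closed manifold $S^{N-1}$. For the forward implication, suppose $-\Delta_{S^{N-1}} Q = \lambda Q$. Applying $\Delta_{S^{N-1}}$ to both sides and using linearity,
\[
\Delta^2_{S^{N-1}} Q = \Delta_{S^{N-1}}\big(\Delta_{S^{N-1}} Q\big) = \Delta_{S^{N-1}}(-\lambda Q) = -\lambda\, \Delta_{S^{N-1}} Q = \lambda^2 Q ,
\]
so $(\lambda^2, Q)$ solves \eqref{2.2} with $\sigma=\lambda^2$. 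There is nothing delicate here.

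For the converse, let $(\sigma, Q)$ with $Q\not\equiv 0$ solve \eqref{2.2} and $\sigma\neq 0$. I would first establish $\sigma>0$: multiplying \eqref{2.2} by $Q$, integrating over $S^{N-1}$, and integrating by parts twice (there is no boundary term) gives
\[
\int_{S^{N-1}} \big|\Delta_{S^{N-1}} Q\big|^2 \, d\theta = \sigma \int_{S^{N-1}} Q^2 \, d\theta ,
\]
and since the right-hand side is strictly positive and the left-hand side non-negative, $\sigma>0$. To recover the second-order equation I would factor the fourth-order operator: set $P:=\Delta_{S^{N-1}} Q + \sqrt{\sigma}\,Q$, so that
\[
\Delta_{S^{N-1}} P - \sqrt{\sigma}\,P = \Delta^2_{S^{N-1}} Q - \sigma Q = 0 ,
\]
i.e. $-\Delta_{S^{N-1}} P = -\sqrt{\sigma}\,P$. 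Testing this against $P$ and integrating by parts yields $\int_{S^{N-1}} |\nabla_{S^{N-1}} P|^2\,d\theta = -\sqrt{\sigma}\int_{S^{N-1}} P^2\,d\theta$; since $\sqrt{\sigma}>0$ the right side is $\le 0$ while the left is $\ge 0$, so $P\equiv 0$. Hence $-\Delta_{S^{N-1}} Q = \sqrt{\sigma}\,Q$, i.e. $(\sigma^{1/2},Q)$ solves \eqref{2.1}.

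I do not expect a genuine obstacle here: the argument only uses that $\Delta_{S^{N-1}}$ is self-adjoint with no boundary contributions and that $-\Delta_{S^{N-1}}\ge 0$, both standard. The sole points to watch are keeping the sign conventions straight and noting that the hypothesis implicitly includes $Q\not\equiv 0$, without which ``eigenfunction'' is vacuous. As an alternative to the factorization one may expand $Q=\sum_k Q_k$ in spherical harmonics with $-\Delta_{S^{N-1}} Q_k=\lambda_k Q_k$; then $\sum_k(\lambda_k^2-\sigma)Q_k=0$ forces $\lambda_k^2=\sigma$ whenever $Q_k\neq 0$, which again gives $\sigma=\lambda_k^2>0$ and $-\Delta_{S^{N-1}} Q=\sqrt{\sigma}\,Q$. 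The lemma's purpose is simply to let the later sections parametrize everything by the familiar eigenvalues $\lambda_k=k(k+N-2)$ of $-\Delta_{S^{N-1}}$ rather than by fourth-order ones.
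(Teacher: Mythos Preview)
Your proof is correct. The paper itself does not actually prove this lemma; it merely states it with a parenthetical reference to Lemma~2.1 of \cite{GHZ} and then moves on to quote the eigenvalues $\lambda_k=k(N+k-2)$. So your argument already supplies more than the paper does: both the forward computation and the factorization $(\Delta_{S^{N-1}}-\sqrt{\sigma})(\Delta_{S^{N-1}}+\sqrt{\sigma})Q=0$ followed by the sign argument ruling out the factor with eigenvalue $-\sqrt{\sigma}$ are clean and complete, and your spherical-harmonic alternative is the other standard route.
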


It is known from \cite{CH} that for $N \geq 3$, the eigenvalues of the equation \eqref{2.1} are given by
\begin{equation}
\label{2.3}
\lambda_k=k (N+k-2), \;\;\; k \geq 0, \;\; k \in \mathbb{N}
\end{equation}
with multiplicity
\begin{equation}
\label{2.4}
m_k=\frac{(N-2+2k)(N-3+k)!}{k!(N-2)!}.
\end{equation}
Then Lemma \ref{l2.1} implies that the eigenvalues of the equation \eqref{2.2} are $\lambda_k^2$ with the same multiplicity. In particular,
we have
$$ \begin{array}{l} \lambda_0=0, \;\; m_0=1, \;\; Q_1^0 (\theta) \equiv \frac{1}{\sqrt{|S^{N-1}|}},\vspace{2mm}\\
\lambda_1=N-1, \;\; m_1=N, \;\; Q_j^1 (\theta)=\frac{x_j|_{S^{N-1}}}{\|x_j|_{S^{N-1}}\|_{L^2}}, \;\; 1 \leq j \leq N \; (:=m_1),\vspace{2mm}\\
\lambda_2=2N.
\end{array}
$$
Therefore, if $w \in H^2 (S^{N-1})$
is orthogonal to $Q_1^0$, i.e. ${\overline w}=0$, we have
$$\int_{S^{N-1}} |\nabla_\theta w|^2 d \theta \geq (N-1) \int_{S^{N-1}} w^2 d \theta,$$
and
$$\int_{S^{N-1}} |\Delta_\theta w|^2 d \theta \geq (N-1)^2 \int_{S^{N-1}} w^2 d \theta.$$
The boot-strap argument implies that for $1 \leq j \leq m_k$,
\begin{equation}
\label{2.4-1}
\max_{S^{N-1}} |Q_j^k (\theta)| \leq D_k , \;\;\; \max_{S^{N-1}} |(Q_j^k)_\theta (\theta)| \leq E_k,
\end{equation}
where
\begin{equation}
\label{2.4-2}
D_k:=C (1+\lambda_k+\lambda_k^2+\ldots+\lambda_k^\tau), \;\;\; E_k:=C (1+\lambda_k+\lambda_k^2+\ldots+\lambda_k^{\tau_1})
\end{equation}
with $C>0$ being independent of $k$ and $\tau \geq 1$, $\tau_1 \geq 1$ being positive integers such that $2 \tau>N-1$, $2 \tau_1>N$.

In Sections 2-5, we assume that $u \in C^4 (\R^N)$ is a positive entire solution of \eqref{1.1-0} with \eqref{1.4}.
Introducing the Kelvin-type transformation:
\begin{equation}
\label{2.5}
v(y)=|x|^{-\alpha} u(x)-L, \;\;\; y=\frac{x}{|x|^2}, \;\; r=|x|>0,
\end{equation}
we see that $u(x)=u(r, \theta)$, $v(y)=v(s, \theta)$ with $s=|y|=r^{-1}$ and
\begin{eqnarray*}
\Delta_x^2 u &=& \Big[\partial_r^{4}+2(N-1)r^{-1} \partial_r^{3}+(N-1)(N-3) r^{-2} \partial_r^{2}-(N-1)(N-3) r^{-3} \partial_r\\
& &+(8-2N)r^{-4} \Delta_\theta+(2N-6) r^{-3} \Delta_\theta \partial_r+2r^{-2}\Delta_\theta \partial_r^{2}+r^{-4} \Delta_\theta^2\Big] u,
\end{eqnarray*}
with the notations $\partial_r=\frac{\partial}{\partial r}$ and $\partial_r^{m}=\frac{\partial^m}{\partial r^m}$ for $2 \leq m \leq 4$.
Direct calculations imply that
\begin{eqnarray*}
& & \partial_s^4 v-2(N-7+2\alpha) s^{-1}\partial_s^3 v+(N^2+6\alpha N+6\alpha^2-16N-36\alpha+51) s^{-2} \partial_s^2 v \\
& & \;\;\;\;\;-(N-5+2\alpha)(2N\alpha+2\alpha^2-3N-10\alpha+9) s^{-3} \partial_s v\\
& &\;\;\;\;\; + \alpha(\alpha-2)(N+\alpha-2)(N+\alpha-4)s^{-4}(v+L)-2(N-5+2\alpha)s^{-3}\Delta_{\theta}(\partial_s v) \\
& & \;\;\;\;\;+2(N\alpha+\alpha^2-N-4\alpha+4)s^{-4} \Delta_{\theta} v + 2s^{-2} \Delta(\partial_s^2 v)+s^{-4} \Delta^2_{\theta} v\\
& &\;\;=-r^{8-\alpha} \Delta^2_{x}u\,\,= -r^{8-\alpha}u^{-p}\,\,=\,-s^{-8+\alpha(p+1)}(v+L)^{-p}.
\end{eqnarray*}
Since $L=[\alpha (2-\alpha)(N-2+\alpha)(N-4+\alpha)]^{-\frac{1}{p+1}}$ and $\alpha=\frac{4}{p+1}$, we have
\begin{equation}
\label{2.6}
\begin{array}{l} \partial_s^4 v-2(N-7+2\alpha)s^{-1} \partial_s^3 v+(N^2+6\alpha N+6\alpha^2-16N-36\alpha+51) s^{-2} \partial_s^2 v \vspace{1mm}\\
\;\;\;\;-(N-5+2\alpha)(2N\alpha+2\alpha^2-3N-10\alpha+9) s^{-3} \partial_s v   \vspace{1mm}\\
\;\;\;\;-2(N-5+2\alpha) s^{-3} \Delta_{\theta} (\partial_s v)+2(N\alpha+\alpha^2-N-4\alpha+4) s^{-4} \Delta_{\theta} v  \vspace{1mm}\\
\;\;\;\;+ 2s^{-2} \Delta_\theta (\partial_s^2 v)+s^{-4} \Delta^2_{\theta}v -(p+1)s^{-4}L^{-(p+1)} v + s^{-4} f(v)=0,
\end{array}
\end{equation}
where $f(t)=(t+L)^{-p}-L^{-p}+p L^{-(p+1)} t=O(t^2)$ for $t$  near 0.
Note that $f(t)$ is real analytic at $t=0$ and satisfies $f(0)=f'(0)=0$, $f''(0)=p(p+1) L^{-(p+2)}>0$. Therefore, the study of the behavior
of $u$ near $|x|=\infty$ is converted to the study of the behavior of $v$ of the equation \eqref{2.6} near $|y|=0$.

\begin{lem}
\label{l2.2}
Let $u \in C^4(\R^N)$ be a positive entire solution of \eqref{1.1-0} and let $v$ be given in \eqref{2.5}. Suppose that
\begin{equation}
\label{2.7}
|x|^{-\alpha} u(x)-L \to 0 \;\;\; \mbox{as $|x| \to \infty$}.
\end{equation}
Then for any integer $\ell \geq 0$ there exist constants $M=M(u)>0$, $s^*=s^* (u)>0$ such that
\begin{equation}
\label{2.8}
\lim_{|y| \to 0} v(y)=0, \;\;\; |\nabla^\ell v(y)| \leq \frac{M}{s^\ell} \;\;\; \mbox{for $s=|y| \leq s^*$}.
\end{equation}
\end{lem}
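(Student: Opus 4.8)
The plan is to first promote the single hypothesis \eqref{2.7} into pointwise decay bounds for \emph{all} derivatives of $u$ near $|x|=\infty$ by a scaling argument, and then push these bounds through the inversion $y\mapsto x/|x|^2$ to obtain \eqref{2.8}.

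First I would record the scale-invariance of the equation. Fixing $x_0\in\R^N$ with $R:=|x_0|$ large and setting $U(z):=R^{-\alpha}u(x_0+Rz)$ for $z\in B_{1/2}(0)$, the identity $\alpha(p+1)=4$ makes $U$ solve $\Delta^2 U=-U^{-p}$ again on $B_{1/2}(0)$. Since $\tfrac12 R\le|x_0+Rz|\le\tfrac32 R$ there, hypothesis \eqref{2.7} shows that for $R$ large $U$ is bounded above and bounded away from $0$ on $B_{1/2}(0)$, \emph{uniformly in $x_0$}, so $U^{-p}$ is uniformly bounded in $C^0(B_{1/2}(0))$. A standard elliptic bootstrap for $\Delta^2$ then applies: $L^q$ estimates push $U$ into $C^{3,\gamma}$, after which one alternates Schauder estimates (gaining four derivatives each time) with the observation that $t\mapsto -t^{-p}$ is $C^\infty$ on $[\inf U,\infty)$; this yields, for every $\ell\ge0$, a constant $C_\ell$ independent of $x_0$ with $\|U\|_{C^\ell(B_{1/4}(0))}\le C_\ell$ (in particular $u\in C^\infty(\R^N)$, which is needed since $u$ is only assumed $C^4$). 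Undoing the scaling gives
$$|\nabla^\ell u(x)|\le C_\ell\,|x|^{\alpha-\ell}\qquad\text{for }|x|\ge R_0,$$
with $R_0=R_0(u)$ not depending on $\ell$.

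Next I would transfer this to $v$. Writing $\Phi(y)=y/|y|^2$ and recalling from \eqref{2.5} that $v(y)+L=|y|^\alpha\,u(\Phi(y))$ with $|\Phi(y)|=1/s$, the fact that $\Phi$ is homogeneous of degree $-1$ and smooth off $0$ gives $|\nabla^k\Phi(y)|\le C_k\,s^{-k-1}$, and similarly $|\nabla^k(|y|^\alpha)|\le C_k\,s^{\alpha-k}$. Applying the Fa\`a di Bruno formula to $G(y):=u(\Phi(y))$ and using the derivative bounds for $u$ with $|x|=1/s$ (valid once $s\le 1/R_0$), each term of $\nabla^\ell G(y)$ is bounded by $|(\nabla^j u)(\Phi(y))|\prod_i|\nabla^{k_i}\Phi(y)|\le C\,s^{\,j-\alpha}\,s^{-(\ell+j)}=C\,s^{-\alpha-\ell}$, whence $|\nabla^\ell G(y)|\le C_\ell\,s^{-\alpha-\ell}$ for $0<s\le s^*:=\min\{1,R_0^{-1}\}$. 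Leibniz' rule applied to $v+L=|y|^\alpha G$ then gives $|\nabla^\ell(v+L)(y)|\le C_\ell\,s^{-\ell}$, and since $\nabla^\ell v=\nabla^\ell(v+L)$ for $\ell\ge1$ this is exactly \eqref{2.8}; the remaining assertions $v(y)\to0$ and $|v|\le M$ for $s\le s^*$ are just \eqref{2.7} together with continuity.

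I expect the only genuine difficulty to be in the scaling step: one must use the precise value $\alpha=4/(p+1)$ to keep the equation invariant, and crucially use that $|x|^{-\alpha}u\to L>0$ so that, after rescaling, $U$ stays bounded away from zero and the singular nonlinearity $U^{-p}$ is uniformly controlled, which is what makes the bootstrap constants $C_\ell$ independent of the base point $x_0$. Everything after that — the inversion and the chain/product rule bookkeeping — is routine.
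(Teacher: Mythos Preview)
Your proposal is correct and is precisely a detailed unpacking of what the paper means by its one-line proof (``The estimates in \eqref{2.8} follow from \eqref{2.7} by standard elliptic theory''): the scale-invariance of the equation under $u\mapsto R^{-\alpha}u(R\,\cdot)$ together with the two-sided bound on $|x|^{-\alpha}u$ coming from \eqref{2.7} yields uniform interior $C^\ell$ estimates after rescaling, and pushing these through the inversion $y=x/|x|^2$ gives \eqref{2.8}. There is no substantive difference in approach.
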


\begin{proof} The estimates in \eqref{2.8} follow from \eqref{2.7} by standard elliptic theory. \end{proof}

By Lemma \ref{l2.2}, we are reduced to study solutions of \eqref{2.6} satisfying \eqref{2.8}. Therefore, we will
assume that \eqref{2.8} holds in Sections 2-5.

Define
\begin{equation}
\label{2.9}
w(s, \theta)=v(s, \theta)-{\overline v}(s),
\end{equation}
where
$${\overline v}(s)=\frac{1}{\omega_{N-1}} \int_{S^{N-1}} v(s, \theta) d \theta, \;\;\; \omega_{N-1}=|S^{N-1}|.$$

\begin{lem}
\label{l2.3}
Let $v$ be a solution of \eqref{2.6}. Then ${\overline v}$ and $w$ satisfy
\begin{equation}
\label{2.10}
\begin{split} &{\p_s^4\overline v}-2(N-7+2 \alpha) s^{-1} {\p_s^3\overline v}   \\
&\;\;\;\;\;\;+(N^2+6\alpha N+6\alpha^2-16N-36\alpha+51) s^{-2} {\p_s^2\overline v}   \\
&\;\;\;\;\;\;-(N-5+2\alpha)(2N\alpha+2\alpha^2-3N-10\alpha+9) s^{-3} {\p_s\overline v}   \\
&\;\;\;\;\;\;-(p+1) s^{-4}L^{-(p+1)} {\overline v}+s^{-4} {\overline {f(v)}}=0
\end{split}
\end{equation}
and
\begin{equation}
\label{2.11}
\begin{split}& \partial_s^{4} w-2(N-7+2 \alpha) s^{-1} \partial_s^3 w   \\
&\;\;\;\;\;\;+(N^2+6\alpha N+6\alpha^2-16N-36\alpha+51) s^{-2}\partial_s^2 w  \\
&\;\;\;\;\;\;-(N-5+2\alpha)(2N\alpha+2\alpha^2-3N-10\alpha+9) s^{-3} \partial_s w  \\
&\;\;\;\;\;\;+2(N\alpha+\alpha^2-N-4\alpha+4) s^{-4} \Delta_\theta w-2(N-5+2\alpha) s^{-3} \Delta_\theta (\partial_s w)  \\
&\;\;\;\;\;\;+2 s^{-2} \Delta_\theta (\partial_s^2 w)+s^{-4} \Delta_\theta^2 w-(p+1)s^{-4}  L^{-(p+1)} w + s^{-4} g(w)=0,
\end{split}
\end{equation}
respectively, where
$$g(w):=f(v)-{\overline {f(v)}}=f'(\xi (s, \theta)) w(s, \theta)-{\overline {f'(\xi (s, \theta)) w(s, \theta)}}$$
and
$\xi (s, \theta)$ is between $v(s, \theta)$ and ${\overline v}(s)$.
\end{lem}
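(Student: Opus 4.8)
The plan is to obtain \eqref{2.10} by averaging the equation \eqref{2.6} over the unit sphere $S^{N-1}$, and then to obtain \eqref{2.11} by subtracting \eqref{2.10} from \eqref{2.6}. The one structural fact that makes everything work is that $S^{N-1}$ is a closed manifold, so the Laplace--Beltrami operator $\Delta_\theta$ produces no boundary contribution: for any $\phi\in C^2(S^{N-1})$ (depending smoothly on the parameter $s$), $\int_{S^{N-1}}\Delta_\theta\phi\,d\theta=0$. In particular, for the solution $v$, which is smooth for $0<s\le s^*$ by Lemma \ref{l2.2}, one has $\overline{\Delta_\theta v}=\overline{\Delta_\theta(\partial_s v)}=\overline{\Delta_\theta(\partial_s^2 v)}=\overline{\Delta_\theta^2 v}=0$, using that $\Delta_\theta$ commutes with $\partial_s$ and that $\Delta_\theta^2 v=\Delta_\theta(\Delta_\theta v)$.

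First I would apply the averaging operator $\phi\mapsto\frac{1}{\omega_{N-1}}\int_{S^{N-1}}\phi\,d\theta$ to each term of \eqref{2.6}. Since the bounds of Lemma \ref{l2.2} (or simply $u\in C^4$) permit differentiation under the integral sign, averaging commutes with each radial differential operator $\partial_s^k$ for $k=1,\dots,4$, so the four radial-derivative terms of \eqref{2.6} become the corresponding terms of \eqref{2.10} with $v$ replaced by $\overline v$. By the vanishing-average fact above, the four terms of \eqref{2.6} containing $\Delta_\theta$ all drop out. The linear zeroth-order term $-(p+1)s^{-4}L^{-(p+1)}v$ averages to $-(p+1)s^{-4}L^{-(p+1)}\overline v$, and $s^{-4}f(v)$ averages to $s^{-4}\overline{f(v)}$ by the definition of the overbar. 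This yields \eqref{2.10}.

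Next, to get \eqref{2.11}, I would write $v=w+\overline v$ and subtract \eqref{2.10} from \eqref{2.6}. In the radial-derivative terms the contribution of $\overline v$ cancels exactly against \eqref{2.10}, leaving $\partial_s^k w$; since $\overline v$ depends only on $s$, we have $\Delta_\theta\overline v=0$, hence $\Delta_\theta v=\Delta_\theta w$, $\Delta_\theta(\partial_s v)=\Delta_\theta(\partial_s w)$, $\Delta_\theta(\partial_s^2 v)=\Delta_\theta(\partial_s^2 w)$ and $\Delta_\theta^2 v=\Delta_\theta^2 w$, so the four angular terms of \eqref{2.6} carry over verbatim with $w$ in place of $v$ (and \eqref{2.10} contributes nothing to them). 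The zeroth-order linear term gives $-(p+1)s^{-4}L^{-(p+1)}w$. It remains to identify the nonlinear remainder $f(v)-\overline{f(v)}$ with $g(w)$: by the mean value theorem $f(v)-f(\overline v)=f'(\xi(s,\theta))\,(v-\overline v)=f'(\xi(s,\theta))\,w(s,\theta)$ for some $\xi(s,\theta)$ between $v(s,\theta)$ and $\overline v(s)$; averaging and using that $f(\overline v)$ is radial gives $\overline{f(v)}=f(\overline v)+\overline{f'(\xi)\,w}$, whence $f(v)-\overline{f(v)}=f'(\xi)\,w-\overline{f'(\xi)\,w}$, which is exactly $g(w)$. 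Assembling these pieces gives \eqref{2.11}.

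This is essentially a bookkeeping computation; the only points deserving a word of justification are the interchange of the spherical average with the radial differentiations (covered by Lemma \ref{l2.2}, or by $u\in C^4$) and the identity $\int_{S^{N-1}}\Delta_\theta\phi\,d\theta=0$, so I do not anticipate a genuine obstacle.
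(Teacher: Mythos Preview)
Your proposal is correct and follows exactly the same approach as the paper: average \eqref{2.6} over $S^{N-1}$ using $\overline{\Delta_\theta v}=0$ to get \eqref{2.10}, subtract to get \eqref{2.11}, and apply the mean value theorem to rewrite $f(v)-\overline{f(v)}$ as $f'(\xi)w-\overline{f'(\xi)w}$. The paper's own proof is terser (``direct calculations derive \eqref{2.10} and \eqref{2.11}''), but the content is identical.
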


\begin{proof} Since $$\overline{\Delta_\theta v}=\frac{1}{\omega_{N-1}}\int_{S^{N-1}}\Delta_\theta v(s,\theta) d \theta =0 ,$$
direct calculations derive \eqref{2.10} and \eqref{2.11}. Moreover, we have
\begin{eqnarray*}
g(w)&=&f(v)-{\overline {f(v)}}=f(v)-f({\overline v})-({\overline {f(v)-f({\overline v})}})\\
&=&f'(\xi (s, \theta)) w(s, \theta)-{\overline {f'(\xi (s, \theta)) w(s, \theta)}}
\end{eqnarray*}
for some $\xi (s, \theta)$ between $v(s, \theta)$ and ${\overline v}(s)$. Where
\begin{equation}
\label{2.12}
f'(\xi (s, \theta))=p L^{-(p+1)}-p [\xi (s, \theta)+L]^{-(p+1)} \geq 0.
\end{equation}
If we define \[\zeta (s):=\max_{\theta \in S^{N-1}} f'(\xi (s, \theta)),\] we see that
$\zeta (s) \to 0$ as $s \to 0$.
\end{proof}

To end this section, we notice that since $w(s, \cdot) \in H^2 (S^{N-1}) \subset L^2 (S^{N-1})$ and ${\overline w}=0$,
\begin{equation}
\label{2.13}
w(s, \theta)=\sum_{k=1}^\infty \sum_{j=1}^{m_k} w_j^k (s) Q_j^k (\theta),
\end{equation}
where $\{Q_1^0 (\theta), Q_1^1 (\theta), \ldots, Q_{m_1}^1 (\theta), Q_1^2 (\theta), Q_2^2 (\theta), \ldots, Q_{m_2}^2 (\theta), Q_1^3 (\theta), \ldots\}$ is the standard normalized basis of $H^2 (S^{N-1})$,
i.e.,
$\int_{S^{N-1}} Q_l^i (\theta) Q_m^j (\theta) d \theta=0$ if $i \neq j$ or $l \neq m$, $\|Q_j^i\|_{L^2(S^{N-1})}=1$
which is consisted by all the eigenfunctions of the operator $-\Delta_{S^{N-1}}$ or $\Delta^2_{S^{N-1}}$ in $H^2 (S^{N-1})$.
Note that $\{Q_1^1 (\theta), \ldots, Q_N^1 (\theta)\}$ is the basis of the eigenspace $H_1$ of $\Delta_{S^{N-1}}^2$ corresponding to the eigenvalue
$(N-1)^2$.

\vskip1cm
\section{ A priori estimate of $W(s)$ for $s$ near 0}
\setcounter{equation}{0}

In this section, we establish some fundamental estimates of $W(s)$ for $s$ near 0, where $W(s)$ is defined by
\begin{equation}
\label{3.1}
W(s)=\Big(\int_{S^{N-1}} w^2 (s, \theta) d \theta \Big)^{\frac{1}{2}}.
\end{equation}
We will see that the Lipschitz continuity of $W(s)$ at the origin is crucial in proving the expansion of $u$ near $\infty$, which can be used
to obtain the symmetry of $u$ by the moving-plane method.

\begin{prop}
\label{p3.1}
For $N \geq 3$, there exist $0<s_0<\min \{1, s^*\}$ $(s^*$ is given in Lemma \ref{l2.2}$)$, $0<\hat{\beta}<1$ and $C>0$ independent of $s$ such that for $s \in (0, s_0)$,
\begin{equation}
\label{3.2}
W(s) \leq \left \{ \begin{array}{ll} C s, \;\; &\mbox{for $N$ and $p$ satisfying \eqref{pN} or $p=7$ and $N=4$ with \eqref{1.6}},\vspace{2mm}\\
C s^{\hat{\beta}}, \;\; & \mbox{for $p \in (\frac{N+2}{6-N},p^*)$ and $N=3,4,5$},
\end{array} \right.
\end{equation}
where $p^*$ is given by \eqref{p*}.
\end{prop}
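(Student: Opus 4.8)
The plan is to study the equation \eqref{2.11} for $w=v-\overline v$ mode by mode in the expansion \eqref{2.13}. Projecting \eqref{2.11} onto a spherical harmonic $Q_j^k$ (so $\Delta_\theta\mapsto-\lambda_k$, $\Delta_\theta^2\mapsto\lambda_k^2$) yields, for every $k\ge1$ and $1\le j\le m_k$, a fourth order Euler type ODE
\[
s^4\,\mathcal L_k\!\left[w_j^k\right]=-g_j^k(s),\qquad
g_j^k(s):=\int_{S^{N-1}}g(w)(s,\theta)\,Q_j^k(\theta)\,d\theta ,
\]
where $\mathcal L_k$ is the ODE operator obtained from \eqref{2.11}. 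Since $g(w)=f'(\xi)w-\overline{f'(\xi)w}$ with $0\le f'(\xi)\le\zeta(s)$ by \eqref{2.12} and $\zeta(s)\to0$ as $s\to0$ (Lemma~\ref{l2.3}), Parseval gives $\big(\sum_{k,j}|g_j^k(s)|^2\big)^{1/2}=\|g(w)(s,\cdot)\|_{L^2(S^{N-1})}\le\zeta(s)\,W(s)$. Thus the forcing carries the same $s$-weight as the zeroth order linear term $(p+1)s^{-4}L^{-(p+1)}w$ but a coefficient vanishing at $s=0$: it is a genuinely subcritical perturbation. Putting $s=e^{-t}$, $t\to+\infty$, $\vartheta=s\partial_s=-\partial_t$, one has $s^4\mathcal L_k=P_k(-\partial_t)$, a constant coefficient operator whose characteristic roots are the negatives of the indicial exponents of \eqref{2.6} for the $k$-th mode.

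The algebraic heart is the factorisation of $P_k$. Because the transformation \eqref{2.5} is built on the singular solution $U_s=Lr^\alpha$, one checks that $P_k(\gamma)=\widetilde P_k(\alpha-\gamma)-pL^{-(p+1)}$, where
\[
\widetilde P_k(\gamma)=\big[\gamma(\gamma+N-2)-\lambda_k\big]\big[(\gamma-2)(\gamma+N-4)-\lambda_k\big]=(\gamma-k)(\gamma+k+N-2)(\gamma-k-2)(\gamma+k+N-4)
\]
is the indicial polynomial of $\Delta_x^2$ (using $\lambda_k=k(N+k-2)$) and $pL^{-(p+1)}$ is the linearisation constant of $u^{-p}$ at $U_s$, stripped of the $|x|^{-4}$ factor; recall $\alpha(p+1)=4$ and $L^{-(p+1)}=\alpha(2-\alpha)(N+\alpha-2)(N+\alpha-4)$ from \eqref{alpha-L}. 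Hence the exponents of the $k$-th mode of $v$ are the roots of $\widetilde P_k(\alpha-\gamma)=pL^{-(p+1)}$. For $k=1$ one verifies by direct substitution that $\gamma=1$ (the translation mode) and $\gamma=N+2\alpha-5$ are two of the four roots, and that the remaining two are real, one larger than $N+\alpha-1$ and one negative. For $k\ge2$ the same factorisation shows that the positive roots only increase with $k$ (they behave like $k+N-2+\alpha$ and $k+N-4+\alpha$), so the slowest positive exponent over all $k\ge1$ is attained at $k=1$; it equals $1$ exactly in the regime \eqref{pN} (for $N\le5$ this is $N+2\alpha\ge6$; automatic for $N\ge6$), while for $p\in(\tfrac{N+2}{6-N},p^*)$ with $N=3,4,5$ one has $\widehat\beta:=N+2\alpha-5\in(0,1)$ and this is the slowest exponent. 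The genuinely exceptional case is $p=7$, $N=4$, where $N+2\alpha-5=0$ puts a simple exponent on the imaginary axis. Away from that case $P_k(-\partial_t)$ has an exponential dichotomy: the stable subspace ($\operatorname{Re}\gamma>0$) has slowest rate $\widehat\beta_k$ with $\inf_{k\ge1}\widehat\beta_k=1$ (resp. $\widehat\beta$), and the unstable directions decay, as $t\to+\infty$ from the right, at a rate bounded below uniformly in $k$.

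The estimate then follows by a Gronwall–bootstrap argument. Interpolating the hypothesis \eqref{2.7} (which gives $W\to0$) with the derivative bounds \eqref{2.8} shows that $(w,\partial_tw,\partial_t^2w,\partial_t^3w)\to0$ as $t\to\infty$, so each projected first order trajectory lies on the stable manifold, the unstable part being reconstructed by integrating the forcing from $+\infty$. Writing $Z(t):=W(e^{-t})$, taking $\ell^2$-norms in $(k,j)$, and using $\widehat\beta_k\ge\widehat\beta$, the uniform lower bound $\nu>0$ on the unstable rates, the eigenfunction bounds \eqref{2.4-1}--\eqref{2.4-2}, and the summability coming from $\widehat\beta_k\to\infty$, one is led to an inequality
\[
Z(t)\le C\,e^{-\widehat\beta(t-T)}+\int_T^t e^{-\widehat\beta(t-\tau)}\zeta(e^{-\tau})\,Z(\tau)\,d\tau+\int_t^\infty e^{-\nu(\tau-t)}\zeta(e^{-\tau})\,Z(\tau)\,d\tau .
\]
Since $\zeta(e^{-\tau})\to0$, first extracting some exponential decay of $Z$ and then iterating to the sharp rate yields $Z(t)\le Ce^{-\widehat\beta t}$, i.e. $W(s)\le Cs^{\widehat\beta}$ with $\widehat\beta=1$ in the first case of \eqref{3.2} and $\widehat\beta=N+2\alpha-5\in(0,1)$ in the second. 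For $p=7$, $N=4$ the simple exponent at $0$ cannot be excluded by $v\to0$ alone, because the associated ``centre'' mode is fed by $\int^\infty\zeta(e^{-\tau})\,d\tau$, which need not converge; here the extra hypothesis \eqref{1.6} is used: it forces $\zeta(s)=O(s^{\epsilon_0})$, hence $\int^\infty\zeta(e^{-\tau})\,d\tau<\infty$, the centre mode is killed, and the next exponent being $1$ gives $W(s)\le Cs$.

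The step I expect to be the main obstacle is the combination of (i) the indicial analysis carried out uniformly in $k$, where one must rule out a stray positive exponent below the threshold for the modes $k\ge2$ and must handle the endpoint $p=\tfrac{N+2}{6-N}$ (where $\gamma=1$ and $\gamma=N+2\alpha-5$ collide, so a double root could a priori introduce a $\log(1/s)$ factor that has to be excluded), and (ii) making the dichotomy-plus-Gronwall step uniform in $k$ — controlling the spectral projections of $P_k(-\partial_t)$ and the unstable rates as $k\to\infty$, which is exactly where \eqref{2.4-1}--\eqref{2.4-2} and the spectral gap $\lambda_k\ge N-1$ enter — together with the delicate borderline case $p=7$, $N=4$. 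Once \eqref{3.2} is established, the subsequent expansion of $u$ near $|x|=\infty$ and the moving plane procedure are run as in \cite{GHZ}.
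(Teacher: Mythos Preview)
Your plan is correct and follows the same route as the paper: project \eqref{2.11} onto each $Q_j^k$, pass to $t=-\ln s$, locate the four indicial roots of the resulting constant-coefficient ODE, kill the unstable directions using $z_j^k\to0$, bound the forcing by $\zeta(s)W(s)$, and close by a Gronwall/bootstrap summed over $k$. The paper does this by writing the roots $\beta_i^{(k)}$ explicitly (formulas \eqref{3.6}--\eqref{3.7}) and proving four Claims that pin down their signs case by case, then estimating each $|z_j^k(t)|$ individually (after a reduction to $|g_j^k|=o(1)|w_j^k|$ via a good/bad--index argument) and summing; your factorisation $P_k(\gamma)=\widetilde P_k(\alpha-\gamma)-pL^{-(p+1)}$ is a cleaner way to see the same roots and immediately explains why $\gamma=1$ is always present at $k=1$ (translation), while your aggregated integral inequality for $Z(t)$ is equivalent to the paper's sum-after-modes.

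One small correction: your parenthetical ``for $N\le5$ this is $N+2\alpha\ge6$'' does not cover the piece $N=4$, $p>7$ of \eqref{pN}, where $N+2\alpha-5<0$; there that exponent is \emph{unstable} rather than slow-stable, so it is killed by $v\to0$ and the slowest stable exponent is again $1$ --- the paper treats this as its case~(iii) of Claim~4. For $p=7$, $N=4$ the paper's mechanism is also slightly different from yours: rather than using $\zeta(s)=O(s^{\epsilon_0})$, it bounds $K(t)=\int_t^\infty|z_j^1|$ directly from \eqref{1.6} and runs an ODE inequality for $K$; both arguments yield the same conclusion. Your worry about a logarithm at the double root $p=\tfrac{N+2}{6-N}$ is legitimate and is glossed over in the paper as well; it does not affect the downstream applications.
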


In fact, $\hat{\beta}=|\beta_3^{(1)}|=|5-N-2\alpha| \in (0,1)$ is given by \eqref{beta} below when $p \in (\frac{N+2}{6-N},p^*)$ and $N=3,4,5$.)

\begin{proof}
Let $Q_j^k (\theta) \; (1 \leq j \leq m_k, k=1,2, \ldots)$ be an eigenfunction of $-\Delta_{S^{N-1}}$ corresponding
to $\lambda_k=k (N+k-2)$. From Lemma \ref{l2.3}, we see that $w_j^k (s)$ satisfies the equation
\begin{equation}
\label{3.3}
\begin{split} & (w_j^k)^{(4)}(s)-2(N-7+2 \alpha) s^{-1} (w_j^k)^{(3)}(s) \\
&\;\;\;+\big(N^2+6\alpha N+6\alpha^2-16N-36\alpha+51-2 \lambda_k\big) s^{-2} (w_j^k)''(s)  \\
&\;\;\;-\big[(N-5+2\alpha)(2N\alpha+2\alpha^2-3N-10\alpha+9)-2(N-5+2 \alpha) \lambda_k \big] s^{-3} (w_j^k)'(s)  \\
&\;\;\;+\big[\lambda_k^2-2\big(N\alpha+\alpha^2-N-4\alpha+4\big) \lambda_k-(p+1) L^{-(p+1)} \big] s^{-4} w_j^k=s^{-4} g_j^k (s),
\end{split}
\end{equation}
where
$$g_j^k (s)=\int_{S^{N-1}} f'(\xi (s, \theta)) w(s, \theta) Q_j^k (\theta) d \theta,$$
which can be controlled by $|g_j^k (s)| \leq  \zeta (s) W(s)$,  here $\zeta (s) \to 0$ and $W(s) \to 0$ as $s \to 0$.

Note that $g_j^k(s)$ and $w_j^k(s)$ are Fourier's coefficients of $f^{\prime}(\xi)w(s,\theta)$ and $w(s,\theta)$ respectively. Moreover,
\begin{equation}
\label{L2-est} \|f^{\prime}(\xi)w(s,\theta)\|_{L^2(S^{N-1})}\le \zeta(s)\|w(s,\theta)\|_{L^2(S^{N-1})}=o_s(1)\|w\|_{L^2(S^{N-1})}
\end{equation}
and
$W (s)=[\sum_{k=1}^{\infty}\sum_{j=1}^{m_k} (w_j^k(s))^2]^{\frac{1}{2}}.$
Therefore, for any $(j,k)$ fixed and $s$ sufficiently small, to estimate $W(s)$, we only need to assume
 \begin{equation}
\label{3.3-3}
|g_j^k (s)|=o_s (1) |w_j^k (s)|.
\end{equation}
In fact, from (\ref{L2-est}), the expression of $w(s,\theta)$ given by (\ref{2.13}) and
\[ f^{\prime}(\xi)w(s,\theta)=\sum_{k=1}^{\infty}\sum_{j=1}^{m_k} g_j^k(s)Q_j^k(\theta),\]
we see that
\[\sum_{k=1}^{\infty}\sum_{j=1}^{m_k} (g_j^k(s))^2=o_s(1)\sum_{k=1}^{\infty}\sum_{j=1}^{m_k} (w_j^k(s))^2.\]
Therefore, there are two cases:
\[  (i)  \quad |g_j^k (s)|=o_s (1) |w_j^k (s)|;\qquad
(ii) \quad |g_j^k (s)|\neq o_s (1) |w_j^k (s)|.
 \]
For any fixed $s \in (0, s^*)$, denote
$$G_s=\{(j,k):1\le j\le m_k, k\ge 1 \,\,\,\mbox{such that (i) holds}\},$$
$$B_s=\{(j,k):1\le j\le m_k, k\ge 1\,\,\, \mbox{such that (ii) holds}\}.$$
We claim that there exists $C>0$ independent of $j, k$ and $s$ such that for any $s\in (0,s^{**})$ and any $(j,k)\in B_s$,
\begin{equation}
\label{inf-est}  |g_j^k (s)|\ge C |w_j^k (s)|,
\end{equation}
where $0<s^{**} \leq s^*$ ($s^*$ is given in Lemma \ref{l2.2}). Suppose not, there exists $c_n \to 0$,
$s_n \to 0$ as $n\to \infty$ and $(j_n,k_n)\in B_{s_n}$ such that
 \[|g_{j_n}^{k_n} (s_n)|\le c_n |w_{j_n}^{k_n} (s_n)|.\]
This implies
\[ |g_{j_n}^{k_n} (s_n)|=o_{s_n} (1) |w_{j_n}^{k_n} (s_n)| \;\;\; \mbox{for $n$ large enough,}\]
which contradicts $(j_n,k_n)\in B_{s_n}$. Therefore, for any $s \in (0, s^{**})$,
\[\sum_{(j,k)\in B}|w_j^k(s)|^2\le C^{-2}\sum_{(j,k)\in B}|g_j^k(s)|^2\le o_s(1)\sum_{k=1}^\infty\sum_{j=1}^{m_k}|w_j^k(s)|^2 .\]
Therefore, without loss of generality, we assume that (\ref{3.3-3}) holds for $0<s\le s^{**}$ , any $k\ge 1$ and $1 \leq j \leq m_k$.

Let $t=-\ln s$, $z_j^k (t)=w_j^k (s)$. Then $z_j^k (t)$ satisfies the equation
\begin{equation}
\label{3.4}
\begin{array}{l}
(z_j^k)^{(4)}(t)+2(N-4+2 \alpha)(z_j^k)^{(3)}(t)\vspace{1mm}\\
\;\;\;\;\;\;+\big[N^2+6\alpha N+6\alpha^2-10N-24\alpha+20-2\lambda_k\big] (z_j^k)''(t) \vspace{1mm}\\
\;\;\;\;\;\;+2(N-4+2\alpha)(N\alpha-N-4\alpha+\alpha^2+2-\lambda_k) (z_j^k)'(t)\vspace{1mm}\\
\;\;\;\;\;\;+\big[\lambda_k^2-2(N\alpha+\alpha^2-N-4\alpha+4) \lambda_k-(p+1) L^{-(p+1)}\big] z_j^k (t)={\tilde g}_j^k (t),
\end{array}
\end{equation}
where ${\tilde g}_j^k (t)=g_j^k (e^{-t})$. We also know from $\zeta(s)$ and $g_j^k(s)$ that
\begin{equation}
\label{3.4-1}
|{\tilde g}_j^k (t)| \leq {\tilde \zeta} (t) {\tilde W}(t),
\end{equation}where
\begin{equation}
\label{3.4-2}
{\tilde \zeta} (t):= \zeta (e^{-t}) \to 0, \;\;\;\; \mbox{and} \;\; {\tilde W}(t):=W(e^{-t}) \to 0 \;\; \;\mbox{as $t \to \infty$}.
\end{equation}

The corresponding characteristic polynomial of \eqref{3.4} is
\begin{equation}
\label{3.5}
\begin{array}{l}
\beta^4+2(N-4+2 \alpha) \beta^3+\big(N^2+6\alpha N+6\alpha^2-10N-24\alpha+20\vspace{1mm}\\
\;\;\;\;\;\;
-2\lambda_k\big) \beta^2 +2(N-4+2\alpha)(N\alpha-N-4\alpha+\alpha^2+2-\lambda_k)\beta \vspace{1mm}\\
\;\;\;\;\;\;+\lambda_k^2-2(N\alpha+\alpha^2-N-4\alpha+4) \lambda_k-(p+1) L^{-(p+1)}=0
\end{array}
\end{equation}
and using the MATLAB, the four roots of \eqref{3.5} are given by
\begin{equation}
\label{3.6}
\left \{\begin{split}
&\beta_1^{(k)}=\frac{1}{2}\Big(4-N-2\alpha+\sqrt{4+(N-2+2k)^2+4\sqrt{\rho_k}}\,\Big),\\
&\beta_2^{(k)}=\frac{1}{2}\Big(4-N-2\alpha-\sqrt{4+(N-2+2k)^2+4\sqrt{\rho_k}}\,\Big),\\
&\beta_3^{(k)}=\frac{1}{2}\Big(4-N-2\alpha+\sqrt{4+(N-2+2k)^2-4\sqrt{\rho_k}}\,\Big),\\
&\beta_4^{(k)}=\frac{1}{2}\Big(4-N-2\alpha-\sqrt{4+(N-2+2k)^2-4\sqrt{\rho_k}}\,\Big),
\end{split} \right.
\end{equation}
where
\begin{equation}
\label{3.7}
\rho_k=(N-2+2k)^2+p\alpha(2-\alpha)(N-2+\alpha)(N-4+\alpha).
\end{equation}

We first analyze the four roots $\beta^{(k)}_j, j=1,2,3,4$ for $k=1, 2, \ldots$. Note that
\begin{equation}
\label{fact-claim2}
\alpha\in (1,2),\;\; \mbox{for $N=3$, $1<p<3$;}\quad
\alpha\in (0,2),\;\; \mbox{for $N\ge 4$, $p>1$}.
\end{equation}
Then, we see from \eqref{3.7} that $\rho_k>0$ for $k=1,2, \ldots$.
Set \[T_k:=[4+(N-2+2k)^2]^2-16 \rho_k.\]
Noticing that $p \alpha=4-\alpha$, we have
$$(2-\alpha)(N+\alpha-4)\le (N-2)^2,\quad (4-\alpha)(N+\alpha-2)\le (N+2)^2.$$
Thus
\begin{eqnarray*}
T_k&=&[(N-2+2k)^2-4]^2-16(4-\alpha)(2-\alpha)(N+\alpha-2)(N+\alpha-4)\\
&\ge& [N^2-4]^2-(N-2)^2 (N+2)^2 \ge 0
\end{eqnarray*}
for $N=3$ and $p \in (1,3)$; $N \ge 4$ and $p \ge 1$ and any $k=1,2, \ldots$.
This indicates that $\beta_j^{(k)}$ are real numbers for $k=1,2, \ldots$.
Therefore, taking into account the expressions in \eqref{3.6} of $\beta_j^{(k)}$,
we have demonstrated the following statement.\medskip

\noindent{\it Claim 1.} For any $k \geq 1$; $N=3$ and $p\in (1,3)$; $N\ge 4$ and $p>1$, the roots $\beta_j^{(k)}\; (j=1,2,3,4)$ are real numbers.
Moreover,
$$\beta_2^{(k)}<\beta_4^{(k)}\le\beta_3^{(k)}<\beta_1^{(k)}.$$

\begin{rem}
\label{rem3.2}
For $N=3$ and $p\in (1,3)$; $N \ge 4$ and $p>1$, noticing that
\begin{equation}
\label{fact-rem3.2}
4-2\alpha-N<0\quad \mbox{and} \quad 4+(N+2k-2)^2-(4-2\alpha-N)^2>0,
\end{equation}
we find from \eqref{3.6} that
\begin{equation}
\label{relation-rem3.2}
\beta_2^{(k)}<\beta_4^{(k)}<  0 <\beta_1^{(k)} \quad \mbox{for any $k\ge 1$}.
\end{equation}
\end{rem}

We now determine the sign of $\beta_3^{(k)}$ for any $k \ge 2$.\medskip

\noindent{\it Claim 2.} For any $k \ge 2$, $\beta_3^{(k)}>0$ when $N=3$ and $p \in (1,3)$; $N \ge 4$ and $p>1$.\medskip

In fact, by \eqref{fact-rem3.2}, we see that $\beta_3^{(k)}>0$ is equivalent to
$$ {\hat T}_k:=\big[4+(N+2k-2)^2-(4-2\alpha-N)^2 \big]^2-16\rho_k > 0,$$
for any $k \ge 2$ when $N=3$ and $p\in (1,3)$; $N \ge 4$ and $p>1$.
Writing ${\hat T}_k$ as
\begin{eqnarray*}
{\hat T}_k&=&16(k-\alpha)(k+2-\alpha)(N+\alpha-2+k)(N+\alpha-4+k)\\
& &\;\;\;\; -16(2-\alpha)(4-\alpha)(N+\alpha-2)(N+\alpha-4).
\end{eqnarray*}
Therefore Claim 2 follows since we have
\begin{eqnarray*}
& &k-\alpha\ge 2-\alpha > 0, \;\;\;\;\;\;\;\;\;\;\;  N+\alpha-2+k\ge N+\alpha-2>0,\\
& & k+2-\alpha\ge 4-\alpha>0, \;\;\;\;  N+\alpha-4+k\ge N+\alpha-4>0.
\end{eqnarray*}

For the root $\beta_4^{(k)}$, we have the following assertion. \medskip

\noindent{\it Claim 3.} For any $k \geq 2$, $\beta_4^{(k)}<-1$ for $N =3$ and $p\in (1,3)$; $N \ge 4$ and $p>1$.

By the expression of $\beta_4^{(k)}$, we have
\begin{equation}
\label{3.6-1}
\beta_4^{(k)}+1=\frac{1}{2} \Big[6-N-2\alpha -\sqrt{4+(N+2k-2)^2-4 \sqrt{\rho_k}}\,\Big].
\end{equation}

Obviously,
$\beta_4^{(k)}+1<0$ when $N \ge 6$ and $p>1$, and even in the case of $N \in \{3,4,5\}$ and $6-N-2\alpha \leq 0$, i.e.
$p \in (1,\frac{N+2}{6-N}]$. On the other cases, we see that $\beta_4^{(k)}+1<0$ is equivalent to
$$0 < 6-N-2\alpha < \sqrt{4+(N+2k-2)^2-4 \sqrt{\rho_k}}. $$
So
to obtain our claim, it's sufficient to show that
$$ {\tilde T}_k:=\Big[4+(N+2k-2)^2-(6-N -2\alpha)^2\Big]^2-16 \rho_k > 0.$$
Since
\begin{eqnarray*}
{\tilde T}_k&=&16(k+1-\alpha)(k+3-\alpha)(N+\alpha+k-3)(N+\alpha+k-5) \\
& & \;\;\;\;\;-16(2-\alpha)(4-\alpha)(N+\alpha-2)(N+\alpha-4).
\end{eqnarray*}
We find again ${\tilde T}_k>0$ for any $k \ge 2$ and $p \in (\frac{5}{3},3)$ with $N=3$; $p>\frac{N+2}{6-N}$ with $N= 4, 5$ from the facts
\begin{eqnarray*}
&& k+1-\alpha \ge 2-\alpha > 0,\;\;\;\;\;\;\;\; N+\alpha+k-3\ge N+\alpha-2>0,\\
&&k+3-\alpha\ge 4-\alpha>0, \;\;\;\;\;\;\;\; N+\alpha+k-5\ge N+\alpha-4>0.
\end{eqnarray*}
Consequently, the Claim 3 is derived from all these arguments.

\begin{rem}
\label{r3.2}
It follows from Claims 1-3 that for $N =3$ and $p\in (1,3)$; $N\ge 4$ and $p>1$; any $k\ge 2$,
\begin{equation}
\label{3.13}
\beta_2^{(k)}<\beta_4^{(k)}<-1<0<\beta_3^{(k)}<\beta_1^{(k)}.
\end{equation}
Moreover, we deduce from the expressions of $\beta_j^{(k)}$ that
\[\begin{array}{ll}\beta_2^{(k+1)}<\beta_2^{(k)}<0,\quad &\beta_4^{(k+1)}<\beta_4^{(k)}<-1,\vspace{1mm}\\
\beta_3^{(k+1)}>\beta_3^{(k)}>0,& \beta_1^{(k+1)}>\beta_1^{(k)}>0
\end{array}\]
and
\[\begin{array}{ll}
\beta_2^{(k+1)}-\beta_2^{(k)}\to -1 \;\; & \mbox{as $k\to \infty$},\vspace{1mm}\\
\beta_4^{(k+1)}-\beta_4^{(k)}\to -1 \;\; & \mbox{as $k\to \infty$},\vspace{1mm}\\
\beta_3^{(k+1)}-\beta_3^{(k)} \to 1 \;\; &\mbox{as $k \to \infty$}.
\end{array}\]
\end{rem}

Now we investigate the details of $\beta_j^{(1)}, \; j=1,2,3,4$. Recalling that $p^*$ is given by \eqref{p*}, we have:\medskip

\noindent{\it Claim 4.}  The following inequalities hold for $k=1$:
\begin{eqnarray*}
\beta_2^{(1)}<\beta_4^{(1)}\le\beta_3^{(1)}=-1<0<\beta_1^{(1)}, &&\mbox{for $\left\{
                                                           \begin{array}{ll}  p \in (1,\frac{N+2}{6-N}],& N=3,4,5;\\
                                                            p \in (1,\infty), & N \geq 6;
                                                           \end{array} \right.$} \\
\beta_2^{(1)}<\beta_4^{(1)}=-1<\beta_3^{(1)}<0<\beta_1^{(1)}, &&\mbox{for $p\in (\frac{N+2}{6-N},p^*)$ and $N =3,4,5$};\\
\beta_2^{(1)}<\beta_4^{(1)}=-1<\beta_3^{(1)}=0<\beta_1^{(1)}, &&\mbox{for $p=7$ and $N =4$};\\
 \beta_2^{(1)}<\beta_4^{(1)}=-1<0<\beta_3^{(1)}<\beta_1^{(1)}, &&\mbox{for $p \in (7,\infty)$ and $N =4$}.
 \end{eqnarray*}

For $k=1$, from the expressions of
$$\rho_1=N^2 + (4-\alpha)(2-\alpha)(N-2+\alpha)(N-4+\alpha),$$
a direct calculation shows that
$$\sqrt{4+(N)^2-4 \sqrt{\rho_1}}=|N-6 +2 \alpha|$$
and therefore
\begin{eqnarray*}
& & \beta_3^{(1)}=\frac{1}{2} \Big[4-N-2\alpha -|N-6 +2\alpha| \Big],\\
& &\beta_4^{(1)}=\frac{1}{2} \Big[4-N-2\alpha +|N-6 +2\alpha| \Big].
\end{eqnarray*}
As before, we have obviously $N-6 +2\alpha >0$ when $N \ge 6$ or $N \in \{3,4,5\}$ and $p\in (1, \frac{N+2}{6-N})$, which implies
$$\beta_4^{(1)}=5-N- 2\alpha <-1=\beta_3^{(1)}.$$
This combining with Claim 1 and \eqref{relation-rem3.2} yields that
$$\beta_2^{(1)}<\beta_4^{(1)}<\beta_3^{(1)}=-1<0<\beta_1^{(1)}.$$
We obtain also for $p\in [\frac{N+2}{6-N},p^*)$ and $N=3$, $4$ or $5$,
$$\beta_4^{(1)}=-1 \leq 5-N-2\alpha =\beta_3^{(1)}<0;$$
and when $N=4$, $p=p^*=\frac{N+3}{5-N}=7$,
$$\beta_4^{(1)}=-1<0=\beta_3^{(1)};$$
when $N=4$, $p>\frac{N+3}{5-N}=7$,
$$\beta_4^{(1)}=-1<0<5-N-2\alpha =\beta_3^{(1)}<1.$$
Combining with Claim 1 and Remark \ref{rem3.2}, we prove that Claim 4 holds.

We continue the proof of Proposition \ref{p3.1}.

For any $k \geq 2$, from the equation satisfied by $z_j^k$ and the ODE theory, we see that, for any $T > -\ln s^{**}$, there exist constants $A_{j,i}^k$, $B_i^k$ $(i=1,2,3,4)$ such that for $t>T$,
\begin{equation}
\label{3.14}
z_j^k (t)=\sum_{i=1}^4 \Big[A_{j,i}^k e^{\beta_i^{(k)}t}+B_i^k \int_T^t e^{\beta_i^{(k)}(t-s)} {\tilde g}_j^k(s) d s \Big],
\end{equation}
where $A_{j,i}^k \; (i=1,2,3,4)$ depend on $T$ and $\beta_i^{(k)}$, but $B_i^k \; (i=1,2,3,4)$ depend only on $\beta_i^{(k)}$. More precisely, the detailed calculations show that
$$A_{j,1}^k=\frac{F_{j,1}^k (T)}{(\beta_1^{(k)}- \beta_2^{(k)})(\beta_1^{(k)}-\beta_3^{(k)})(\beta_1^{(k)}-\beta_4^{(k)})} e^{-\beta_1^{(k)} T},$$
$$A_{j,2}^k=\Big[\frac{F_{j,1}^k (T)}{(\beta_2^{(k)}- \beta_1^{(k)})(\beta_2^{(k)}-\beta_3^{(k)})(\beta_2^{(k)}-\beta_4^{(k)})}+\frac{F_{j,2}^k (T)}{(\beta_2^{(k)}-\beta_3^{(k)})(\beta_2^{(k)}-\beta_4^{(k)})} \Big] e^{-\beta_2^{(k)} T},$$
\begin{eqnarray*}
A_{j,3}^k &=& \Big[\frac{F_{j,1}^k (T)}{(\beta_3^{(k)}- \beta_1^{(k)})(\beta_3^{(k)}-\beta_2^{(k)})(\beta_3^{(k)}-\beta_4^{(k)})}\\
& &\;\;\;\;\;\;\;+\frac{F_{j,2}^k (T)}{(\beta_3^{(k)}-\beta_2^{(k)})(\beta_3^{(k)}-\beta_4^{(k)})}+\frac{F_{j,3}^k (T)}{(\beta_3^{(k)}-\beta_4^{(k)})} \Big] e^{-\beta_3^{(k)} T},
\end{eqnarray*}
\begin{eqnarray*}
A_{j,4}^k&=& \Big[\frac{F_{j,1}^k (T)}{(\beta_4^{(k)}- \beta_1^{(k)})(\beta_4^{(k)}-\beta_2^{(k)})(\beta_4^{(k)}-\beta_3^{(k)})}\\
& &\;\;\;\;\;\;\;+\frac{F_{j,2}^k (T)}{(\beta_4^{(k)}-\beta_2^{(k)})(\beta_4^{(k)}-\beta_3^{(k)})}+\frac{F_{j,3}^k (T)}{(\beta_4^{(k)}-\beta_3^{(k)})}+z_j^k (T) \Big] e^{-\beta_4^{(k)} T},
\end{eqnarray*}
where
$$F_{j,1}^k (T)=(\partial_t-\beta_2^{(k)})(\partial_t-\beta_3^{(k)})(\partial_t-\beta_4^{(k)}) z_j^k (T),$$
$$F_{j,2}^k (T)=(\partial_t-\beta_3^{(k)})(\partial_t-\beta_4^{(k)}) z_j^k (T),$$
$$F_{j,3}^k (T)=(\partial_t-\beta_4^{(k)}) z_j^k (T)$$
and
$$B_{i}^k=\prod_{j \ne i} \frac{1}{\beta_i^{(k)}- \beta_j^{(k)}}, \;\; \;\forall i \in \{1,2,3,4\}.$$
Since $w(s,\cdot) \to 0$ as $s \to 0^{+}$, we have $z_j^k (t) \to 0$ as $t \to \infty$. Moreover, ${\tilde g}_j^k (t) \to 0$ as $t \to \infty$. It follows from $\beta_1^{(k)}>\beta_3^{(k)}>0$ for $k \geq 2$; $N=3$ and $p\in(1,3)$; $N \geq 4$ and $p>1$ that
$$\int_t^{\infty} e^{\beta_1^{(k)}(t-s)} {\tilde g}_j^k (s) d s \to 0, \;\;\;\; \int_t^{\infty} e^{\beta_3^{(k)}(t-s)} {\tilde g}_j^k (s) d s \to 0 \;\; \mbox{as $t\to \infty$}.$$
By means of $\int_T^t=\int_T^{\infty}-\int_t^{\infty}$, we rewrite $z_j^k (t)$ in the following form:
\begin{eqnarray*}
z_j^k (t)&=& M_{j,1}^k e^{\beta_1^{(k)}t}+M_{j,3}^k e^{\beta_3^{(k)}t}+ A_{j,2}^k e^{\beta_2^{(k)}t}+ A_{j,4}^k e^{\beta_4^{(k)}t}\\
& &\;\;-B_1^k \int_t^{\infty} e^{\beta_1^{(k)}(t-s)} {\tilde g}_j^k (s) d s-B_3^k \int_t^{\infty} e^{\beta_3^{(k)}(t-s)} {\tilde g}_j^k (s) d s\\
& &\;\;+B_2^k \int_T^t e^{\beta_2^{(k)}(t-s)} {\tilde g}_j^k (s) d s+B_4^k \int_T^t e^{\beta_4^{(k)}(t-s)} {\tilde g}_j^k (s) d s,
\end{eqnarray*}
where
$$M_{j,1}^k=A_{j,1}^k+B_1^k \int_T^\infty e^{-\tau\beta_1^{(k)}} {\tilde g}_j^k (\tau) d \tau, \;\;\; M_{j,3}^k=A_{j,3}^k+B_3^k \int_T^\infty e^{-\tau\beta_3^{(k)}} {\tilde g}_j^k (\tau) d \tau.$$
The fact that $z_j^k (t) \to 0$ as $t \to \infty$ implies $M_{j,1}^k=M_{j,3}^k=0$. Therefore,
\begin{eqnarray}
\label{3.14-0}
z_j^k (t) &=&A_{j,2}^k e^{\beta_2^{(k)} T}  e^{\beta_2^{(k)} (t-T)}+A_{j,4}^k e^{\beta_4^{(k)} T} e^{\beta_4^{(k)} (t-T)} \nonumber \\
& &\;\;\;\;\;-B_{1}^k \int_t^\infty e^{\beta_1^{(k)} (t-\tau)} {\tilde g}_j^k (\tau) d \tau -B_{3}^k \int_t^\infty e^{\beta_3^{(k)} (t-\tau)} {\tilde g}_j^k (\tau) d \tau \nonumber \\
& &\;\;\;\;\;+B_{2}^k \int_T^t e^{\beta_2^{(k)} (t-\tau)} {\tilde g}_j^k (\tau) d \tau+B_{4}^k \int_T^t e^{\beta_4^{(k)} (t-\tau)} {\tilde g}_j^k (\tau) d \tau.
\end{eqnarray}
We now establish the estimate of $z_j^k(t)$ with $k \geq 1, \; 1 \leq j \leq m_1$.
We start with $k \geq 2$, $1 \leq j \leq m_k$ and claim that
\begin{equation}
\label{3.14-1}
|z_j^k (t)|=O(k e^{\beta_4^{(k)} (t-T)})
\end{equation}
for $t>T$. For any fixed $(k,j)$, if $z_j^k(t)\equiv 0$, this is trivial.
Assume that $z_j^k (t) \not \equiv 0$ for $t \in [T, \infty)$ in the following, it is known from \eqref{3.3-3} that
\begin{equation}
\label{3.14-2}
|{\tilde g}_j^k (t)|=o_t (1) |z_j^k (t)| \;\;\; \mbox{for $t \in (T, \infty)$}.
\end{equation}

It follows from Lemma \ref{l2.2} and \eqref{3.14-0} that, for $t \in (T, \infty)$,
\begin{eqnarray}
\label{3.14-3}
|z_j^k (t)| &\leq& O \Big(k e^{\beta_4^{(k)} (t-T)} \Big)+ C \int_T^t  e^{\beta_4^{(k)} (t-\tau)}  o_\tau (1) |z_j^k (\tau)| d \tau \nonumber \\
& &\;\;\;\;\; +C \int_t^\infty  e^{\beta_3^{(k)} (t-\tau)} o_\tau (1) |z_j^k (\tau)| d \tau.
\end{eqnarray}
Note that
$$e^{\beta_1^{(k)} (t-\tau)} \leq e^{\beta_3^{(k)} (t-\tau)}, \;\; \mbox{and}\;\; e^{\beta_2^{(k)} (t-\tau)} \leq e^{\beta_4^{(k)} (t-\tau)} \;\; \mbox{for $\tau \leq t$}.$$
Note also that
for $\ell=1,3$ and any fixed $t>T$,
$$\Big|\int_t^\infty e^{\beta_\ell^{(k)} (t-\tau)} {\tilde g}_j^k (\tau) d \tau \Big| \leq \int_t^\infty e^{\beta_\ell^{(k)} (t-\tau)} o_\tau (1) |z_j^k (\tau)| d \tau$$
and for $\ell=2,4$,
$$ \Big| \int_T^t e^{\beta_\ell^{(k)} (t-\tau)} {\tilde g}_j^k (\tau) d \tau \Big| \leq \int_T^t e^{\beta_\ell^{(k)} (t-\tau)} o_\tau (1) |z_j^k (\tau)| d \tau.$$

It follows from \eqref{3.14-3} and arguments similar to those
in \cite{GHZ} that
\begin{equation}
\label{3.14-4}
|z_j^k (t)|=O(k e^{\beta_4^{(k)} (t-T)})
\end{equation}
for $t \in (T, \infty)$.
This implies that our claim \eqref{3.14-1} holds for $z_j^k (t) \not \equiv 0$.
Therefore, our claim \eqref{3.14-1} holds.

We now establish the estimate of $z_j^1(t)$ with $1 \leq j \leq m_1$.

We first consider the estimate for $N=4$, which can be split to four cases:
(i) $p \in (1,3]$; (ii) $p \in (3,7)$; (iii) $p\in (7,\infty)$ and (iv) $p=7$.

For the case (i), it is known from Claim 4 that $\beta_2^{(1)}<\beta_4^{(1)}\le -1=\beta_3^{(1)}<0<\beta_1^{(1)}$. The fact $z_j^1 (t) \to 0$ as $t\to \infty$ implies that $z_j^1 (t)$ can be written in the form
\begin{eqnarray*}
z_j^1 (t) &=& A_{j,2}^1 e^{\beta_2^{(1)}t}+ A_{j,3}^1 e^{-t}+A_{j,4}^1 e^{\beta_4^{(1)}t} \\
& &\;\;\;\;-B_1^1 \int_t^{\infty} e^{\beta_1^{(1)}(t-s)} {\tilde g}_j^1 (s) d s+B_2^1 \int_T^t e^{\beta_2^{(1)}(t-s)} {\tilde g}_j^1 (s) d s\\
& &\;\;\;\;+B_3^1 \int_T^t e^{-(t-s)} {\tilde g}_j^1 (s) d s+B_4^1 \int_T^t e^{\beta_4^{(1)}(t-s)} {\tilde g}_j^1 (s) d s.
\end{eqnarray*}
Arguments similar to those in the proof of \eqref{3.14-1} imply that, for $1 \leq j \leq m_1$ and $t>T$,
\begin{equation}
\label{3.16}
|z_j^1 (t)|=O(e^{-(t-T)}).
\end{equation}

For the case (ii), we see from Claim 4 that $\beta_2^{(1)}<\beta_4^{(1)}=-1<\beta_3^{(1)}<0<\beta_1^{(1)}$. Therefore,
\begin{eqnarray*}
z_j^1 (t)&=& A_{j,2}^1 e^{\beta_2^{(1)}t}+ A_{j,3}^1 e^{\beta_3^{(1)}t}+ A_{j,4}^1 e^{-t} \\
& &\;\;\;\;-B_1^1 \int_t^{\infty} e^{\beta_1^{(1)}(t-s)} {\tilde g}_j^1 (s) d s + B_2^1 \int_T^t e^{\beta_2^{(1)}(t-s)} {\tilde g}_j^1 (s) d s \\
& &\;\;\;\;+B_3^1 \int_T^{t} e^{\beta_3^{(1)}(t-s)} {\tilde g}_j^1 (s) d s +B_4^1 \int_T^t e^{-(t-s)} {\tilde g}_j^1 (s) d s.
\end{eqnarray*}
Similarly, we have that, for $1 \leq j \leq m_1$ and $t>T$,
\begin{equation}
\label{3.16-1}
|z_j^1 (t)|= O(e^{\beta_3^{(1)}(t-T)}).
\end{equation}

For the case (iii), Claim 4 shows us that $\beta_2^{(1)}<\beta_4^{(1)}=-1<0<\beta_3^{(1)}<\beta_1^{(1)}$.
Then,
\begin{eqnarray*}
z_j^1 (t) &=& A_{j,2}^1 e^{\beta_2^{(1)}t}+ A_{j,4}^1  e^{-t} \\
& &\;\;\;\;-B_1^1 \int_t^{\infty} e^{\beta_1^{(1)}(t-s)} {\tilde g}_j^1 (s) d s-B_3^1 \int_t^{\infty} e^{\beta_3^{(1)}(t-s)} {\tilde g}_j^1 (s) d s\\
& &\;\;\;\;+B_2^1 \int_T^t e^{\beta_2^{(1)}(t-s)} {\tilde g}_j^1 (s) d s+B_4^1 \int_T^t e^{-(t-s)} {\tilde g}_j^1 (s) d s.
\end{eqnarray*}
By the method analogous to that used above, for $1 \leq j \leq m_1$ and $t>T$, we get
\begin{equation}
\label{3.16-2}
|z_j^1 (t)|=O(e^{-(t-T)}).
\end{equation}

For the case (iv), we know that $\beta_2^{(1)}<\beta_4^{(1)}=-1<\beta_3^{(1)}=0<\beta_1^{(1)}$. Then
\begin{eqnarray*}
z_j^1 (t)&=& A_{j,2}^1 e^{\beta_2^{(1)}t}+ A_{j,4}^1 e^{-t} \\
& &\;\;\;\;-B_1^1 \int_t^{\infty} e^{\beta_1^{(1)}(t-s)} {\tilde g}_j^1 (s) d s-B_3^1 \int_t^{\infty}  {\tilde g}_j^1 (s) d s \\
& &\;\;\;\;+B_2^1 \int_T^t e^{\beta_2^{(1)}(t-s)} {\tilde g}_j^1 (s) d s+B_4^1 \int_T^t e^{-(t-s)} {\tilde g}_j^1 (s) d s.
\end{eqnarray*}
Similarly, we have that, for $1 \leq j \leq m_1$ and $t>T$,
\begin{equation}
\label{3.16-3} |z_j^1 (t)|=O(e^{-(t-T)})+C\int_t^{\infty}
|o_s(1)z_j^1 (s)| d s,
\end{equation}
where $C>0$ which depends only on $B^1_1$ and $B^1_3$ but
independent of $T$. Let
$$ K(t)=\int_t^{\infty} |z_j^1(s)| d s.$$
Then we can obtain that $K (t)$ is bounded provided that the
condition in (\ref{1.6}) holds. In fact, it follows from \eqref{1.6}
that there is $0<\varepsilon_0<\frac{1}{10}$ such that, for $s$ near
0,
$$|w(s,\theta)|^2 \le C s^{2\varepsilon_0}.$$
Consequently,
\begin{eqnarray*}
K(t)&=&\int_0^s \zeta^{-1} |w_1(\zeta)| d\zeta \leq \int_0^s \zeta^{-1} \Big(\int_{S^{N-1}}w^2(\zeta,\theta) d\theta \Big)^{\frac{1}{2}} d\zeta \\
&\le& C \int_0^s \zeta^{\varepsilon_0-1} d\zeta = \frac{C}{\varepsilon_0} e^{-\varepsilon_0 t} <\infty,
\end{eqnarray*}
which implies that
for any $0<\epsilon<\varepsilon_0/C$,
$$\lim_{t \to \infty} e^{C \epsilon t} K(t)=0.$$
On the other hand, it follow from the definition of $K(t)$ that for
$t$ sufficiently large,
$$-K' (t)=|z_j^1(t)| \leq O(e^{-(t-T)})+C \epsilon K(t).$$
We can easily see that
$$K(t)=O(e^{-(t-T)}).$$
This and \eqref{3.16-3} imply
\begin{equation}
\label{} |z_j^1(t)|=O(e^{-(t-T)}).
\end{equation}

Now, let ${\hat W}(t)=\Sigma_{k=1}^\infty \Sigma_{j=1}^{m_k} |z_j^k (t)|$. Then ${\tilde W}(t)=\Big(\Sigma_{k=1}^\infty \Sigma_{j=1}^{m_k} (z_j^k)^2 (t) \Big)^{1/2} \leq {\hat W}(t)$. For the cases (i), (iii) and (iv), we see from \eqref{3.14-1}, \eqref{3.16}, \eqref{3.16-2}, \eqref{3.16-3} that
\begin{equation}
\label{3.16-4} {\tilde W}(t) \leq {\hat W}(t) \leq O \Big(e^{-t}
\Big)+O \Big(\sum_{k=2}^\infty k m_k e^{\beta_4^{(k)} (t-T)} \Big).
\end{equation}
Let $T^*=10 T$. We obtain that, for $t>T^*$,
\begin{equation}
\label{3.17} \sum_{k=2}^\infty k m_k e^{\beta_4^{(k)}
(t-T)}=O(e^{\beta_4^{(2)} (t-T)}).
\end{equation}
To see \eqref{3.17}, we notice that, for any $t>T^*$ (we may enlarge $T^*$),
$$\lim_{k \to \infty} \frac{(k+1) m_{k+1}  e^{\beta_4^{(k+1)} (t-T)}}{k m_k e^{\beta_4^{(k)} (t-T)}}
=e^{-(t-T)} \lim_{k \to \infty} \frac{(k+1) m_{k+1}}{k m_k}=e^{-(t-T)}<\frac{1}{2}.$$
Since $\beta_4^{(2)}<-1$, we easily have that, for $t>T_*$,
\begin{equation}
\label{3.18}
{\tilde W} (t)=O(e^{-t}).
\end{equation}
Let $s_0=e^{-T^*}$. We see from \eqref{3.18} that there exists $C>0$ such that,
\begin{equation}
\label{3.19}
W(s) \leq C s \qquad \mbox{for $0<s<s_0$}.
\end{equation}
Arguments similar to the above imply that we can obtain
\begin{equation}
\label{3.20}
{\tilde W}(t)=O \left(e^{\beta_3^{(1)} t} \right)
\end{equation}
for the case (ii). Note that $\beta_3^{(1)} \in (-1, 0)$ in this case.

For $N=3$ or $N\ge 5$, processing the same procedure as above, we can obtain
\begin{equation}
\label{3.22}
{\tilde W}(t)=\left\{\begin{array}{ll}O(e^{-t}), \;\; &\mbox{for} \left\{\begin{array}{lll}
                                                            p\in (1, \frac{N+2}{6-N}] & & \mbox{when $N=3$ or $5$,}\\
                                                            p\in (1,3]\cup (7,\infty) &&\mbox{when $N=4$},\\
                                                            p=7, && \mbox{when $N=4$ with \eqref{1.6}},\\
                                                            p\in (1, \infty) && \mbox{when $N\ge 6$};\end{array}\right.\vspace{1mm} \\
               O(e^{\beta_3^{(1)}t}), \;\; & \mbox{for $p \in (\frac{N+2}{6-N},p^*)$} \hspace{21mm} \mbox{when $N=3$, $4$ or $5$},
\end{array} \right.
\end{equation}
where $p^*$ is given by \eqref{p*}, $\beta_3^{(1)}=5-N-2\alpha \in (-1,0)$ when $p \in (\frac{N+2}{6-N},p^*)$ and $N=3$, $4$ or $5$.
Choosing
\begin{equation}
\label{beta}
\hat{\beta}=|\beta_3^{(1)}|=N+ 2\alpha-5,\quad \mbox{$p \in (\frac{N+2}{6-N},p^*)$ and $N=3$, $4$ or $5$,}
\end{equation}
we see that $0<\hat{\beta}<1$ in this case. Since $W(s)={\tilde W}(t)$ and $t=-\ln s$, we obtain the conclusions of Proposition \ref{p3.1} from \eqref{3.22}. This completes the proof of Proposition
\ref{p3.1}.
\end{proof}

\section{Estimates for ${\overline v}(s)$, $v(s,\theta)$ near $s=0$ and expansions of $u (r, \theta)$ near $r=\infty$}
\setcounter{equation}{0}

This section is devoted to establish some estimates for $\overline{v}(s)$ and $v(s,\theta)$ near $s=0$ which enable us to obtain expansions of positive entire solutions $u (r, \theta)$ of \eqref{1.1-0} at $r=\infty$.

We begin our analysis by recalling the equation satisfied by $\overline{v}(s)$. From Lemma \ref{l2.3},
we see that
\begin{eqnarray*}
& & {\overline v}^{(4)} -2(N-7+2 \alpha) s^{-1} {\overline v}^{(3)} +\big(N^2+6\alpha N+6\alpha^2-16N-36\alpha+51) s^{-2}
                    {\overline v}'' \\
& &\;\;\;\;\;\;\;-(N-5+2\alpha)(2N\alpha+2\alpha^2-3N-10\alpha+9) s^{-3} {\overline v}'- (p+1)s^{-4} L^{-(p+1)} {\overline v} \\
& & \;\;\;\;=s^{-4} \left[{f(\overline {v})-\overline {f(v)}}\right]-s^{-4} {f(\overline {v})}
\end{eqnarray*}
and
\begin{eqnarray*}
& &\left|f(\overline {v})-{\overline {f(v)}}\right| \leq \frac{1}{\omega_{N-1}}\int_{S^{N-1}}|f(v)-f(\overline{v})| d \theta \\
& &\;\;\;\leq o_s \Big[ \Big(\int_{S^{N-1}}w^2 \Big)^{\frac{1}{2}} \Big] \\
& &\;\;\;=\left \{ \begin{array}{ll} o(s), \;\;&\mbox{for $N$ and $p$ satisfying \eqref{pN} or $p=7$, $N =4$ with \eqref{1.6},}\vspace{1mm}\\
o (s^{\hat{\beta}}), \;\; & \mbox{for $p \in (\frac{N+2}{6-N},p^*)$ and $N=3$, $4$ or $5$}, \end{array} \right.
\end{eqnarray*}
where $\hat{\beta}$ is given in \eqref{beta}.

Let $t=-\ln s$ and $\overline{z}(t)=\overline{v}(s)$. Then $\overline{z}(t)$ satisfies
\begin{equation}
\label{4.1}
\begin{array}{l} \overline{z}^{(4)} +2(N+2\alpha-4)\overline{z}^{(3)}+(N^2+6N\alpha+6\alpha^2-10N-24\alpha+20) \overline{z}''\vspace{1mm}\\
 \;\;\;\;\;\;+2(N+2\alpha-4)(N \alpha +\alpha^2-N -4\alpha+2)\overline{z}'-(p+1) L^{-(p+1)}\overline{z} \vspace{1mm}\\
\;\;\; =\left \{ \begin{array}{ll} -f(\overline{z})+o_t(1)e^{-t}, \;\; &\mbox{for $N$ and $p$ satisfying \eqref{pN}}\\
                                        & \qquad\mbox{or $p=7$ and $N =4$ with \eqref{1.6},}\vspace{1mm}\\
-f({\overline z})+o_t(1) e^{-\hat{\beta} t}, \;\; & \mbox{for $p \in (\frac{N+2}{6-N},p^*)$ and $N=3$, $4$ or $5$}. \end{array} \right.
\end{array}
\end{equation}
The corresponding characteristic polynomial of \eqref{4.1} is
\begin{eqnarray}
\label{4.1-0}
& & \beta^4+2(N+2\alpha-4) \beta^3+(N^2+6N\alpha+6\alpha^2-10N-24\alpha+20) \beta^2 \nonumber \\
& &\;\;\;\;\;\; +2(N+2\alpha-4)(N \alpha +\alpha^2-N -4\alpha+2) \beta-(p+1) L^{-(p+1)}=0.
\end{eqnarray}
Comparing \eqref{4.1-0} with \eqref{3.5}, it is easy to see that the four roots of \eqref{4.1-0} are given by $\beta_j^{(0)}$ corresponding to $\lambda_0 =0$ for $j=1,2,3,4$,
are given in \eqref{3.6}. Denote
\begin{equation}\label{beta-3} \beta_j=\beta_j^{(0)}\quad \mbox{for $j=1,2,3,4$}.\end{equation}
From the expression of $\beta_j$, we have that, for $N =3$ and $p\in (1,3)$; $N\ge 4$ and $p>1$,
\begin{equation}\label{prop-beta-12} \beta_1, \beta_2 \in \R \quad\mbox{and} \quad \beta_2<2-N-\alpha<-1<0<\beta_1.\end{equation}

As to the roots $\beta_3, \beta_4$, we have:

{\it Claim 5.} When $N=3$ and $p\in (1, 3)$; $N\ge 4$ and $p>1$, the following estimates for $\beta_3$ and $\beta_4$ hold:
$$\begin{array}{lll}
\beta_3,  \beta_4 \in \R, \,\,\beta_4 \le \beta_3,\quad \beta_3\left\{ \begin{array}{lll} \le -1, &\mbox{for}& \left\{\begin{array}{lll}
                                                                                                      p\in (1,p_3^1], &&N=3;\\
                                                                                                      p\in (1,p_c], &&N\in [5,12];\\
                                                                                                      p\in(1,\infty), &&N\ge 13;\end{array}\right.\\
                                                                    \in (-1,0),&\mbox{for}& \hspace{5.5mm}p\in [p_3^2,3),\hspace{7.5mm} N=3; \end{array}\right.
\end{array}$$
$$\begin{array}{lll}
\beta_{3,4}= \ell \pm q i \not\in \R, \quad  \ell \left\{\begin{array}{lll} \le -1, &\mbox{for}&  \left\{\begin{array}{lll}
                                                                                             p\in (p_3^1,\frac{5}{3}], &&N=3;\\
                                                                                             p\in (1,3], &&N=4;\\
                                                                                             p\in (p_c,7],&&N=5;\\
                                                                                             p\in(p_c,\infty), &&N\in [6,12];\end{array}\right. \vspace{1mm}\\
                                                                      \in (-1,0),&\mbox{for}&  \left\{\begin{array}{lll}
                                                                                             p\in (\frac{5}{3},p_3^2), &\,\,&N=3;\\
                                                                                             p\in (3,\infty), &&N=4;\\
                                                                                             p\in (7,\infty),&&N=5.\end{array}\right.
                                                   \end{array}\right.
\end{array}$$
Where $p_c$, $p_3^1, p_3^2$ are given in \eqref{pc} and \eqref{4.00} below.

We now introduce the function
$$ \hbar(p, N):=[4+(N-2)^2]^2-16\rho_0,$$
where $\rho_0=\rho_k|_{k=0}$ is given in \eqref{3.7}.
For $N=3$, solving equation $\hbar (p, 3)=0$, we obtain four foots:
\begin{equation}
\label{4.00}
\begin{array}{lll}\displaystyle p_3^1=\frac{5-\sqrt{13-3\sqrt{17}}}{3+\sqrt{13-3\sqrt{17}}},&\quad& \displaystyle p_3^2=\frac{5+\sqrt{13-3\sqrt{17}}}{3-\sqrt{13-3\sqrt{17}}},\vspace{1mm} \\
 \displaystyle p_3^3=\frac{5+\sqrt{13+3\sqrt{17}}}{3-\sqrt{13+3\sqrt{17}}},&& \displaystyle p_3^4=\frac{5-\sqrt{13+3\sqrt{17}}}{3+\sqrt{13+3\sqrt{17}}}.
\end{array}
\end{equation}
It is easy to check that $p_3^3<p_3^4<1<p_3^1<p_3^2<3$. A simple calculation shows $\hbar(1;3)=9>0$. So, we deduce that
$$ \hbar (p, 3) \left\{\begin{array}{lll} \ge 0,  &\quad& \mbox{for}\quad p\in (1,p_3^1]\cup [p_3^2,3),\vspace{1mm}\\
                                        < 0,   && \mbox{for}\quad p\in (p_3^1, p_3^2).
\end{array}\right.$$
This implies that $\beta_3, \beta_4 \in \R$ for $p\in (1,p_3^1]\cup [p_3^2,3)$ and $\beta_3, \beta_4 \not \in \R$ for
$p \in (p_3^1, p_3^2)$.

For $N=3$ and $p\in (p_3^1, p_3^2)$, we have
\begin{equation}
\label{4.3-1} \Re(\beta_3)=\Re(\beta_4)=\frac{1}{2}-\alpha \left\{\begin{array}{lll}\in (-\frac{3}{2},-1]&\,\,\,& \mbox{for $p\in (p_3^1, \frac{5}{3}]$,}\\
                                                                              \in (-1,-\frac{1}{2}) &&  \mbox{for $p\in (\frac{5}{3}, p_3^2)$.}
                                                                              \end{array}\right.
\end{equation}
For $N=3$ and $p\in (1,p_3^1]\cup [p_3^2,3)$, we see from the representations of $\beta_3$ and $\beta_4$ that $\beta_4 < \beta_3$. Moreover,
\begin{equation}
\label{4.3-2}
\beta_4 \le \beta_3<-1, \quad \mbox{for $N=3$ and $p \in (1,p_3^1]$};
\end{equation}
\begin{equation}
\label{4.3-3}
\beta_4 \le \beta_3 \in (-1,0), \quad \mbox{for $N=3$ and $p \in [p_3^2,3)$}.
\end{equation}
To see \eqref{4.3-2}, we have
\begin{equation}
\label{4.3-4}
\beta_3+1=\frac{1}{2}\Big[3-2\alpha+\sqrt{5-4\sqrt{1+p\alpha(2-\alpha)(\alpha^2-1)}}\Big].
\end{equation}
Note that $3-2\alpha<0$ and $5-(3-2\alpha)^2>0$ for $p\in (1,p_3^1]$. Then
$$[5-(3-2\alpha)^2]^2-16[1+p\alpha(2-\alpha)(\alpha^2-1)]=\frac{128(p-1)(p-3)}{(p+1)^2}<0.$$
This implies $\beta_3+1<0$ and thus \eqref{4.3-2} holds.  To see \eqref{4.3-3}, we notice that $3-2\alpha>0$ for $p\in [p_3^2,3)$. It follows from \eqref{4.3-4} that $\beta_3+1>0$,
i.e. $\beta_3>-1$. We also know that, for $p\in [p_3^2,3)$, $1-2\alpha<0$, $5-(1-2\alpha)^2>0$ and
$$[5-(1-2\alpha)^2]^2-16[1+p\alpha(2-\alpha)(\alpha^2-1)]=\frac{128(p+5)(p-1)(p-3)}{(p+1)^3}<0.$$
These imply that $\beta_3<0$. Therefore, \eqref{4.3-3} holds.

For $N=4$ and $p>1$, we have $\hbar(p, 4)=-\frac{1024p(p+3)(p-1)}{(p+1)^4}<0$. This implies that $\beta_3, \beta_4 \not \in \R$.
At this time,
\begin{equation}
\label{4.3-5}
\Re(\beta_3)=\Re(\beta_4)=-\alpha \left\{\begin{array}{lll}\le -1 &\,\,\,& \mbox{for $p\in (1, 3]$,} \\
                                                                              \in (-1,0) &&  \mbox{for $p\in (3,\infty)$.}\end{array}\right.
\end{equation}

For $5\le N\le 12$ and $p>1$, a direct calculations imply that the equation $\hbar(p, N)=0$ has only one root $p_c$ in $(1, \infty)$ and
\begin{equation}
\label{pc}
p_c=\frac{N+2-\sqrt{4+N^2-4\sqrt{N^2+H_N}}}{6-N+\sqrt{4+N^2-4\sqrt{N^2+H_N}}},\quad \mbox{with}\quad H_N=(N(N-4)/4)^2.
\end{equation}
Moreover,
$$\begin{array}{l} \hbar(1, N)=N^2(N-4)^2>0,\\
\hbar(p, N)|_{p=\infty}=(N-4)(N^2-144)+16(N-20)<0.
\end{array}$$
Hence, when $5\le N\le 12$,
$$\hbar (p, N)\left\{\begin{array}{lll}
>0,&\quad&\mbox{for $p\in(1,p_c)$,}\\
=0, &&\mbox{for $p=p_c$,}\\
<0,&& \mbox{for $p\in (p_c,\infty)$,}\\
\end{array}  \right.$$
which implies that $\beta_3, \beta_4 \in \R$ for $p\in (1,p_c]$; $\beta_3|_{p=p_c}=\beta_4|_{p=p_c}=2-\frac{4}{p_c+1}-\frac{N}{2}<-1$ and
$\beta_3, \beta_4 \not \in \R$ for $p\in (p_c, \infty)$.

When $5\le N\le 12$ and $p\in (p_c, \infty)$, $\beta_{3,4}:=\ell \pm q i$ and it is easy to find that
\begin{equation}
\label{4.3-6}
\ell=\Re(\beta_{3,4})=2-\alpha-\frac{N}{2}
\left\{\begin{array}{lll}\le -1 && \mbox{for $p\in (p_c,\infty)$, $N\in [6,12]$,}\\
 && \,\,\,\,\,\,\,\mbox{$p\in (p_c,7]$, \,\,\,\,$N=5$;} \\
        \in(-1,-\frac{1}{2}) && \mbox{for $p>7$, $N=5$.}
        \end{array}\right.
\end{equation}

When $5\le N\le 12$ and $p\in (1, p_c]$, we have
$$
\beta_{3}+1=\frac{1}{2}\Big[6-2\alpha-N+\sqrt{4+(N-2)^2-4\sqrt{\rho_0}}\Big].
$$
Note that, in this case, $6-2\alpha-N<0$,
$$4+(N-2)^2-(6-2\alpha-N)^2=4+4(2-\alpha)(N-4+\alpha)>0,$$
and
\begin{eqnarray}
\label{4.3-7}
& & [4+(N-2)^2-(6-2\alpha-N)^2]^2-16\rho_0 \nonumber \\
& &\;\;\;\;\;\;\;\;=-\frac{16}{(p+1)^2}(N-1)[(4N-19)p^2+(p^2+2p-3)N+10p-3]<0.
\end{eqnarray}
So, we conclude that $\beta_3 \le \beta_4<-1$ for $5\le N\le 12$ and $p\in (1, p_c]$.

When $N\ge 13$ and $p>1$, a simple calculation shows that the equation $\frac{\partial \hbar}{\partial p}=0$ has no
any solution in $(1, \infty)$, which implies that $\beta_3, \beta_4 \in \R$. From the form of $\beta_3+1$ and \eqref{4.3-7}, we
obtain that $\beta_4 \le \beta_3<-1$.

Our claim 5 follows from the above discussions.

In view of (\ref{4.1}), the ODE theory and arguments similar to those in section 3 imply that, for
$N=4$ and $p>1$; $N\in [5,12]$ and $p\in (p_c,\infty)$,
\begin{eqnarray}
\label{rep-z-1}
{\overline z}(t) &=& M_1 e^{\beta_1 t}+A_2 e^{\beta_2 t} + A_3 e^{\ell t} \cos(q t)+A_4 e^{\ell t} \sin(q t) \nonumber \\
&&\;\;\;\;\;\;-B_1\int_t^{\infty} e^{\beta_1(t-s)} h(s,\overline{z}(s))d s+B_3 \int_T^t e^{\ell (t-s)} \cos[q (t-s)] h(s,\overline{z}(s))d s \nonumber \\
&&\;\;\;\;\;\;+ B_2 \int_T^t e^{\beta_2(t-s)} h(s,\overline{z}(s))d s +B_4 \int_T^t e^{\ell (t-s)} \sin[q (t-s)] h(s,\overline{z}(s))d s;
\end{eqnarray}
for $N =3$ and $p\in (1,p_3^1]\cup [p_3^2,3)$; $N\in [5,12]$ and $p\in (1,p_c]$; $N\ge 13$ and $p>1$,
\begin{eqnarray}
\label{rep-z-2}
{\overline z}(t) &=& M_1 e^{\beta_1 t}+A_2 e^{\beta_2 t}+A_3 e^{\beta_3 t}+A_4 e^{\beta_4 t} \nonumber \\
& & \;\;\;\;\;\;-B_1\int_t^{\infty} e^{\beta_1(t-s)}h(s,\overline{z}(s))d s + B_2\int_T^t e^{\beta_2(t-s)} h(s,\overline{z}(s))d s \nonumber \\
& & \;\;\;\;\;\;+ B_3 \int_T^t e^{\beta_3(t-s)} h(s,\overline{z}(s))d s + B_4 \int_T^t e^{\beta_4(t-s)}h(s,\overline{z}(s))d s,
\end{eqnarray}
where $\ell$ and $q$ are given in Claim 5,
$$h(t,\overline{z}(t))=\left \{ \begin{array}{ll} O({\overline z}^2)+o_t (1) e^{-t},\;\;&\mbox{for $N$ and $p$ satisfying \eqref{pN}}\\
 &\;\;\;\; \mbox{or $p=\frac{N+3}{N-5}$ and $N \geq 6$ with \eqref{1.6},}\\
O({\overline z}^2)+o_t (1) e^{-\hat{\beta} t}, \;\; & \mbox{for $p \in (\frac{N+2}{6-N},p^*)$, $N=3, 5$}. \end{array} \right.
$$
Note that $f({\overline z})=O({\overline z}^2)$. Since ${\overline z}(t) \to 0$ as $t \to \infty$,
we see that, for $N = 3$ and $p\in (p_3^1, p_3^2)$; $N=4$ and $p>1$; $N\in [5,12]$ and $p\in (p_c, \infty)$,
$$|\overline{z}(t)| \leq O(e^{\ell t}) +C \int_t^{\infty} e^{\beta_1 (t-s)} |h(s,\overline{z}(s))| d s + C
\int_T^t e^{\ell (t-s)} |h(s,\overline{z}(s))| d s, $$
and, for $N =3$ and $p \in (1,p_3^1]\cup [p_3^2,3)$; $N\in [5,12]$ and $p\in (1,p_c]$; $N\ge 13$ and $p>1$,
$$ |\overline{z}(t)| \leq O(e^{\beta_3 t}) +C\int_t^{\infty} e^{\beta_1(t-s)} |h(s,\overline{z}(s))| d s
+ C\int_T^t e^{\beta_3(t-s)} |h(s,\overline{z}(s))| d s,$$
where $C>0$ is independent of $T$. Note that we have also used the fact $\beta_2<\ell$. Arguments similar to those in
the proof of Proposition \ref{p3.1} imply that
\begin{equation}
\label{4.1-3}
|\overline{z}(t)|=O(e^{\ell t}), \;\;\; \mbox{i.e.} \;\;\;  |\overline{v}(s)|=O(s^{-\ell}), \;\;\;\;\; \mbox{for}\;\;
                           \left\{  \begin{array}{ll} p\in (p_3^1, p_3^2),& N=3; \\
                                                      p\in (1,\infty),& N=4;\\
                                                      p\in(p_c,\infty), & N\in [5, 12]
                           \end{array}\right.
\end{equation}
and
\begin{eqnarray}
\label{4.1-4}
& & |\overline{z}(t)|=O(e^{\beta_3 t}), \nonumber \\
& & \mbox{i.e.} \;\; |\overline{v}(s)|=O(s^{-\beta_3}), \;\;\;\;\;
\mbox{for}\;\;
\left\{  \begin{array}{ll}p\in (1,p_3^1]\cup [p_3^2,3),& N=3; \\
                                                      p\in (1,p_c],& N\in [5,12];\\
                                                      p\in(1,\infty), & N\in [13, \infty).
\end{array}\right.
\end{eqnarray}
The fact $v(s, \theta)={\overline v} (s)+w (s, \theta)$, Proposition \ref{p3.1}, \eqref{4.1-3}, \eqref{4.1-4} and Claim 5 yield that
\begin{equation}
\label{4.1-5}
|v (s, \theta)|=\left\{\begin{array}{ll}
O(s), \;\; &  \begin{array}{l}\mbox{for}\,\,\, p \in (1,\frac{N+2}{6-N}], N=3, 4, 5;\\
                                       \quad\,\,\,\, p\in (1,\infty), N\ge 6;\end{array} \\
O(s^{\hat{\beta}}), \;\; &\; \mbox{for}\,\,\, p\in (\frac{N+2}{6-N},p^*), N=3,4,5;\\
O(s^{-\ell}), \;\; &\; \mbox{for} \,\,\, p\in (7, \infty), N=4;\\
                   &\; \,\,\,\,\,\,\,\,\,\, p=7,  N=4 \,\,\mbox{with \eqref{1.6}}.
\end{array}\right.
\end{equation}
where $\ell=2-\alpha-\frac{N}{2} \in (-1, 0)$ is given in Claim 5 and $\hat{\beta}=|\beta_3^{(1)}|=5-N-2\alpha \in (0,1)$ for $p\in (\frac{N+2}{6-N},p^*)$ and $N=3, 4, 5$, which is given in \eqref{beta}.
We have also used the facts $-1<\ell<\beta_3^{(1)}<0$ for $p\in (\frac{N+2}{6-N},p^*)$ and $N=3, 4, 5$;
$-1<\beta_3<\beta_3^{(1)}<0$ for $p\in [p_3^2, 3)$ and $N=3$.
Moreover, since
$$|f(v)-f({\overline v})|=p \big|(\xi+L)^{-(p+1)}-L^{-(p+1)}\big | |w|=O(|\xi|)|w|,$$
where $\xi (s, \theta)=\gamma w (s, \theta)+ (1-\gamma) {\overline v}(s)$ with $\gamma \in (0,1)$, the estimate similar to \eqref{4.1-5}
yields that
\begin{equation}
\label{4.1-6}
|\xi (s, \theta)|=\left\{\begin{array}{ll}
O(s), \;\; &  \begin{array}{l}\mbox{for}\,\,\, p \in (1,\frac{N+2}{6-N}], N=3, 4, 5;\\
                                       \quad\,\,\,\, p\in (1,\infty), N\ge 6;\end{array} \\
O(s^{\hat{\beta}}), \;\; &\; \mbox{for}\,\,\, p\in (\frac{N+2}{6-N},p^*), N=3,4,5;\\
O(s^{-\ell}), \;\; &\; \mbox{for} \,\,\, p\in (7, \infty), N=4;\\
                   &\;\,\,\,\,\,\,\,\,\,\, p=7,  N=4 \,\,\mbox{with \eqref{1.6}}.
\end{array}\right.
\end{equation}
Therefore
\begin{equation}
\label{4.1-7}
|f(v)-f({\overline v})|=\left\{\begin{array}{ll}
O(s^2), \;\; &  \begin{array}{l}\mbox{for}\,\,\, p \in (1,\frac{N+2}{6-N}], N=3, 4, 5;\\
                                       \quad\,\,\,\, p\in (1,\infty), N\ge 6;\end{array} \\
O(s^{2\hat{\beta}}), \;\; &\; \mbox{for}\,\,\, p\in (\frac{N+2}{6-N},p^*), N=3,4,5;\\
O(s^{1-\ell}), \;\; &\; \mbox{for} \,\,\, p\in (7, \infty), N=4;\\
                   &\; \,\,\,\,\,\,\,\,\,\, p=7,  N=4 \,\,\mbox{with \eqref{1.6}}.
\end{array}\right.
\end{equation}

Consequently, we have the following lemma.

\begin{lem}
\label{l4.1}
Let $v$ be a solution to \eqref{2.6}. Then there exists
$M=M(v)>0$ such that for $p \in (1, \frac{N+2}{6-N}]$ and $N=3$, $4$, $5$; $p\in (1, \infty)$ and $N\ge 6$,
\begin{equation}
\label{4.1-8}
|\overline{v}(s)| \leq M s, \;\; |\overline{v}' (s)| \leq M, \;\; |\overline{v}''(s)| \leq Ms^{-1}
\end{equation}
and
\begin{equation}
\label{4.1-8-1}
\int_{S^{N-1}} v^2(s,\theta) d \theta \leq M s^2  ;
\end{equation}
For $p\in (\frac{N+2}{6-N}, p^*)$ and $N=3, 4, 5$,
\begin{equation}
\label{4.1-9}
|\overline{v}(s)| \leq M s^{{\hat \beta}}, \;\;\; |\overline{v}'(s)| \leq M s^{{\hat \beta}-1}, \;\;\; |\overline{v}'' (s)| \leq M s^{{\hat \beta}-2}
\end{equation}
and
\begin{equation}
\label{4.1-10}
\int_{S^{N-1}} v^2(s,\theta) d \theta \leq M s^{2\hat{\beta}},\qquad \hat{\beta}=N+2\alpha-5 \in (0,1);
\end{equation}
For $p \in (7, \infty)$ and $N=4$; $p=7$ and $N=4$ with \eqref{1.6},
\begin{equation}
\label{4.1-11}
|\overline{v}(s)| \leq M s^{-\ell}, \;\;\; |\overline{v}'(s)| \leq M s^{-(1+\ell)}, \;\;\; |\overline{v}'' (s)|
\leq M s^{-(2+\ell)},
\end{equation}
and
\begin{equation}
\label{4.1-12}
\int_{S^{N-1}} v^2(s,\theta) d \theta \leq M s^{-2 \ell}.
\end{equation}
where $\ell=2-\alpha-\frac{N}{2}<0$.
\end{lem}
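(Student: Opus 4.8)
The plan is to read off all the inequalities of the lemma from the machinery already set up in this section: the change of variables $t=-\ln s$, $\overline{z}(t)=\overline{v}(s)$, under which $\overline{z}$ solves the constant--coefficient fourth--order equation \eqref{4.1} with characteristic polynomial \eqref{4.1-0}; the representations \eqref{rep-z-1}, \eqref{rep-z-2}; the decay rates \eqref{4.1-3}, \eqref{4.1-4}; Claim 5 on the roots $\beta_{3},\beta_{4}$; and Proposition \ref{p3.1} on $W(s)$. All estimates are understood for $s$ in a small interval $(0,s_{0})$, where $v$ is controlled by Lemma \ref{l2.2}. Write $\gamma$ for the governing exponent in each of the three parameter ranges of the lemma: $\gamma=-1$ when $(N,p)$ satisfies \eqref{pN} with $p\le\frac{N+2}{6-N}$ (or $N\ge6$); $\gamma=\beta_{3}^{(1)}=5-N-2\alpha=-\hat{\beta}\in(-1,0)$ when $p\in(\frac{N+2}{6-N},p^{*})$ and $N=3,4,5$; and $\gamma=\ell=2-\alpha-\frac{N}{2}\in(-1,0)$ when $N=4$ with $p\in(7,\infty)$, or $p=7$, $N=4$ under \eqref{1.6}. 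The target in every case is $|\overline{z}^{(j)}(t)|\le Ce^{\gamma t}$ for $j=0,1,2$, because then the chain rule $\overline{v}'(s)=-s^{-1}\overline{z}'(t)$, $\overline{v}''(s)=s^{-2}\big(\overline{z}'(t)+\overline{z}''(t)\big)$ converts these into $|\overline{v}(s)|\le Ms^{-\gamma}$, $|\overline{v}'(s)|\le Ms^{-\gamma-1}$, $|\overline{v}''(s)|\le Ms^{-\gamma-2}$, i.e.\ \eqref{4.1-8}, \eqref{4.1-9}, \eqref{4.1-11}.

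For the bound on $\overline{z}$ itself I would argue range by range. In the first range, Claim 5 and \eqref{prop-beta-12} show that every root of \eqref{4.1-0} entering the decaying solution has real part $\le-1$, while the right--hand side of \eqref{4.1} is $O(\overline{z}^{2})+o_{t}(1)e^{-t}$; hence the homogeneous terms and the forcing integrals in \eqref{rep-z-1}, \eqref{rep-z-2} are all $O(e^{-t})$, so $|\overline{v}(s)|\le Ms$. In the second range, by Claim 5 and the inequalities $-1<\ell<\beta_{3}^{(1)}<0$ and $-1<\beta_{3}<\beta_{3}^{(1)}<0$ recalled just above the lemma, the slowest decaying mode is no slower than $e^{-\hat{\beta}t}$, and the forcing is $O(\overline{z}^{2})+o_{t}(1)e^{-\hat{\beta}t}$, whence $|\overline{z}(t)|=O(e^{-\hat{\beta}t})$ and $|\overline{v}(s)|\le Ms^{\hat{\beta}}$ with $\hat{\beta}=N+2\alpha-5\in(0,1)$ as in \eqref{beta}. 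In the third range $\beta_{3,4}=\ell\pm qi$ with $\ell\in(-1,0)$, so the homogeneous part $e^{\ell t}$ decays more slowly than the $e^{-t}$ forcing and \eqref{4.1-3} gives $|\overline{v}(s)|\le Ms^{-\ell}$; note that $\beta_{2}<\ell$ always, so the $e^{\beta_{2}t}$ mode is negligible throughout.

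For the first two derivatives I would factor the operator in \eqref{4.1-0} into its first--order (and, in the complex case, a second--order) factors and integrate them one after another against the forcing, exactly as in the derivation of \eqref{rep-z-1}, \eqref{rep-z-2}. Since $h(t,\overline{z}(t))$ is $O(\overline{z}^{2})$ plus an $o_{t}(1)$ multiple of $e^{-t}$ or $e^{-\hat{\beta}t}$, it is bounded by $Ce^{\gamma t}$ once $|\overline{z}|\le Ce^{\gamma t}$ is known, and each stage produces a function of the same exponential order; this yields $|\overline{z}'(t)|,|\overline{z}''(t)|\le Ce^{\gamma t}$ (equivalently, one may differentiate \eqref{rep-z-1}, \eqref{rep-z-2} directly, or use interior estimates for the linear ODE \eqref{4.1}). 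Finally, for the $L^{2}$ bounds \eqref{4.1-8-1}, \eqref{4.1-10}, \eqref{4.1-12}, write $v(s,\theta)=\overline{v}(s)+w(s,\theta)$; since $\int_{S^{N-1}}w\,d\theta=0$ one has $\int_{S^{N-1}}v^{2}\,d\theta=\omega_{N-1}\overline{v}(s)^{2}+W(s)^{2}$, and the assertions follow by combining the pointwise bound on $\overline{v}$ just obtained with Proposition \ref{p3.1} ($W(s)\le Cs$ in the first and third ranges, $W(s)\le Cs^{\hat{\beta}}$ in the second), using for the third range that $s^{2}\le s^{-2\ell}$ for small $s$ because $-\ell\in(0,1)$.

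The only real difficulty is the bookkeeping across the many $(N,p)$ sub-cases of Claim 5: for each range one must determine whether $\beta_{3},\beta_{4}$ are real or a conjugate pair $\ell\pm qi$, whether $\beta_{3}$ (resp.\ $\ell$) lies in $(-\infty,-1]$ or $(-1,0)$, and then verify that the exponent asserted in the lemma is the slower of the homogeneous rate and the forcing rate. The point needing genuine care is the borderline cases $\ell=-1$, which occur exactly at $p=\frac{N+2}{6-N}$ (so $N=3$, $p=\frac{5}{3}$; $N=4$, $p=3$; $N=5$, $p=7$): there the forcing $o_{t}(1)e^{-t}$ has the same exponential rate as the oscillatory homogeneous modes $e^{-t}\cos qt$, $e^{-t}\sin qt$, and one must use $q\ne0$ to see that the relevant convolution $\int^{t}e^{-(t-\tau)}\sin\!\big(q(t-\tau)\big)\,o_{\tau}(1)e^{-\tau}\,d\tau$ is still $O(e^{-t})$, so that no logarithmic factor appears and the clean bound $|\overline{v}(s)|\le Ms$ survives.
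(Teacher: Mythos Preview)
Your approach is exactly what the paper intends (its own proof is just ``similar to Lemma 4.1 in [GHZ]''): read off $|\overline{z}(t)|=O(e^{\gamma t})$ from \eqref{4.1-3}, \eqref{4.1-4} and Claim 5, differentiate the representations \eqref{rep-z-1}, \eqref{rep-z-2} (or use interior ODE estimates) to get the same bound for $\overline{z}',\overline{z}''$, convert via the chain rule, and obtain the $L^{2}$ estimates from the orthogonal splitting $\int_{S^{N-1}}v^{2}\,d\theta=\omega_{N-1}\overline{v}^{2}+W^{2}$ together with Proposition \ref{p3.1}. The case-by-case identification of $\gamma$ is correct, including the observation that in the third range $s^{2}\le s^{-2\ell}$.

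The one point that is not quite settled is your handling of the resonant endpoints $\ell=-1$ (i.e.\ $p=\tfrac{N+2}{6-N}$, $N=3,4,5$). Using the oscillation $q\ne0$ to control $e^{-t}\int_T^{t}\sin\!\big(q(t-\tau)\big)\,o_{\tau}(1)\,d\tau$ requires some extra regularity of the $o_{\tau}(1)$ factor (bounded variation, or $\phi'\in L^{1}$) that has not been established; for a generic $\phi(\tau)\to0$ this integral need not stay bounded. The clean way---and effectively what the paper prepares in \eqref{4.1-5}--\eqref{4.1-7} just before the lemma---is a short bootstrap: the pointwise bound $\max_{\theta}|w(s,\theta)|\le Cs$ (equation \eqref{4.1-a}, obtained from the mode estimates \eqref{3.14-1} and the $D_{k}$-summation argument in the proof of Proposition \ref{p4.2}, independently of any $\overline{v}$ bound) upgrades the cross term $|f(\overline v)-\overline{f(v)}|$ from $o_{s}(1)\cdot W(s)$ to $O\big((|\overline v|+s)\,s\big)$. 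In the $t$-variable this replaces the forcing $o_{t}(1)e^{-t}$ by $O(|\overline{z}|)e^{-t}+O(e^{-2t})$, and the resonant convolution becomes $\int_T^{t}e^{-(t-\tau)}\big(O(|\overline z(\tau)|)e^{-\tau}+O(e^{-2\tau})\big)\,d\tau=O(e^{-t})$, giving the clean bound $|\overline v(s)|\le Ms$ without a logarithm.
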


\begin{proof}
Proof of this lemma is similar to that of Lemma 4.1 in \cite{GHZ}. We omit the details here.
\end{proof}

\begin{prop}
\label{p4.2} Suppose that $\kappa \geq 0$ is an integer and $v$ is a solution of \eqref{2.6}. Then there exist $0<s_0<1$
and $M=M(v,\kappa)>0$ $($independent of $s$$)$ such that for $p \in (1, \frac{N+2}{6-N}]$ and $N=3$, $4$, $5$; $p\in (1, \infty)$ and $N\ge 6$,
\begin{equation}
\label{4.1-13}
\max_{|y|=s} |D^\kappa v(y)| \leq M s^{1-\kappa}.
\end{equation}
For $p\in (\frac{N+2}{6-N}, p^*)$ and $N=3, 4, 5$,
\begin{equation}
\label{4.1-14}
\max_{|y|=s} |D^\kappa v(y)| \leq M s^{{\hat \beta}-\kappa}.
\end{equation}
For $p \in (7, \infty)$ and $N=4$; $p=7$ and $N=4$ with \eqref{1.6},
\begin{equation}
\label{4.1-15}
\max_{|y|=s} |D^\kappa v(y)| \leq M s^{-\ell-\kappa}.
\end{equation}
\end{prop}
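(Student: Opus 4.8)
The plan is to upgrade the integral bounds on $\overline{v}$ and $w$ from Lemma \ref{l4.1} into pointwise bounds on all derivatives of $v$ via interior elliptic estimates applied to equation \eqref{2.6}, combined with a scaling (dilation) argument to capture the correct power of $s$. First I would fix $s \in (0, s_0)$ small and work on the dyadic annulus $A_s := \{ y : s/2 < |y| < 2s \}$. On such an annulus the operator in \eqref{2.6} is a fourth-order uniformly elliptic operator with smooth coefficients that, after the natural rescaling $y = s\,\tilde y$, becomes an operator with coefficients bounded uniformly in $s$ on the fixed annulus $\tilde A := \{1/2 < |\tilde y| < 2\}$; the lower-order potential term $(p+1)s^{-4}L^{-(p+1)}v$ and the nonlinear term $s^{-4}f(v)$ scale homogeneously and, by \eqref{4.1-7} (equivalently $f(v)=O(|v|^2)$ together with the already-established size of $v$), contribute only higher-order perturbations. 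Interior $L^\infty$ (or $W^{\kappa,q}$ followed by Sobolev embedding) estimates for the rescaled equation then give
\[
\max_{|\tilde y| = 1} |D^\kappa \tilde v(\tilde y)| \leq C \Big( \| \tilde v \|_{L^2(\tilde A)} + \| \text{forcing} \|_{L^2(\tilde A)} \Big),
\]
where $\tilde v(\tilde y) = v(s\tilde y)$. Undoing the scaling converts $D^\kappa$ into $s^{\kappa} D^\kappa$ and $\| \tilde v \|_{L^2(\tilde A)}$ into $s^{-N/2}\| v \|_{L^2(A_s)}$, which by Lemma \ref{l4.1} (the bounds \eqref{4.1-8-1}, \eqref{4.1-10}, \eqref{4.1-12}, integrated over the annulus so the radial weight contributes $s^{N}$) is $O(s)$, $O(s^{\hat\beta})$, or $O(s^{-\ell})$ respectively in the three regimes. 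This yields exactly \eqref{4.1-13}, \eqref{4.1-14}, \eqref{4.1-15}.

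In more detail: I would first treat $\kappa = 0$ and $\kappa = 1$, where Lemma \ref{l4.1} already supplies $|\overline v|$, $|\overline v'|$, $|\overline v''|$ bounds and the $L^2(S^{N-1})$ bound on $v$ itself, so one only needs the Sobolev/elliptic step to pass from $L^2$-in-$\theta$ control plus the ODE structure to genuine pointwise control; for $\kappa \geq 2$ I would run an induction on $\kappa$, differentiating \eqref{2.6} (or rather bootstrapping on the rescaled equation) and feeding the freshly obtained bound on $D^{\kappa-1} v$ into the forcing term, so that at each stage the right-hand side of the rescaled equation stays bounded in the appropriate norm uniformly in $s$. The nonlinear term requires care but is benign: since $f(v) = O(|v|^2)$ and $v = o(1)$, on $A_s$ we have $s^{-4} f(v) = O(s^{-4} \cdot s^{2\gamma})$ with $\gamma \in \{1, \hat\beta, -\ell\}$, and $-\ell < 1$ so the worst case is $p \in (7,\infty)$, $N = 4$; one checks $s^{-4+2(-\ell)}$ against the target size $s^{-\ell - 4}$ (the natural scaling weight for a fourth-order operator forcing term), and $2(-\ell) \geq -\ell$ since $-\ell > 0$, so the nonlinear term is indeed lower order. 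The same verification goes through in the other two regimes.

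The main obstacle I anticipate is not the elliptic machinery itself but bookkeeping the degenerate/singular coefficients of \eqref{2.6} near $s = 0$: the coefficients blow up like $s^{-1}, s^{-2}, s^{-3}, s^{-4}$, so one must be scrupulous that after the dilation $y = s\tilde y$ every such factor is exactly absorbed and the rescaled operator is genuinely uniformly elliptic with coefficients bounded independently of $s$ on $\tilde A$ — this is where the precise homogeneities of the Kelvin transform pay off. A secondary subtlety is that the constant $M$ must be shown independent of $s$; this follows because the rescaled annulus $\tilde A$ and the rescaled operator's ellipticity and coefficient bounds are all $s$-independent, so the elliptic constant $C$ is $s$-independent, and the only $s$-dependence left is the explicit power coming from Lemma \ref{l4.1}. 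Since the analogous statement is Proposition 4.2 (or its analogue) in \cite{GHZ} and the only new feature here is tracking the two distinct exponent regimes governed by $(p,N)$, I would, as with Lemma \ref{l4.1}, present the scaling setup and the $\kappa=0,1$ base cases in full and indicate that the induction on $\kappa$ proceeds exactly as in \cite{GHZ}.
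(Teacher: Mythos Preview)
Your approach is correct but takes a genuinely different route from the paper. The paper does not invoke a dilation/interior-elliptic-regularity argument at all; instead it works directly with the spherical-harmonic expansion $w(s,\theta)=\sum_{k\ge 1}\sum_{j} w_j^k(s)Q_j^k(\theta)$ and the mode-by-mode decay $|z_j^k(t)|=O\bigl(ke^{\beta_4^{(k)}(t-T)}\bigr)$ already obtained inside the proof of Proposition~\ref{p3.1} (see \eqref{3.14-1}). Using the sup-norm bounds \eqref{2.4-1}--\eqref{2.4-2} on the eigenfunctions $Q_j^k$ and their derivatives, one sums $\sum_{k,j} D_k|z_j^k(t)|$ (respectively $\sum_{k,j} D_k|(z_j^k)'(t)|$, $\sum_{k,j} E_k|z_j^k(t)|$, \dots) to get pointwise control of $w$, $w_s$, $s^{-1}w_\theta$, and so on, and combines this with the $\overline v$ estimates of Lemma~\ref{l4.1}. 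Your scaling/bootstrap argument is cleaner and would transfer verbatim to any equation with the same homogeneity structure, whereas the paper's argument is more hands-on but extracts the result directly from the ODE analysis of Section~3 without any appeal to black-box regularity theory, and makes explicit that the $k=1$ mode is what saturates the bound.
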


\begin{proof}
We only show \eqref{4.1-13}. The proofs of \eqref{4.1-14} and \eqref{4.1-15} are similar. We first obtain \eqref{4.1-13} for the case of $\kappa=0$. If we define
$z (t, \theta)=w(s, \theta) =\Sigma_{k=1}^\infty \Sigma_{j=1}^{m_k} w_j^k (s) Q_j^k (\theta)$, we see that
$$\max_{\theta \in S^{N-1}} |z(t, \theta)| \leq \sum_{k=1}^\infty \sum_{j=1}^{m_k} |z_j^k (t)| \max_{\theta \in S^{N-1}} |Q_j^k (\theta)| \leq \sum_{k=1}^\infty \sum_{j=1}^{m_k} D_k |z_j^k (t)|,$$
where $D_k$ is given in \eqref{2.4-1}. Arguments similar to those in the proof of Proposition \ref{p3.1} imply that
there exist $C>0$ independent of $t$ and $T^* \gg 1$ such that, for $t \geq T^*$,
$$\sum_{k=1}^\infty \sum_{j=1}^{m_k} D_k |z_j^k (t)|
=O\Big(\sum_{k=2}^\infty k m_k D_k e^{\beta_4^{(k)} (t-T)} \Big)+O \Big(e^{-(t-T)} \Big) \leq C e^{-t}$$
(note that $\lim_{k \to \infty} \frac{(k+1) m_{k+1} D_{k+1}}{k m_k D_k}=1$) and hence
$$\max_{\theta \in S^{N-1}} |z(t, \theta)| \leq C e^{-t},$$
\begin{equation}
\label{4.1-a}
\max_{\theta \in S^{N-1}} |w(s, \theta)| \leq  C s
\end{equation}
for $0<s<s_0:=e^{-T^*}$.
Therefore, \eqref{4.1-13} with $\tau=0$ can be obtained from \eqref{4.1-a} and the fact that $v(s, \theta)=w(s, \theta)+{\overline v}(s)$.

To obtain \eqref{4.1-13} completely, it is enough to show \eqref{4.1-13} for $\kappa=1$. The other cases are essentially the same by differentiating $w(s, \theta)$. We only need to show
$|\nabla w (y)| \leq C$. Since $|\nabla w|^2=w_s^2+\frac{1}{s^2} |w_\theta|^2$, we need to present the estimates of $w_s^2$ and $|w_\theta|^2$. We see that
$w_s (s, \theta)=\Sigma_{k=1}^\infty \Sigma_{j=1}^{m_k}  (w_j^k)'(s) Q_j^k (\theta)$, then
\begin{equation}
\label{4.1-b}
\max_{\theta \in S^{N-1}} |w_s (s, \theta)| \leq \sum_{k=1}^\infty \sum_{j=1}^{m_k} D_k |(w_j^k)'(s)|.
\end{equation}
For each $\lambda_k=k (N+k-2)$ and $1 \leq j \leq m_k$, we see from the expression of $z_j^k (t)$ in \eqref{3.14-0} and $(w_j^k)' (s)=-(z_j^k)' (t) e^t \; (t=-\ln s)$ that for $0<s<s_0$,
$$D_k |(w_j^k)'(s)| \leq {\tilde M}_k s^{-(\beta_4^{(k)}+1)} \;\;\; \mbox{for $k \geq 2$}$$
and
$$D_1 |(w_j^1)'(s)| \leq {\tilde M}_1.$$
These and \eqref{4.1-b} imply that there is $M_1=M_1(v,s_0)>0$ independent of $s$ such that, for $s \in (0, s_0)$,
\begin{equation}
\label{4.1-c}
\max_{\theta \in S^{N-1}} |w_s (s, \theta)| \leq M_1.
\end{equation}
Note that $\beta_4^{(k)}+1<0$ for $k \geq 2$. Since $|w_\theta (s, \theta)| \leq \Sigma_{k=1}^\infty \Sigma_{j=1}^{m_k} |w_j^k (s)| |(Q_j^k)_\theta|$,
we also obtain that there exists $M_2=M_2 (v, s_0)>0$ independent of $s$ such that for $s \in (0, s_0)$,
\begin{equation}
\label{4.1-d}
\max_{\theta \in S^{N-1}} |w_\theta (s, \theta)| \leq M_2 s.
\end{equation}
(Again by $\lim_{k \to \infty} \frac{(k+1) m_{k+1} E_{k+1}}{k m_k E_k}=1$.)
Therefore, for $s \in (0, s_0)$,
\begin{equation}
\label{4.1-e}
\max_{|y|=s} |\nabla w (y)|^2=\max_{|y|=s} \Big[w_s^2+\frac{1}{s^2} |w_\theta|^2 \Big] \leq {\hat M},
\end{equation}
where ${\hat M}=M_1^2+M_2^2$. Together with $\eqref{4.1-8}_2$, we see
 that \eqref{4.1-13} holds for $\kappa=1$.
This completes the proof of this proposition.
\end{proof}

To study the properties of $v$, we introduce a new function
\begin{equation}
\label{4.1-16}
\tilde{w}(s,\theta)=\frac{w(s,\theta)}{s}.
\end{equation}
It follows from the above arguments that
\begin{equation}
\label{4.1-17}
|{\tilde w} (s, \theta)|=O(1), \;\;\; \mbox{for} \;\;
\left\{ \begin{array}{l}
p \in \left\{
\begin{array}{lll}
(1,\frac{N+2}{6-N}] &&\mbox{when $N=3$ or $5$,}\\
(1,3]\cup (7,\infty) &&\mbox{when $N=4$,}\\
(1,\infty)&&\mbox{when $N\ge 6$},
\end{array}\right. \\
\mbox{$p=7$ and $N=4$ with
\eqref{1.6}}
\end{array}
 \right.
 \end{equation}

For $p \in (\frac{N+2}{6-N}, p^*)$, $N=3$, $4$ or $5$, from \eqref{1.6-1} we have
$$|v(s, \theta)|=o(s^{\hat{\beta}}) \;\; \mbox{for $s$ near 0},$$
where $\hat{\beta}= |\beta_3^{(1)}| =N+2\alpha-5 \in (0,1)$.
There holds in this case
\begin{equation}
\label{4.1-18} |{\overline v}(s)|=o(s^{\hat{\beta}}), \;\;\; |w(s, \theta)|=o(s^{\hat{\beta}}) \;\; \mbox{for $s$ near 0.}
\end{equation}
Arguments similar to those in the proof of Proposition \ref{p3.1} imply that
$$W(s)=O(s^{|\beta_4^{(1)}|}) \;\; \mbox{for $s$ near 0}.$$
Since $\beta_4^{(1)}=-1$ for $p \in (\frac{N+2}{6-N},p^*)$ and $N=3$, $4$, $5$, we see
that $|\tilde{w} (s, \theta)|=O(1)$ also holds when $p \in (\frac{N+2}{6-N}, p^*)$ and $N=3$, $4$, $5$ with \eqref{1.6-1}.

Taking account of equation \eqref{2.11}, we see that ${\tilde w}(s,\theta)$ satisfies the equation:
\begin{equation}
\label{4.2}
\begin{split}& \partial_s^{4} \tilde{w}
+2(9-2 \alpha-N) s^{-1}\partial_s^3 \tilde{w}\\
&\;\;\;\;\;\;\;\;\;\;\;+(N^2+6\alpha N+6\alpha^2-22N-48\alpha+93) s^{-2} \partial_s^2 \tilde{w} \\
&\;\;\;\;\;\;\;\;\;\;\;-(N+2\alpha-7)(2\alpha N+2\alpha^2-14\alpha-5N+21)s^{-3} \partial_s \tilde{w} \\
&\;\;\;\;\;\;\;\;\;\;\;-\big[(N+2\alpha-5)(2N\alpha+2\alpha^2-3N-10\alpha+9)+(p+1)L^{-(p+1)}\big]s^{-4}\tilde{w} \\
&\;\;\;\;\;\;\;\;\;\;\;+2(N\alpha+\alpha^2-6\alpha-2N+9) s^{-4} \Delta_\theta \tilde{w}+2(7-2\alpha-N) s^{-3} \Delta_\theta (\partial_s \tilde{w}) \\
&\;\;\;\;\;\;\;\;\;\;\;+2 s^{-2} \Delta_\theta (\partial_s^2 \tilde{w})+s^{-4} \Delta_\theta^2 \tilde{w}\\
&\;\;\;=s^{-5}[\overline{f(v)}-f(v)]=s^{-4}[{\overline {f'(\xi (s, \theta)) {\tilde w}}}-f'(\xi (s, \theta)) \tilde{w}].
\end{split}
\end{equation}
Now we write
\begin{equation}
\label{4.3}
\tilde{w}(s,\theta)=\sum_{k=1}^{\infty} \sum_{j=1}^{m_k} \tilde{w}_j^k (s)Q_j^k (\theta),
\end{equation}
where $\tilde{w}_j^k (s)=s^{-1} w_j^k (s)$. It is clear that $\overline{\tilde{w}}=0$.
Then, $\tilde{w}_j^k (s)$ satisfies
\begin{equation}
\label{4.4}
\begin{split} & (\tilde{w}_j^k)^{(4)}
+2(9-2 \alpha-N) s^{-1} (\tilde{w}_j^k)^{(3)} \\
&\;\;\;\;\;\;\;\;\;\;\;+(N^2+6\alpha N+6\alpha^2-22N-48\alpha+93-2\lambda_k) s^{-2} (\tilde{w}_j^k)'' \\
&\;\;\;\;\;\;\;\;\;\;\;-(N+2\alpha-7)(2\alpha N+2\alpha^2-14\alpha-5N+21-2\lambda_k)s^{-3} (\tilde{w}_j^k)' \\
&\;\;\;\;\;\;\;\;\;\;\;+\big[(5-N-2\alpha)(2N\alpha+2\alpha^2-3N-10\alpha+9)-(p+1)L^{-(p+1)} \\
&\;\;\;\;\;\;\;\;\;\;\;\;\;\;\;-2(N\alpha+\alpha^2-2N-6\alpha+9)\lambda_k+\lambda_k^2\big]s^{-4}
\tilde{w}_j^k \\
&\;\;\;\;=s^{-4} o_s (1) \tilde{w}_j^k,
\end{split}
\end{equation}
where $\lambda_k=k(N+k-2)$ and
\begin{equation}
\label{4.5}
o_s (1)=\left\{\begin{array}{ll}
O(s), \;\; &  \begin{array}{l}\mbox{for}\,\,\, p \in (1,\frac{N+2}{6-N}], N=3, 4, 5;\\
                                       \quad\,\,\,\, p\in (1,\infty), N\ge 6;\end{array} \\
o(s^{\hat{\beta}}), \;\; &\; \mbox{for}\,\,\, p\in (\frac{N+2}{6-N},p^*), N=3,4,5\,\,\mbox{with \eqref{1.6-1}};\\
O(s^{-\ell}), \;\; &\; \mbox{for} \,\,\, p\in (7, \infty), N=4;\\
                   &\; \,\,\,\,\,\,\,\,\,\, p=7,  N=4 \,\,\mbox{with \eqref{1.6}}.
\end{array}\right.
\end{equation}

Let $t=-\ln s$ and $\tilde{z}_j^k (t)=\tilde{w}_j^k(s)$. Then $\tilde{z}_j^k (t)$
satisfies the following equation:
\begin{equation}
\label{4.6}
\begin{split} & (\tilde{z}_j^k)^{(4)}
+2(N-6+2\alpha) (\tilde{z}_j^k)^{(3)}
+(N^2+6\alpha N+6\alpha^2-16N-36\alpha+50\\
&\;\;\;\;\;\;\;\;\;\;\;\;
-2\lambda_k) (\tilde{z}_j^k)'' +2(N+2\alpha-6)(N\alpha+\alpha^2-6\alpha-2N+7-\lambda_k)(\tilde{z}_j^k)'\\
&\;\;\;\;\;\;\;\;\;\;\;\;+\big[(5-N-2\alpha)(2N\alpha+2\alpha^2-3N-10\alpha+9)-(p+1) L^{-(p+1)} \\
&\;\;\;\;\;\;\;\;\;\;\;\;\;\;\;\;\;\;-2(N\alpha+\alpha^2-2N-6\alpha+9)\lambda_k+\lambda_k^2\big] \tilde{z}_j^k \\
&\;\;\;\;\;=o_t (1) \tilde{z}_j^k,
\end{split}
\end{equation}
where
\begin{equation}
\label{4.7}
o_t (1)=\left\{\begin{array}{ll}
O(e^{-t}), \;\; &  \begin{array}{l}\mbox{for}\,\,\, p \in (1,\frac{N+2}{6-N}], N=3, 4, 5;\\
                                       \quad\,\,\,\, p\in (1,\infty), N\ge 6;\end{array} \\
o(e^{-\hat{\beta}t}), \;\; &\; \mbox{for}\,\,\, p\in (\frac{N+2}{6-N},p^*), N=3,4,5\,\,\mbox{with \eqref{1.6-1}};\\
O(e^{\ell t}), \;\; &\; \mbox{for} \,\,\, p\in (7, \infty), N=4;\\
                   &\; \,\,\,\,\,\,\,\,\,\, p=7,  N=4 \,\,\mbox{with \eqref{1.6}}.
\end{array}\right.
\end{equation}
The corresponding polynomial of \eqref{4.6} is
\begin{equation}
\label{4.8}
\begin{split} &\tilde{\beta}^4
+2(N-6+2\alpha) \tilde{\beta}^3
+(N^2+6\alpha N+6\alpha^2-16N-36\alpha+50-2\lambda_k) \tilde{\beta}^2 \\
&\;\;\;\;\;\;\;\;\;\;+2(N+2\alpha-6)(N\alpha+\alpha^2-6\alpha-2N+7-\lambda_k)\tilde{\beta} \\
&\;\;\;\;\;\;\;\;\;\;+(5-N-2\alpha)(2N\alpha+2\alpha^2-3N-10\alpha+9)-(p+1) L^{-(p+1)} \\
&\;\;\;\;\;\;\;\;\;\;\;\;\;\;-2(N\alpha+\alpha^2-2N-6\alpha+9)\lambda_k+\lambda_k^2=0.
\end{split}
\end{equation}
Solving this equation, we obtain four roots:
$$\tilde{\beta}^{(k)}_j=\beta^{(k)}_j +1, \;\; j=1,2,3,4,$$
where $\beta^{(k)}_j$ is given in \eqref{3.6}. It follows from Claims 1-4 in the proof of Proposition \ref{p3.1} that
$$\tilde{\beta}^{(k)}_2< \tilde{\beta}^{(k)}_4<0<\tilde{\beta}^{(k)}_3<\tilde{\beta}^{(k)}_1 \;\; \mbox{for $k \geq 2$; $N=3$ and $p\in (1, 3)$; $N\ge 4$ and $p>1$}$$
and
$$\begin{array}{ll} \tilde{\beta}^{(1)}_2< \tilde{\beta}^{(1)}_4=0<\tilde{\beta}^{(1)}_3<\tilde{\beta}^{(1)}_1,
\;\;& \mbox{for $N=3, 5$, $p\in (\frac{N+2}{6-N},p^*)$;}\\
    &\,\,\,\,\,\,\,\,\,\mbox{$N =4$, $p \in (3, \infty)$;} \\
\tilde{\beta}^{(1)}_2< \tilde{\beta}^{(1)}_4 \leq \tilde{\beta}^{(1)}_3=0<\tilde{\beta}^{(1)}_1,
\;\; & \mbox{for $N \geq 6$, $p \in (1, \infty)$;}\\
     & \,\,\,\,\,\,\,\,\,\mbox{$N=3, 4, 5$, $p\in (1, \frac{N+2}{6-N}]$}.
\end{array}
$$
From \eqref{3.14-4} and Claim 3, we see that for $k\ge 2$,
\begin{equation}
\label{4.9}
\lim_{s \to 0} \tilde{w}_j^k (s)=0, \;\; \mbox{for $N =3$ and $p\in (1, 3)$; $N\ge 4$  and $p>1$}.
\end{equation}
Moreover, \eqref{4.1-17} and (\ref{4.1-18}) imply that $|{\tilde w}_j^1 (s)| \; (1 \leq j \leq m_1)$ is bounded for $s$ near 0, that is, $|{\tilde z}_j^1 (t)|$ is bounded
for $t$ near $\infty$, provided that $N$ and $p$ satisfy \eqref{pN} or $N =4$ and $p=7$ with \eqref{1.6}; $N =3, 4, 5$ and $p \in (\frac{N+2}{6-N},p^*)$ with \eqref{1.6-1}. It follows from \eqref{4.6} that, for $t$ sufficiently large,
\begin{equation}
\label{4.11}
\begin{split}
 \tilde{z}_j^1 (t) &= C_j^1 +A_{j,2}^1 e^{\tilde{\beta}_2^{(1)}t}+A_{j,4}^1 e^{\tilde{\beta}_4^{(1)}t}\\
 &\;\;\;-B_1^1 \int_t^{\infty}e^{\tilde{\beta}_1^{(1)}(t-s)}O(e^{-s})\tilde{z}_j^1(s)ds
 +B_2^1 \int_T^t e^{\tilde{\beta}_2^{(1)}(t-s)}O(e^{-s})\tilde{z}_j^1(s)ds \\
 &\;\;\;
    +B_3^1 \int_T^{t} O(e^{-s})\tilde{z}_j^1(s)ds + B_4^1 \int_T^t e^{\tilde{\beta}_4^{(1)}(t-s)} O(e^{-s})\tilde{z}_j^1(s)ds
\end{split}
\end{equation}
for $N \geq 6$ and $p\in (1, \infty)$; $N=3, 4, 5$ and $p\in (1, \frac{N+2}{6-N}]$;
\begin{equation}
\label{4.10}
\begin{split}
 \tilde{z}_j^1 (t) =& C_j^1 +A_{j,2}^1 e^{\tilde{\beta}_2^{(1)}t} \\
 &\;-B_1^1 \int_t^{\infty}e^{\tilde{\beta}_1^{(1)}(t-s)}O(e^{\ell s})\tilde{z}_j^1(s)ds
 -B_3^1 \int_t^{\infty}e^{\tilde{\beta}_3^{(1)}(t-s)}O(e^{\ell s})\tilde{z}_j^1(s)ds \\
 &\;+B_2^1 \int_T^t e^{\tilde{\beta}_2^{(1)}(t-s)}O(e^{\ell s})\tilde{z}_j^1(s)ds
    + B_4\int_T^t O(e^{\ell s})\tilde{z}_j^1(s)ds
\end{split}
\end{equation}
for $N=4$ and $p>7$; $N =4$ and $p=7$ with \eqref{1.6};
\begin{equation}
\label{4.12}
\begin{split}
 \tilde{z}_j^1 (t) &= C_j^1 +A_{j,2}^1 e^{\tilde{\beta}_2^{(1)}t}\\
 &\;\;-B_1^1 \int_t^{\infty}e^{\tilde{\beta}_1^{(1)}(t-s)} o(e^{-\hat{\beta} s})\tilde{z}_j^1(s)ds
 -B_3^1 \int_t^\infty e^{\tilde{\beta}_3^{(1)} (t-s)} o(e^{-\hat{\beta} s})\tilde{z}_j^1(s)ds \\
 &\;\;+B_2^1 \int_T^t e^{\tilde{\beta}_2^{(1)}(t-s)} o(e^{-\hat{\beta} s})\tilde{z}_j^1(s)ds
    + B_4^1 \int_T^t  o(e^{-\hat{\beta} s})\tilde{z}_j^1(s)ds
\end{split}
\end{equation}
for $N =3, 4, 5$ and $p \in (\frac{N+2}{6-N},p^*)$ with \eqref{1.6-1}. These imply ${\tilde w}_j^1 (s) \to C_j$ (a constant, maybe 0) as $s \to 0$. Recalling that $Q_1^1 (\theta), \ldots, Q_{m_1}^1 (\theta)$ are the eigenfunctions corresponding to
$\lambda_1=N-1$, we have that
\begin{equation}
\label{4.13}
\tilde{w}(s,\theta)\to V(\theta) \;\;\mbox{as $s\to 0$,}
\end{equation}
for $N$ and $p$ satisfy \eqref{pN}; $N =4$ and $p=7$ with \eqref{1.6}; $N =3, 4, 5$ and $p \in (\frac{N+2}{6-N},p^*)$ with \eqref{1.6-1}.
Here $V(\theta)$ is $0$ or one of the first eigenfunctions of $-\Delta$ on $S^{N-1}$, i.e.
$$\Delta_{\theta} V+(N-1)V=0, \;\;\; \overline{V}=0.$$
Moreover, it is known from Lemma 8.1 of \cite{Zou} that
\begin{equation}
\label{4.24}
V(\theta)=\theta \cdot x_0
\end{equation}
for some $x_0\in \R^N$ fixed and $\theta=\frac{x}{|x|} \in S^{N-1}$.

Combining what have been discussed above with Lemma \ref{l4.1} and Proposition \ref{p4.2},
we have established the following asymptotic expansions near $y=0$ for solutions of \eqref{2.6}.

\begin{thm}
\label{t4.6} Let $v$ be a solution of \eqref{2.6} and $\tilde{w}$ be given by \eqref{4.1-16}. Suppose that $N$ and $p$ satisfy \eqref{pN}; $N =4$ and $p=7$ with \eqref{1.6}; $N =3, 4, 5$ and $p \in (\frac{N+2}{6-N},p^*)$ with \eqref{1.6-1}.
Then $v(y)=\overline{v}(s) + s \tilde{w}(s,\theta)$ where $\overline{v}(s), \; \tilde{w}(s,\theta)$ have the following properties
and

(i) $\overline{v}$ satisfies
$$|\overline{v}(s)|=O(s), \;\; |\overline{v}'(s)|=O(1), \;\; |\overline{v}''(s)|=O(s^{-1}) $$
for $N =3, 4, 5$ and $p \in (1,\frac{N+2}{6-N}]$; $N\ge 6$ and $p\in (1, \infty)$;
$$|\overline{v}(s)|=O(s^{-\ell}),\;\; |\overline{v}'(s)|=O(s^{-(\ell+1)}), \;\;  |\overline{v}''(s)|=O(s^{-(\ell+2)})$$
for $N=4$ and $p\in (7, \infty)$; $N =4$ and $p=7$ with \eqref{1.6};
$$|\overline{v}(s)|=o(s^{\hat{\beta}}), \;\;|\overline{v}'(s)|=o(s^{\hat{\beta}-1}), \;\; |\overline{v}''(s)|=o(s^{\hat{\beta}-2})$$
for $N =3, 4, 5$ and $p \in (\frac{N+2}{6-N},p^*)$ with \eqref{1.6-1}.

(ii) For any nonnegative integers $\kappa$ and $\kappa_1$, there exists a positive constant $M=M(v,\kappa,\kappa_1)$ such that
$$|s^\kappa D^{\kappa_1}_{\theta} D^{\kappa}_s  \tilde{w}(s,\theta)| \leq M, \;\;\; y \in \mathbf{B}_{s_0}:=\{y: \; |y|<s_0\}, \;\; y \neq 0.$$
Moreover, $\tilde{w}$ satisfies
\begin{equation}
\label{4.14}
\lim_{s \to 0} \tilde{w}(s,\theta)=V(\theta) \;\;\; \mbox{uniformly in $C^\kappa (S^{N-1})$,}
\end{equation}
where $V(\theta)$ is $0$ or one of the first eigenfunctions of $-\Delta_\theta$ on $S^{N-1}$.
\end{thm}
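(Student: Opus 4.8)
The plan is to assemble this theorem from the estimates already established in Sections 3 and 4; essentially no new computation is needed, only a careful organization by parameter regime. First I would observe that the splitting $v(y)=\overline v(s)+s\tilde w(s,\theta)$ is immediate from the definition \eqref{2.9} of $w$ and the definition $\tilde w=w/s$ in \eqref{4.1-16}. Part (i) is then exactly Lemma \ref{l4.1}: its three alternatives \eqref{4.1-8}, \eqref{4.1-11}, \eqref{4.1-9} match, one by one, the three parameter ranges in the statement (namely $p\in(1,\frac{N+2}{6-N}]$ with $N=3,4,5$ or $p\in(1,\infty)$ with $N\ge 6$; $p\in(7,\infty)$ with $N=4$, or $p=7$ and $N=4$ under \eqref{1.6}; and $p\in(\frac{N+2}{6-N},p^*)$ with $N=3,4,5$ under \eqref{1.6-1}).

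For the pointwise bounds in (ii) I would argue exactly as in the proof of Proposition \ref{p4.2}. That argument yields $\max_\theta|D^{\kappa_1}_\theta D^\kappa_s w(s,\theta)|=O(s^{1-\kappa})$ near $s=0$: this is \eqref{4.1-a}, \eqref{4.1-c}, \eqref{4.1-d} for $(\kappa,\kappa_1)=(0,0),(1,0),(0,1)$, and the higher orders follow by further differentiating the expansion \eqref{2.13} of $w$, using the bounds \eqref{2.4-1}--\eqref{2.4-2} on $Q_j^k$ and the root inequalities of Claim 4 and Remark \ref{r3.2}, which guarantee that every exponent $\beta^{(k)}_4$ governing $w_j^k$ obeys $\beta^{(k)}_4\le -1$. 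In the delicate range $p\in(\frac{N+2}{6-N},p^*)$ with $N=3,4,5$ one uses that the stronger hypothesis \eqref{1.6-1} kills the coefficient of the slowly decaying mode $e^{\beta_3^{(1)}t}$, so that still $W(s)=O(s^{|\beta_4^{(1)}|})=O(s)$, as observed after \eqref{4.1-18}. Since $\tilde w=w/s$, Leibniz' rule in $s$ — a $j$-th derivative of $s^{-1}$ contributing a factor comparable to $s^{-1-j}$ — then gives $|s^\kappa D^{\kappa_1}_\theta D^\kappa_s\tilde w|\le M$ on $\mathbf{B}_{s_0}\setminus\{0\}$, as claimed.

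For the limit \eqref{4.14}: when $k\ge 2$, combining \eqref{3.14-4} with Claim 3 (so that $\tilde\beta^{(k)}_4=\beta^{(k)}_4+1<0$) yields $\tilde w_j^k(s)=s^{-1}w_j^k(s)\to 0$, which is \eqref{4.9}; when $k=1$, the integral representations \eqref{4.11}, \eqref{4.10}, \eqref{4.12} — valid in the respective ranges, and using $\tilde\beta^{(1)}_2<0$, $\tilde\beta^{(1)}_4\le 0<\tilde\beta^{(1)}_1$ together with the boundedness of $\tilde w_j^1$ from \eqref{4.1-17}--\eqref{4.1-18} — give $\tilde w_j^1(s)\to C_j$ for constants $C_j$ (possibly $0$). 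Hence $\tilde w(s,\theta)\to V(\theta):=\sum_{j=1}^{m_1}C_j Q_j^1(\theta)$, which is $0$ or a first eigenfunction of $-\Delta_{S^{N-1}}$; by Lemma 8.1 of \cite{Zou} this forces $V(\theta)=\theta\cdot x_0$ for a fixed $x_0\in\R^N$. To upgrade this to convergence in $C^\kappa(S^{N-1})$ I would bound the $\theta$-derivatives up to order $\kappa$ of the tail $\sum_{k\ge 2}\sum_j\tilde w_j^k(s)Q_j^k(\theta)$ by a convergent series of the type $\sum_{k\ge 2}m_k\Lambda_k e^{\beta^{(k)}_4 t}$, with $\Lambda_k$ a polynomial-in-$\lambda_k$ bound on the $C^\kappa$-norm of $Q_j^k$ (in the spirit of \eqref{2.4-1}--\eqref{2.4-2}), which converges and tends to $0$ as $t=-\ln s\to\infty$ because $\beta^{(k)}_4\le\beta^{(2)}_4<-1$ beats the polynomial growth of $m_k\Lambda_k$ — the mechanism already used for \eqref{3.17}.

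The step I expect to be the main obstacle is precisely this last one: promoting the pointwise convergence of the Fourier coefficients $\tilde w_j^k(s)$ to convergence of $\tilde w(s,\cdot)$ in $C^\kappa(S^{N-1})$, since it requires uniform-in-$s$ summability of the spherical-harmonic tails together with all their $\theta$-derivatives up to order $\kappa$. Everything else is bookkeeping: invoking Lemma \ref{l4.1}, Proposition \ref{p4.2}, and the root analysis (Claims 1--5, Remarks \ref{rem3.2} and \ref{r3.2}) in the correct parameter range, and keeping track of which of \eqref{1.6} or \eqref{1.6-1} is in force.
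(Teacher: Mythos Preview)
Your proposal is correct and mirrors the paper's own treatment: the paper does not give a separate proof of Theorem \ref{t4.6} but states it as the summary of the preceding discussion, writing simply that ``combining what have been discussed above with Lemma \ref{l4.1} and Proposition \ref{p4.2}'' yields the theorem. Your organization of which ingredient feeds which conclusion (Lemma \ref{l4.1} for (i), the argument of Proposition \ref{p4.2} for the pointwise bounds in (ii), and \eqref{4.9}--\eqref{4.13} for the limit \eqref{4.14}) is exactly the intended assembly; one small caveat is that for the third range in (i) the $o(s^{\hat\beta})$ bounds are not literally Lemma \ref{l4.1} (which gives $O$) but the refinement recorded in \eqref{4.1-18} under the extra hypothesis \eqref{1.6-1}, which you do cite later.
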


Using transformation \eqref{2.5} and arguments similar to those in the proof of Theorem 5.1 of \cite{GHZ}, we obtain immediately from Theorem \ref{t4.6} that the asymptotic expansions for positive entire solutions of \eqref{1.1-0} at $\infty$.

\begin{thm}
\label{t4.7}
Let $N$ and $p$ satisfy \eqref{pN}; $N =4$ and $p=7$ with \eqref{1.6}; $N =3, 4, 5$ and $p \in (\frac{N+2}{6-N},p^*)$ with \eqref{1.6-1}.
Assume that $u$ is a positive entire solution of \eqref{1.1-0} with \eqref{1.4}. Then $(u, -\Delta u)$ admits the expansion:
\begin{equation}
\label{4.15}
\left \{ \begin{split} &u(x)=r^{\alpha}  \Big[L +\varphi(r)+\frac{\psi(r,\theta)}{r} \Big],\\
&w(x):=-\Delta u(x)=-r^{\alpha-2} \Big[L \alpha (N+\alpha-2)+\varphi_1(r)+ \frac{\psi_1(r,\theta)}{r} \Big],
\end{split} \right.
\end{equation}
where
\begin{equation}
\label{4.16}
\left \{\begin{split}& \varphi_1(r)=r^2 \varphi''+(N+2\alpha-1)r \varphi'+\alpha(N+\alpha-2)\varphi , \\
&\psi_1 (r,\theta)=r^2 \psi_{rr}+(N+2\alpha-3) r \psi_r+(\alpha-1)(N+\alpha-3)\psi+r^{-\alpha}\Delta_{\theta} \psi.
\end{split} \right.
\end{equation}
Furthermore, the following properties for $\varphi, \psi, \varphi_1, \psi_1$ are satisfied:

(i) $\varphi (r)=r^{-\alpha}\overline{u} (r)-L$, and there exist $R_0 \;(:=s_0^{-1})$ and a constant $M=M(u)>0$ such that,
for $N =3, 4, 5$ and $p\in (1,\frac{N+2}{6-N}]$; $N\ge 6$ and $p\in (1, \infty)$,
\begin{equation}
\label{4.17}
|\varphi (r)| \leq M r^{-1}, \;\;\; |\varphi' (r)| \leq Mr^{-2}, \;\;\; |\varphi''(r)| \leq Mr^{-3}\;\;\; \mbox{for $r>R_0$},
\end{equation}
\begin{equation}
\label{4.18}
|\varphi_1' (r)| \leq M r^{-1} \;\; \mbox{for $r>R_0$}.
\end{equation}
For $N=4$ and $p\in (7, \infty)$; $N =4$ and $p=7$ with \eqref{1.6},
\begin{equation}
\label{4.19}
|\varphi (r)| \leq Mr^{\ell}, \;\;\; |\varphi'(r)| \leq Mr^{\ell-1}, \;\;\; |\varphi''(r)| \leq Mr^{\ell-2} \;\;\; \mbox{for $r>R_0$},
\end{equation}
\begin{equation}
\label{4.20}
|\varphi_1'(r)| \leq Mr^{\ell} \;\;\; \mbox{for $r>R_0$}.
\end{equation}
For $N =3, 4, 5$ and $p \in (\frac{N+2}{6-N},p^*)$ with \eqref{1.6-1},
\begin{equation}
\label{4.21}
|\varphi (r)|=o(r^{-\hat{\beta}}), \;\;\; |\varphi'(r)|=o(r^{-\hat{\beta}-1}), \;\;\; |\varphi''(r)|=o(r^{-\hat{\beta}-2}) \;\;\; \mbox{for $r>R_0$},
\end{equation}
\begin{equation}
\label{4.22}
|\varphi_1'(r)|=o(r^{-\hat{\beta}}) \;\;\; \mbox{for $r>R_0$}.
\end{equation}

(ii) Let $\kappa$ and $\kappa_1$ be two non-negative integers. Then there exists a positive constant $M=M(u, \kappa, \kappa_1)$
such that
\begin{equation}
\label{4.23}
|r^\kappa D_{\theta}^{\kappa_1} D_r^\kappa \psi(r,\theta)| \leq M, \;\;\; |\psi_1 (r,\theta)| \leq M \;\;\; \mbox{for $r>R_0$}.
\end{equation}

(iii) Let $\kappa$ be a non-negative integer. Then $\psi(r,\theta)$ tends to $V(\theta)$ uniformly in $C^\kappa (S^{N-1})$
as $r \to \infty$, where $V(\theta)$ is given by (\ref{4.24}).
\end{thm}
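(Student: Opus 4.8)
The plan is to deduce the statement directly from Theorem \ref{t4.6} by pulling everything back through the Kelvin-type transformation \eqref{2.5}; this is essentially a change of variables, together with a small extra argument for one of the decay estimates. First I would note that, since $u$ is a positive entire solution of \eqref{1.1-0} satisfying \eqref{1.4}, the function $v$ defined in \eqref{2.5} solves \eqref{2.6} and satisfies \eqref{2.8} by Lemma \ref{l2.2}, so $v$ falls under the hypotheses of Theorem \ref{t4.6} in each admissible range of $(N,p)$ (using \eqref{1.6} when $p=7$, $N=4$, and \eqref{1.6-1} when $p\in(\frac{N+2}{6-N},p^*)$, $N=3,4,5$). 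Hence $v(y)=\overline v(s)+s\,\tilde w(s,\theta)$ with $s=|y|=r^{-1}$, and $\overline v,\tilde w$ satisfy the bounds and the $C^\kappa$-limit of Theorem \ref{t4.6}. I would then set $\varphi(r):=\overline v(r^{-1})$ and $\psi(r,\theta):=\tilde w(r^{-1},\theta)$. Since $u(r,\theta)=r^\alpha(L+v(s,\theta))$, this gives immediately the first line of \eqref{4.15}, and averaging over $S^{N-1}$ (recall $\overline{\tilde w}=0$) yields $\overline u(r)=r^\alpha(L+\varphi(r))$, i.e. $\varphi(r)=r^{-\alpha}\overline u(r)-L$.

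Next I would compute $-\Delta u$ in polar coordinates. Writing $u=Lr^\alpha+r^\alpha v$ and using that $Lr^\alpha=U_s(r)$ is the singular solution, one has $-\Delta(Lr^\alpha)=-L\alpha(N+\alpha-2)r^{\alpha-2}$, which supplies the leading constant in the second line of \eqref{4.15}. Expanding
$$\Delta(r^\alpha v)=r^{\alpha-2}\Big[r^2 v_{rr}+(N+2\alpha-1)\,r\,v_r+\alpha(N+\alpha-2)v+\Delta_\theta v\Big]$$
and substituting $v=\varphi(r)+r^{-1}\psi(r,\theta)$, the $\theta$-independent terms collect into $\varphi_1(r)$ and the rest into $r^{-1}\psi_1(r,\theta)$, exactly with the expressions in \eqref{4.16}; this gives the second line of \eqref{4.15}. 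Averaging and using $\overline{\psi_1}=0$, I would also record the radial identity $-r^{2-\alpha}\,\overline{\Delta u}(r)=L\alpha(N+\alpha-2)+\varphi_1(r)$.

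To obtain the estimates, I would use the substitution $s=r^{-1}$, under which $\frac{d}{dr}=-s^2\frac{d}{ds}$. The bounds on $\overline v,\overline v',\overline v''$ from Theorem \ref{t4.6}(i) then convert into \eqref{4.17}, \eqref{4.19}, \eqref{4.21} for $\varphi,\varphi',\varphi''$ (with $R_0=s_0^{-1}$); the bounds $|s^\kappa D_\theta^{\kappa_1}D_s^\kappa\tilde w|\le M$ convert into the first bound in \eqref{4.23} for $\psi$, and then the formula for $\psi_1$ in \eqref{4.16} together with these bounds gives $|\psi_1(r,\theta)|\le M$; and the convergence $\tilde w(s,\theta)\to V(\theta)$ in $C^\kappa(S^{N-1})$ as $s\to 0$ becomes $\psi(r,\theta)\to V(\theta)$ in $C^\kappa(S^{N-1})$ as $r\to\infty$, which is part (iii), with $V$ as in \eqref{4.24}.

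The one step that is not a bare change of variables is the bound on $\varphi_1'$ in \eqref{4.18}, \eqref{4.20}, \eqref{4.22}: it is one power of $r$ sharper than what term-by-term differentiation of $\varphi_1=r^2\varphi''+(N+2\alpha-1)r\varphi'+\alpha(N+\alpha-2)\varphi$ would give from \eqref{4.17}--\eqref{4.21}, since that would require a bound on $\varphi'''$. I expect this to be the main (though minor) obstacle, and I would resolve it by going back to the fourth-order ODE \eqref{4.1} for $\overline z(t)=\overline v(e^{-t})$: differentiating its integral representation shows that $\overline z',\overline z'',\overline z'''$ all decay at the same rate as $\overline z$, and translating this through $s=r^{-1}$ and the identity $\varphi_1(r)=-r^{2-\alpha}\,\overline{\Delta u}(r)-L\alpha(N+\alpha-2)$ produces exactly \eqref{4.18}, \eqref{4.20}, \eqref{4.22}. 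This is precisely the argument in the proof of Theorem 5.1 of \cite{GHZ}; once it is in place, the remaining computations are routine and the proof is complete.
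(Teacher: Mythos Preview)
Your proposal is correct and follows essentially the same approach as the paper: the paper's proof consists of a single sentence stating that the result follows immediately from Theorem~\ref{t4.6} via the Kelvin-type transformation \eqref{2.5} together with arguments similar to those in the proof of Theorem~5.1 of \cite{GHZ}, and you have simply spelled out what those arguments are, including the one nontrivial point about $\varphi_1'$ that you correctly trace back to the integral representation of $\overline z$.
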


\vskip1cm
\section{Proofs of Theorems \ref{main}-\ref{main-2}}
\setcounter{equation}{0}

In this section, we present the proofs of Theorems \ref{main}-\ref{main-2} by using the well known moving plane method.

For $\gamma \in \R$, define the hyperplane:
$$\Upsilon_\gamma=\{x=(x_1,x_2,\ldots,x_N) \in \R^N \;|\; x_1=\gamma \}.$$
For any $x \in \R^N$, denote the reflection point of $x$ about $\Upsilon_\gamma$ by $x^{\gamma}$, i.e.
\[x^{\gamma}=(2\gamma-x_1,x_2,\cdots,x_N).\]
We have the following lemma by using Theorem \ref{t4.7}.

\begin{lem}
\label{l5.1} Assume that $N$ and $p$ satisfy \eqref{pN}; $N =4$ and $p=7$ with \eqref{1.6}; $N =3, 4, 5$ and $p \in (\frac{N+2}{6-N},p^*)$ with \eqref{1.6-1}. Let $u$ be a positive entire solution of \eqref{1.1-0}
satisfying \eqref{1.4}, \eqref{1.6} and \eqref{1.6-1} respectively. Then,

(i) if $\gamma^j \in \R \to \gamma$ and $\{x^j\} \to \infty$ with $x_1^j<\gamma^j$, then
\begin{equation}
\label{5.1}
\lim_{j \to \infty} \frac{|x^j|^{2-\alpha}}{\gamma^j-x_1^j} \left[u(x^j)-u((x^j)^\gamma) \right]=-2 \alpha L \gamma-2(x_0)_1,
\end{equation}
where $(x_0)_1$ is the first component of $x_0$ given in \eqref{4.24}.

(ii) Denote $\gamma_0=-\frac{(x_0)_1}{\alpha L}$. Then there exists a constant $M=M(u)>0$ such that
\begin{equation}
\label{5.2}
\frac{\partial u}{\partial x_1} (x) \geq 0,
\end{equation}
if $x_1 \ge \gamma_0+1$ and $|x| \geq M$.
\end{lem}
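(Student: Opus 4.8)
The plan is to feed the asymptotic expansion of $(u,-\Delta u)$ from Theorem~\ref{t4.7} into both assertions and carry out careful Taylor expansions in $r=|x|$ and $\theta=x/|x|$, tracking every remainder uniformly across the three regimes of Theorem~\ref{t4.7} (namely $\varphi=O(r^{-1})$; $\varphi=O(r^{\ell})$ with $\ell\in(-1,0)$; $\varphi=o(r^{-\hat\beta})$ with $\hat\beta\in(0,1)$). Two elementary facts about the reflection $x\mapsto x^{\gamma}$ will be used repeatedly:
\[
|x|^2-|x^{\gamma}|^2=x_1^2-(2\gamma-x_1)^2=4\gamma(x_1-\gamma),\qquad (x-x^{\gamma})\cdot x_0=2(x_1-\gamma)(x_0)_1,
\]
together with $|\theta-\theta^{\gamma}|=O(|x_1-\gamma|/|x|)$ where $\theta^{\gamma}=x^{\gamma}/|x^{\gamma}|$; the analytic inputs are $V(\theta)=\theta\cdot x_0$ from \eqref{4.24}, the uniform (in $\theta$) $C^1(S^{N-1})$–convergence $\psi(r,\cdot)\to V$, and the bounds $|\psi|+|r\,\partial_r\psi|\le M$ of Theorem~\ref{t4.7}.

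For (i) write $r_j=|x^j|$, $\bar r_j=|(x^j)^{\gamma}|$, $\theta^j=x^j/r_j$, $\bar\theta^j=(x^j)^{\gamma}/\bar r_j$. Substituting \eqref{4.15},
\[
u(x^j)-u((x^j)^{\gamma})=L\,(r_j^{\alpha}-\bar r_j^{\alpha})+\bigl(r_j^{\alpha}\varphi(r_j)-\bar r_j^{\alpha}\varphi(\bar r_j)\bigr)+\bigl(r_j^{\alpha-1}\psi(r_j,\theta^j)-\bar r_j^{\alpha-1}\psi(\bar r_j,\bar\theta^j)\bigr).
\]
From $r_j-\bar r_j=\frac{4\gamma(x_1^j-\gamma)}{r_j+\bar r_j}=\frac{2\gamma(x_1^j-\gamma)}{r_j}(1+o(1))$ and the mean value theorem, the first bracket equals $2\alpha L\gamma\,r_j^{\alpha-2}(x_1^j-\gamma)(1+o(1))$. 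In the last bracket one first replaces $\psi$ by $V$, the error being $o\bigl(r_j^{\alpha-2}|x_1^j-\gamma|\bigr)$ by the $C^1$–convergence together with $|\partial_r\psi|\le M/r$; since $V(\theta^j)=(x^j\cdot x_0)/r_j$, the replaced bracket equals $2(x_0)_1\,r_j^{\alpha-2}(x_1^j-\gamma)+o\bigl(r_j^{\alpha-2}|x_1^j-\gamma|\bigr)$. The $\varphi$–bracket is $o\bigl(r_j^{\alpha-2}|x_1^j-\gamma|\bigr)$ in every regime, because in each case the exponent of $r^{\alpha}\varphi(r)$, diminished by one, lies strictly below $\alpha-2$. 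Multiplying by $|x^j|^{2-\alpha}/(\gamma^j-x_1^j)$ and using $(x_1^j-\gamma)/(\gamma^j-x_1^j)\to-1$ (a consequence of $\gamma^j\to\gamma$) produces exactly \eqref{5.1}.

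For (ii) one differentiates \eqref{4.15} in $x_1$ using $\partial_{x_1}r=x_1/r=\theta_1$ and $\partial_{x_1}\theta=(e_1-\theta_1\theta)/r$, and groups the result as
\[
\frac{\partial u}{\partial x_1}(x)=\alpha L\,r^{\alpha-1}\theta_1+r^{\alpha-2}\,\nabla_{\theta}V(\theta)\cdot(e_1-\theta_1\theta)+R(x),
\]
where $R$ collects the contributions of $\varphi,\varphi',\partial_r\psi$ and $\psi-V$ and satisfies $|R(x)|\le C\,r^{\alpha-3}(|x_1|+1)+o(r^{\alpha-2})$ uniformly (using the regime-dependent decay of $\varphi,\varphi'$ and the $C^1$–smallness of $\psi-V$). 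Since $\nabla_{\theta}V(\theta)\cdot(e_1-\theta_1\theta)=(x_0)_1-(x_0\cdot\theta)\theta_1$ and $r\theta_1=x_1$, the displayed main part equals $r^{\alpha-2}\bigl(\alpha L x_1+(x_0)_1\bigr)+O(r^{\alpha-3}x_1)=\alpha L\,r^{\alpha-2}(x_1-\gamma_0)+O(r^{\alpha-3}x_1)$, recalling $\alpha L\gamma_0=-(x_0)_1$. For $x_1\ge\gamma_0+1$ one has $x_1\le(1+|\gamma_0|)(x_1-\gamma_0)$, so every $O\bigl(r^{\alpha-3}(|x_1|+1)\bigr)$ term is bounded by $\tfrac12\alpha L\,r^{\alpha-2}(x_1-\gamma_0)$ once $|x|$ is large; hence $\frac{\partial u}{\partial x_1}(x)\ge\tfrac12\alpha L\,r^{\alpha-2}(x_1-\gamma_0)-o(r^{\alpha-2})\ge 0$ for $|x|\ge M$ with $M=M(u)$ large. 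Equivalently, (ii) follows from (i) by letting the reflection plane approach $x^j$, since then $\frac{u(x^j)-u((x^j)^{\gamma})}{\gamma^j-x_1^j}\to-2\,\partial_{x_1}u(x^j)$.

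The step I expect to be the main obstacle is the uniform control of the remainder $R$ in (ii), and of its analogues in (i): one must check that even when $x_1$ is comparable to $|x|$ the leading term $\alpha L\,r^{\alpha-2}x_1$ still dominates — this is precisely why the statement demands the margin $x_1\ge\gamma_0+1$ and merely $|x|\ge M$ for some possibly large $M$ — and one must justify replacing $\psi$ by $V$, which genuinely needs the $C^1(S^{N-1})$–convergence and the decay $|\partial_r\psi|\le M/r$ of Theorem~\ref{t4.7} rather than plain pointwise convergence. Once this is arranged, reconciling the three cases $\varphi=O(r^{-1})$, $\varphi=O(r^{\ell})$, $\varphi=o(r^{-\hat\beta})$ is routine, since in each of them the $\varphi$–exponent is strictly negative, so the $\varphi$–contributions wash out after multiplication by $|x|^{2-\alpha}$ in (i) and are dominated by the main term in (ii).
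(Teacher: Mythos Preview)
Your argument is correct and follows essentially the same route as the paper: decompose $u(x^j)-u((x^j)^{\gamma})$ into the $L$--, $\varphi$-- and $\psi$--pieces coming from the expansion \eqref{4.15}, handle the first by the mean value theorem and the identity $|x|^2-|x^{\gamma}|^2=4\gamma(x_1-\gamma)$, the last via the $C^1(S^{N-1})$--convergence $\psi\to V$ together with $|r\partial_r\psi|\le M$, and check that the $\varphi$--piece is negligible in each of the three decay regimes. The paper's own proof is little more than a pointer to Lemma~6.2 of \cite{GW3}, plus the remark that in the regimes $\varphi=O(r^{\ell})$ and $\varphi=o(r^{-\hat\beta})$ one replaces the $O(|x^j|^{-1})$ estimate on the $\varphi$--contribution by $O(|x^j|^{\ell})$ and $O(|x^j|^{-\hat\beta})$ respectively---exactly the case distinction you carry out.

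One small point to clean up in part~(i): the claim $(x_1^j-\gamma)/(\gamma^j-x_1^j)\to-1$ does \emph{not} follow from $\gamma^j\to\gamma$ alone (e.g.\ $x_1^j=\gamma-j^{-2}$, $\gamma^j=\gamma+j^{-1}$). The painless fix, which the paper adopts explicitly in the proof of the analogous Lemma~\ref{l7.6}, is to assume without loss of generality that $\gamma^j\equiv\gamma$, since the same computation goes through verbatim with $\gamma^j$ in place of $\gamma$ and one passes to the limit $\gamma^j\to\gamma$ at the end. Your final throwaway sentence deriving (ii) from (i) by ``letting the reflection plane approach $x^j$'' is imprecise for the same reason, but your direct computation of $\partial_{x_1}u$ already gives a complete proof of (ii), so this is harmless.
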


\begin{proof}
For $N=3$, $4$, $5$ and $p \in (1,\frac{N+2}{6-N}]$; $N\ge 6$ and $p>1$, the proof of this lemma is similar to that of Lemma 6.2 of \cite{GW3}.
For $N =4$ and $p>7$; $N=4$ and $p=7$ with \eqref{1.6}; $N =3, 4, 5$ and $p \in (\frac{N+2}{6-N},p^*)$ with \eqref{1.6-1},
we can obtain the conclusions from the decay rates of $\varphi(r)$, $\varphi_1 (r)$, $\psi(r,\theta)$ and $\psi_1(r,\theta)$ in Theorem \ref{t4.7}.
In fact, we only need to replace the estimate:
 $$\frac{1}{|x^j|^{\alpha-2}(\gamma-x_1^j)}\big[\xi(|x^j|)|x^j|^{\alpha}-\xi(|(x^{j^{\gamma}}|)|x^{j^{\gamma}}|^{\alpha}\big]=O(|x^j|^{-1})\to 0\,\,
 \mbox{as $j\to \infty$}
 $$
in the proof of Lemma 6.2 in \cite{GW3},
by $$\begin{array}{l} \displaystyle\frac{1}{|x^j|^{\alpha-2}(\gamma-x_1^j)}\Big[\varphi(|x^j|)|x^j|^{\alpha}-\varphi(|(x^{j})^{\gamma}|)|(x^{j})^{\gamma}|^{\alpha}\Big] \vspace{1mm}\\
=\left\{ \begin{array}{ll} O(|x^j|^{\ell})\to 0 & \mbox{as $j\to \infty$ for $N =4$, $p>7$; $N=4$, $p=7$ with \eqref{1.6};}\\
                               O(|x^j|^{-\hat{\beta}})\to 0  & \mbox{as $j\to \infty$ for $N =3, 4, 5$,  $p \in (\frac{N+2}{6-N},p^*)$ with \eqref{1.6-1}},
                               \end{array}\right.
                               \end{array}
 $$
here we have used \eqref{4.19} and \eqref{4.21}. This completes the proof of this lemma.
\end{proof}

Assume $w(x)=-\Delta u(x)$ and rewrite \eqref{1.1-0} in the following form:
\begin{equation}
\label{5.3}
\left\{\begin{array}{ll}
-\Delta u=w \;\;\; & \mbox{in $\R^N$},\\
-\Delta w=-u^{-p} \;\;\; & \mbox{in $\R^N$}.
\end{array}
\right.
\end{equation}

Let us recall Lemma 4.2 in \cite{Troy} due to Troy. We obtain readily that

\begin{lem}
\label{l5.2} Let $\gamma \in \R$ and $u$ be a positive entire solution of
\eqref{1.1-0}. Suppose that
$$u(x) \leq u(x^{\gamma}), \;\; u(x) \not \equiv u(x^{\gamma}), \;\; w(x) \leq  w(x^{\gamma}) \;\;\; \mbox{if $x_1<\gamma$}.$$
Then
\begin{equation}
\label{5.4}
u(x)< u(x^\gamma), \;\; w(x)< w(x^\gamma) \;\;\; \mbox{if $x_1<\gamma$}
\end{equation}
and
\begin{equation}
\label{5.5}
\frac{\partial u}{\partial x_1}(x) >0, \;\; \frac{\partial w}{\partial x_1} (x)>0, \;\;\; \mbox{on $\Upsilon_\gamma$},
\end{equation}
where $x^{\gamma}$ is the reflection point of $x$ with respect to $\Upsilon_\gamma$.
\end{lem}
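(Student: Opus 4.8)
The plan is to run the classical moving-plane maximum-principle machinery on the cooperative system obtained by reflecting \eqref{5.3}. Set $\Sigma_\gamma:=\{x\in\R^N:x_1<\gamma\}$ and introduce the differences
\[U(x):=u(x^\gamma)-u(x),\qquad W(x):=w(x^\gamma)-w(x),\qquad x\in\overline{\Sigma_\gamma}.\]
Since $u\in C^4(\R^N)$ we have $w=-\Delta u\in C^2(\R^N)$, so $U,W\in C^2(\overline{\Sigma_\gamma})$, and both vanish identically on $\Upsilon_\gamma$ because $x^\gamma=x$ there. The hypothesis reads $U\ge 0$, $W\ge 0$ in $\Sigma_\gamma$ and $U\not\equiv 0$.

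First I would derive the equations. As the reflection across $\Upsilon_\gamma$ is an isometry, $\Delta_x[\phi(x^\gamma)]=(\Delta\phi)(x^\gamma)$, so using \eqref{5.3},
\[-\Delta U=W,\qquad -\Delta W=u(x)^{-p}-u(x^\gamma)^{-p}\quad\mbox{in }\Sigma_\gamma.\]
From $W\ge 0$ we get $-\Delta U\ge 0$; since $U\ge 0$ and $U\not\equiv 0$, the strong maximum principle gives $U>0$ in $\Sigma_\gamma$. Because $t\mapsto t^{-p}$ is strictly decreasing on $(0,\infty)$ and $u(x)\le u(x^\gamma)$, the right-hand side of the $W$-equation is nonnegative, so $W$ is superharmonic and $W\ge 0$. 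One also needs $W\not\equiv 0$: if $W\equiv 0$ then $-\Delta W\equiv 0$ forces $u(x)^{-p}\equiv u(x^\gamma)^{-p}$, hence $u(x)\equiv u(x^\gamma)$ and $U\equiv 0$, a contradiction. The strong maximum principle then yields $W>0$ in $\Sigma_\gamma$, which together with the bound on $U$ establishes \eqref{5.4}.

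For \eqref{5.5} I would apply the Hopf boundary lemma at an arbitrary point $x_*\in\Upsilon_\gamma$. Inscribe a ball $B\subset\Sigma_\gamma$ tangent to $\Upsilon_\gamma$ at $x_*$; the interior-sphere condition is automatic since $\Upsilon_\gamma$ is a hyperplane. On $B$ we have $U>0$, $U(x_*)=0$, and $-\Delta U=W\ge 0$, so Hopf's lemma gives $\partial U/\partial\nu(x_*)<0$, where $\nu=e_1$ is the outward normal to $\Sigma_\gamma$. Since $\partial_{x_1}[u(x^\gamma)]=-(\partial_1 u)(x^\gamma)$ and $x_*^\gamma=x_*$, one has $\partial U/\partial x_1(x_*)=-2(\partial_1 u)(x_*)$, hence $\partial u/\partial x_1(x_*)>0$. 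The same argument applied to $W$ gives $\partial w/\partial x_1(x_*)>0$, proving \eqref{5.5}.

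This is exactly the two-equation version of Troy's Lemma 4.2 in \cite{Troy}, so no genuine difficulty arises; the only points deserving attention are the elementary observation that $W\not\equiv 0$ (which is what makes the second application of the strong maximum principle legitimate) and the fact that, although $\Sigma_\gamma$ is unbounded, both the strong maximum principle and the Hopf lemma are applied on bounded balls, so no growth or decay information on $u$, $w$ at infinity is needed here.
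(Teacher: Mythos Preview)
Your proof is correct and follows exactly the approach the paper has in mind: the paper simply cites Lemma~4.2 of Troy \cite{Troy} and says the conclusion follows readily, while you have spelled out the strong maximum principle/Hopf argument for the reflected cooperative system in full. The only substantive point beyond a direct citation --- namely that $W\not\equiv 0$ follows from $U\not\equiv 0$ via the equation $-\Delta W=u^{-p}-u(x^\gamma)^{-p}$ --- is precisely the detail that makes the Troy lemma applicable here, and you have handled it correctly.
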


As a consequence of Lemma \ref{l5.2}, we have the following result.

\begin{lem}
\label{l5.3}
Let $\gamma \in \R$, $N$ and $p$ satisfy \eqref{pN}; $N =4$ and $p=7$ with \eqref{1.6}; $N =3, 4, 5$ and $p \in (\frac{N+2}{6-N},p^*)$ with \eqref{1.6-1}. Let $u$ be a positive entire solution of \eqref{1.1-0}
satisfying \eqref{1.4}, \eqref{1.6} and \eqref{1.6-1} respectively. If
$$ u(x)\leq u(x^{\gamma}), \;\; u(x) \not \equiv u(x^{\gamma}) \;\;\; \mbox{for $x_1<\gamma$},$$
then
\begin{equation}
\label{5.6}
u(x)< u(x^{\gamma}),\;\;\;  w(x)< w(x^{\gamma}) \;\; \mbox{for $x_1<\gamma$}.
\end{equation}
\end{lem}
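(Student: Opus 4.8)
The plan is to reduce the assertion \eqref{5.6} to Troy's comparison principle, Lemma~\ref{l5.2}. Among the three hypotheses of that lemma only the second comparison, $w(x)\le w(x^\gamma)$ for $x_1<\gamma$, is not assumed here; once it is established, Lemma~\ref{l5.2} applies verbatim and yields the strict inequalities \eqref{5.6} at once. Thus the whole task is to deduce $w(x)\le w(x^\gamma)$ on the half-space $\Sigma_\gamma:=\{x\in\R^N:x_1<\gamma\}$ from the hypothesis $u(x)\le u(x^\gamma)$ there.

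To that end set $W(x):=w(x^\gamma)-w(x)$ on $\overline{\Sigma_\gamma}$; since $u\in C^4(\R^N)$ we have $W\in C^2(\overline{\Sigma_\gamma})$. Because reflection in $\Upsilon_\gamma$ is an isometry, the second equation in \eqref{5.3} (equivalently $\Delta w=u^{-p}$) gives $\Delta W(x)=u(x^\gamma)^{-p}-u(x)^{-p}$ for $x\in\Sigma_\gamma$. As $u>0$ everywhere and $t\mapsto t^{-p}$ is strictly decreasing on $(0,\infty)$, the hypothesis $u(x)\le u(x^\gamma)$ forces $u(x^\gamma)^{-p}\le u(x)^{-p}$, hence $\Delta W\le 0$ on $\Sigma_\gamma$, i.e.\ $W$ is superharmonic. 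Moreover $W\equiv 0$ on $\partial\Sigma_\gamma=\Upsilon_\gamma$, since $x^\gamma=x$ there.

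It remains to prove $W\ge 0$ on $\Sigma_\gamma$, and this is where the asymptotics of Theorem~\ref{t4.7} enter. By \eqref{4.15} one has $w(x)=-r^{\alpha-2}\bigl[L\alpha(N+\alpha-2)+\varphi_1(r)+\psi_1(r,\theta)/r\bigr]$ with $\varphi_1$ growing at most logarithmically (by \eqref{4.18}, \eqref{4.20}, \eqref{4.22}) and $|\psi_1|\le M$ by \eqref{4.23}; since $\alpha-2=\tfrac{2(1-p)}{p+1}<0$, it follows that $w(x)\to 0$ as $|x|\to\infty$, uniformly in $\theta$. A short computation shows $|x^\gamma|\to\infty$ uniformly whenever $|x|\to\infty$ with $x\in\Sigma_\gamma$, so $W(x)\to 0$ as $|x|\to\infty$ in $\Sigma_\gamma$. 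Then a Phragm\'en--Lindel\"of-type argument concludes: given $\epsilon>0$, choose $R$ with $|W|<\epsilon$ on $\Sigma_\gamma\setminus B_R$; applying the minimum principle to the superharmonic function $W$ on the bounded domain $\Sigma_\gamma\cap B_R$ --- whose boundary consists of a portion of $\Upsilon_\gamma$, on which $W=0$, and a portion of $\partial B_R$, on which $W>-\epsilon$ --- gives $W\ge-\epsilon$ on $\Sigma_\gamma\cap B_R$, hence $W\ge-\epsilon$ throughout $\Sigma_\gamma$; letting $\epsilon\to0$ yields $W\ge 0$, that is $w(x)\le w(x^\gamma)$ for $x_1<\gamma$.

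With $u(x)\le u(x^\gamma)$, $u(x)\not\equiv u(x^\gamma)$ and $w(x)\le w(x^\gamma)$ all in hand for $x_1<\gamma$, Lemma~\ref{l5.2} applies and gives \eqref{5.6}. The delicate point is the decay step in the third paragraph: one must verify that the expansion of Theorem~\ref{t4.7} forces $w(x)\to 0$ uniformly in $\theta$ in all three parameter regimes (where the correction carries the exponent $-1$, $\ell$, or $-\hat{\beta}$ respectively), because it is precisely this uniform decay that legitimizes the minimum principle on the unbounded domain $\Sigma_\gamma$; everything else is routine.
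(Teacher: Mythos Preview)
Your proof is correct and follows essentially the same approach as the paper's: compute $\Delta(w(x^\gamma)-w(x))$, use the asymptotic expansion of Theorem~\ref{t4.7} to obtain $w(x)-w(x^\gamma)\to 0$ at infinity, apply the maximum principle to conclude $w(x)\le w(x^\gamma)$ on $\Sigma_\gamma$, and finish with Lemma~\ref{l5.2}. You supply more detail on the decay step and spell out the Phragm\'en--Lindel\"of reduction to bounded domains where the paper simply writes ``the maximum principle yields''; note incidentally that the bounds on $\varphi,\varphi',\varphi''$ in \eqref{4.17}, \eqref{4.19}, \eqref{4.21} already give $\varphi_1$ bounded (indeed decaying) directly from \eqref{4.16}, so your appeal to the derivative bounds \eqref{4.18}, \eqref{4.20}, \eqref{4.22} and the resulting logarithmic estimate, while valid, is slightly weaker than necessary.
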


\begin{proof}
Since $u(x) \leq u(x^{\gamma})$, $u(x) \not \equiv u(x^{\gamma})$ for $x_1<\gamma$,
we deduce that
$$\Delta [w(x)-w(x^\gamma)]=u^{-p} (x)-u^{-p}(x^\gamma) \ge 0 \;\;\; \mbox{if $x_1<\gamma$}.$$
It follows from \eqref{4.15}-\eqref{4.23} that
$$ w(x)-w(x^\gamma) \to 0 \;\;\; \mbox{as $|x| \to \infty$}.$$
Moreover, $w(x)=w(x^\gamma)$ on $\Upsilon_\gamma$. The maximum principle yields
$$w(x)-w(x^\gamma) \leq 0 \;\;\; \mbox{if $x_1<\gamma$}.$$
It follows from Lemma \ref{l5.2} that our conclusions in \eqref{5.6} hold.
 \end{proof}

{\bf Proofs of Theorems \ref{main}, \ref{main-1} and \ref{main-2}}

We first show the sufficiency of these theorems. The main idea of the proof is similar to those in \cite{GHZ, Zou}.
We claim that there exists $\gamma'>0$ such that
\begin{equation}
\label{5.7}
u(x)< u(x^{\gamma}), \;\;\;  w(x)< w(x^{\gamma}) \;\;\; \mbox{for $\gamma \geq \gamma'$ and $x_1<\gamma$}.
\end{equation}
Suppose for contradiction that \eqref{5.7} does not hold. Then by Lemma \ref{l5.3}, there exist two sequences $\{\gamma^j\} \to \infty$ and $\{x^j\}$ with $x^j<\gamma^j$ such that
\begin{equation}
\label{5.8}
 u(x^j) \geq u(y^{j}), \;\;\; y^j=(x^j)^{\gamma^j}, \;\;\;  j=1,2, \ldots.
 \end{equation}
Thanks to $y^j$ tends to $\infty$, we see that $u(y^j)$ tends to infinity. In turn $|x^j| \to \infty$.
By Lemma \ref{l5.1},
we must have
$$ x_1^j \leq \gamma_0 +1=-\frac{(x_0)_1}{\alpha L}+1 \;\;\; \mbox{for $j$ large enough}. $$
Thus, it follows that, for any $\gamma_1>\gamma_0+1$,
$$ u(x^j)\geq u(y^j) \geq u((x^j)^{\gamma_1}) \;\;\; \mbox{for $j$ large},$$
since $(x^j)_1^{\gamma^j} \gg (x^j)_1^{\gamma_1}$ for $j$ large and $u(x) \to \infty$ as $|x|\to \infty$. On the other hand,
using Lemma \ref{l5.1} again, we conclude that
$$ 0 \leq \frac{|x^j|^{2+\alpha}}{\gamma_1-x^j_1} \Big[ u(x^j)-u((x^j)^{\gamma_1}) \Big] \to -2 \alpha L \gamma_1-2(x_0)_1<0,$$
since $x_1^j<\gamma_1$. This is a contradiction and \eqref{5.7} follows.

The rest of the proof is same as that of Theorem 1.1 in \cite{GHZ} and \cite{Zou} for the sufficiency
of Theorems \ref{main}, \ref{main-1} and \ref{main-2}. We omit them here.

We now show the necessity of Theorems \ref{main}, \ref{main-1} and \ref{main-2}. Without loss of generality, we assume $x_*=0$. Then, the necessity of Theorem \ref{main} follows from Proposition 8 of \cite{DFG}. To show the necessity of Theorems \ref{main-1} and \ref{main-2}, we first show
a lemma, which describes the behavior of the unique minimal positive radial entire solution of \eqref{1.1-0} at $\infty$.

\begin{lem}
\label{lm5.4}
Assume $N=3$ and $p\in (1,3)$; $N\ge 4$ and $p\in(1, \infty)$. Let $u\in C^4(\R^N)$ be the minimal positive radial entire solution of \eqref{1.1-0}. Then as $r=|x| \to \infty $, there holds:
$$u(r)=Lr^{\alpha}+\left\{
\begin{array}{ll} O(r^{2-\frac{N}{2}}) & \mbox{for $N=3$ and $p\in (p_3^1,p_3^2)$; $N\in [4,12]$ and $p\in(p_c,\infty)$};\\
    O(r^{-1+\alpha})&  \mbox{for $N\in[5, 12]$ and $p\in (1,p_c)$; $N\ge 13$ and $p\in(1,\infty)$}; \\
    O(r^{\beta_3+\alpha})&  \mbox{for $N=3$ and $p\in [p_3^2,3)$},
\end{array}\right.
$$
where $\beta_3=\frac{1}{2}\Big(1-2\alpha+\sqrt{5-4\sqrt{1+p\alpha(2-\alpha)(\alpha^2-1)}}\Big)$ given by \eqref{beta-3}.
\end{lem}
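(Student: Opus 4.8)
The plan is to transfer the $\overline v$-analysis of Section~4 to the purely radial case, where it collapses to a single linear ODE with a quadratic source, and then to invert the Kelvin transform \eqref{2.5}. Since $u$ is the minimal positive radial entire solution, it equals $u_{a,\tilde b}$ with $a=u(0)$, and by \cite{DFG} it satisfies $\lim_{r\to\infty}r^{-\alpha}u(r)=L$; thus \eqref{1.4} holds and \eqref{2.5} produces $v$ with $v(y)\to 0$ as $|y|\to 0$. Because $u$ is radial, $v=v(s)$ with $s=|y|=1/r$, so in the notation of Section~2 we have $v\equiv\overline v$ and $w\equiv 0$; in particular $\overline{f(v)}=f(\overline v)$, so the error term $f(\overline v)-\overline{f(v)}$ on the right-hand side of the equation for $\overline v$ (see Lemma~\ref{l2.3} and the display preceding \eqref{4.1}) vanishes identically. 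Hence, with $t=-\ln s$ and $\overline z(t)=\overline v(s)$, the function $\overline z$ solves \eqref{4.1} with right-hand side exactly $-f(\overline z)$, where $f(\overline z)=O(\overline z^2)$ near $0$; this is valid in the full range $N=3$, $p\in(1,3)$ and $N\ge 4$, $p>1$, with no appeal to \eqref{pN}.

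Next I would invoke the ODE machinery already set up. The characteristic polynomial of \eqref{4.1} is \eqref{4.1-0}, with roots $\beta_j=\beta_j^{(0)}$; by \eqref{prop-beta-12} and Claim~5 one has $\beta_1>0$, $\beta_2<2-N-\alpha<-1$, and $\Re\beta_3=\Re\beta_4<0$ with $\beta_2<\Re\beta_3$ (since $2-N-\alpha<2-\alpha-\tfrac N2$). As $\overline z(t)\to 0$ and $\beta_1>0$, the coefficient of $e^{\beta_1 t}$ must vanish --- exactly the argument used for $M_{j,1}^k=0$ in the proof of Proposition~\ref{p3.1} --- so $\overline z$ admits the variation-of-constants representation \eqref{rep-z-1} when $\beta_{3,4}=\ell\pm qi\notin\R$ (i.e. $N=3$, $p\in(p_3^1,p_3^2)$; $N=4$, $p>1$, where $p_c=1$; $N\in[5,12]$, $p\in(p_c,\infty)$) and \eqref{rep-z-2} when $\beta_3,\beta_4\in\R$, in both cases with forcing $h(s,\overline z(s))=-f(\overline z(s))=O(\overline z^2)$. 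Running the same Gronwall/iteration argument as in the derivation of \eqref{4.1-3}--\eqref{4.1-4} from these representations --- and noting that, since $h=O(\overline z^2)$ and $\overline z\to 0$, the convolution against $e^{\beta_1(t-s)}$ and the nonlinear forcing contribute only terms of strictly lower order --- one gets that the leading behaviour of $\overline z$ is governed by the slowest-decaying homogeneous mode among $\beta_2,\beta_3,\beta_4$, namely $\beta_3$ in the real case and $e^{\ell t}(\cos qt,\sin qt)$ in the complex case. Therefore $|\overline v(s)|=O(s^{-\ell})$ in the complex case and $|\overline v(s)|=O(s^{-\beta_3})$ in the real case.

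It remains only to unwind \eqref{2.5}: $u(r)=r^{\alpha}\bigl(L+\overline v(1/r)\bigr)=Lr^{\alpha}+r^{\alpha}\overline v(1/r)$. In the complex case, $\ell=\Re\beta_3=2-\alpha-\tfrac N2$ by \eqref{4.3-1}, \eqref{4.3-5} and \eqref{4.3-6}, so $r^{\alpha}\overline v(1/r)=O(r^{\alpha+\ell})=O(r^{2-N/2})$, which is the first line of the lemma ($N=3$, $p\in(p_3^1,p_3^2)$; $N\in[4,12]$, $p\in(p_c,\infty)$). When $N=3$ and $p\in[p_3^2,3)$, $\beta_3\in(-1,0)$ is the number in \eqref{beta-3}, yielding $u(r)=Lr^{\alpha}+O(r^{\beta_3+\alpha})$, the third line. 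In the remaining real cases $N\in[5,12]$, $p\in(1,p_c)$ and $N\ge 13$, $p>1$, Claim~5 gives $\beta_3<-1$, whence $\alpha+\beta_3<\alpha-1$ and $r^{\alpha}\overline v(1/r)=O(r^{\alpha+\beta_3})\subseteq O(r^{\alpha-1})$, the second line. (The same bound $O(r^{\alpha-1})$ also holds at $p=p_c$, where $\beta_3=-1$, and for $N=3$, $p\in(1,p_3^1]$, where again $\beta_3<-1$.)

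Finally, the substance of this proof is all borrowed: the explicit roots \eqref{3.6}, their sign analysis (Claims~1--5), and the Gronwall-type bootstrap turning a representation formula into a sharp decay rate were carried out in Sections~3--4. The only genuinely new point --- and the one I would take care over --- is the reduction in the first paragraph, namely verifying that radiality of $u$ kills the error term in the $\overline v$-equation, so that the estimates \eqref{4.1-3}--\eqref{4.1-4} become available across the entire parameter range $N=3$, $p\in(1,3)$; $N\ge 4$, $p>1$ rather than only under \eqref{pN}. The rest is the bookkeeping of matching $\alpha+\Re\beta_3$ against $\alpha-1$ to split into the three stated cases.
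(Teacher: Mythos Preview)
Your proposal is correct and is essentially the paper's own argument. The paper applies the Emden--Fowler change $r=e^t$, $m(t)=e^{-\alpha t}u(e^t)-L$ directly, obtaining equation~(5.1) for $m$, while you route through the Kelvin transform \eqref{2.5} and then observe that radiality forces $v\equiv\overline v$; but since $t=-\ln s=\ln r$ and $\overline z(t)=\overline v(1/r)=r^{-\alpha}u(r)-L=m(t)$, the two paths produce literally the same ODE with the same quadratic forcing $-f(\overline z)=O(\overline z^2)$, and the remainder of both proofs---invoking the root analysis of Claim~5 and the variation-of-constants representations \eqref{rep-z-1}--\eqref{rep-z-2} to extract the decay rate governed by $\Re\beta_3$---is identical. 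One small correction: at $p=p_c$ the paper records $\beta_3=\beta_4=2-\tfrac{4}{p_c+1}-\tfrac N2<-1$, not $\beta_3=-1$, though this does not affect your conclusion.
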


\begin{proof} In radial coordinate $r=|x|$, \eqref{1.1-0} can be written to: for $r\in(0,\infty)$,
 $$u^{(4)}+\frac{2(N-1)}{r}u'''+\frac{(N-1)(N-3)}{r^2}u''-\frac{(N-1)(N-3)}{r^3}u'=-u^{-p}.$$
For the minimal positive radial entire solution $u(r)$ of \eqref{1.1-0}, we know from \cite{DFG} that it satisfies \eqref{condition}.

Inspired by \cite{FGK,GG,GS,GW,JL,Wang}, we introduce the Emden-Fowler transformation
 $$r=e^t,\quad m(t)=e^{-\alpha t}u(e^t)-L,\quad t\in \R.$$
Under this transformation, \eqref{1.1-0} becomes to
\begin{eqnarray}
\label{5-1}
& & m^{(4)}+2(N+2\alpha-4) m'''+(N^2+6N\alpha+6\alpha^2-10N-24\alpha+20) m''  \nonumber \\
& &\;\;\; +2(N+2\alpha-4)(N \alpha +\alpha^2-N -4\alpha+2) m'-(p+1) L^{-(p+1)}m+g(m)=0,
\end{eqnarray}
where $g(m)=(m+L)^{-p}-L^{-p}+pL^{-(p+1)}m$. Note that \eqref{condition} indicates
$$\lim_{t\to \infty}m(t)=0,$$
so for $|t|$ large enough, $g(m)=O(m^2)$. Comparing \eqref{4.1} with \eqref{5-1}, we
find that they have the same characteristic polynomial \eqref{4.1-0} and the eigenvalues $\beta_j$
($j=1,2,3,4$) given in Section 4. Taking account of the properties of $\beta_j$ given in \eqref{prop-beta-12}
and Claim 5, we obtain the presentations of $m(t)$, which are similar to \eqref{rep-z-1} and \eqref{rep-z-2}
in Section 4 except that $h(s,\bar{z}(s))$ is replaced by $g(w)$. The same arguments imply that
 \begin{equation}
 \label{5.8-1}
|m(t)|=\left\{\begin{array}{ll} O(e^{\ell t}) \;
                           &\mbox{for $p\in (p_3^1, p_3^2)$ and $N=3$;
                                                      $p\in (1,\infty)$ and $N=4$};\\
                                                     & \;\;\;\;\;\; \mbox{$p\in(p_c,\infty)$ and $N\in [5, 12]$};\\
O(e^{\beta_3 t})  &\mbox{for $
                           p\in (1,p_3^1]\cup [p_3^2,3)$ and $N=3$; $p\in (1,p_c]$ and $N\in [5,12]$};\\
                                             & \;\;\;\;\;\;  \mbox{$p\in(1,\infty)$ and $N\in [13, \infty)$}.
                           \end{array}\right.
\end{equation}
Note that $\ell=2-\alpha-\frac{N}{2}$ and $\beta_3=\frac{1}{2}\left(1-2\alpha+\sqrt{5-4\sqrt{1+p\alpha(2-\alpha)(\alpha^2-1)}}\right)$ when $N=3$.
We obtain our
desired results by using \eqref{prop-beta-12} and Claim 5 again.
\end{proof}

We continue to show the necessity of Theorems \ref{main-1} and \ref{main-2}.
It follows from Lemma \ref{lm5.4} that if $u$ is the minimal positive
radial entire solution of \eqref{1.1-0}, then, for $r$ sufficiently large, there holds
\begin{equation}
\label{5.9}
r^{-\alpha} u(r)-L=\left\{\begin{array}{lll} O(r^{\ell}), &\;\;&  \mbox{for $p \in (\frac{5}{3},p_3^2)$ and $N=3$; $p\in(3,7]$ and $N=4$};\\
                                                            &\;\;& \;\;\;\;\; \mbox{$p\in (7,\infty)$ and $N=5$}; \\
                                           O(r^{\beta_3}), &\;\;& \mbox{for $p\in [p_3^2,3)$ and $N=3$}.
                        \end{array}\right.
\end{equation}

On the other hand, we can easily check that, for $p\in [p_3^2, 3)$ and $N=3$,
$$\beta_3<-\hat{\beta}=5-3-2\alpha<0.$$
For $(\frac{5}{3},p_3^2)$ and $N=3$; $p\in(3,7)$ and $N=4$; $p\in (7,\infty)$ and $N=5$,
$$\ell=2-\frac{N}{2}-\alpha<5-N-2\alpha=-\hat{\beta}<0$$
and for $p=7$ and $N=4$,
$$\ell=2-\frac{N}{2}-\alpha=-\frac{1}{2}<-\epsilon_0,$$
where $\epsilon_0$ is given in Theorem \ref{main-1}.
It follows from \eqref{5.9} that for $r$ sufficiently large,
$$r^{-\alpha} u(r)-L=\left\{\begin{array}{lll} o(r^{5-N-2\alpha}), &\;\;&  \mbox{for $p \in (\frac{5}{3},3)$ and $N=3$; $p\in(3,7)$ and $N=4$};\\
                                                            &\;\;& \;\;\;\;\; \mbox{$p\in (7,\infty)$ and $N=5$}; \\
                                           o(r^{-\epsilon_0}), &\;\;& \mbox{for $p=7$ and $N=4$}.\end{array}\right.$$
This completes the proof of the necessity of Theorems \ref{main-1} and \ref{main-2} and then completes the proof of Theorems \ref{main}, \ref{main-1} and \ref{main-2}.
\qed

\vskip1cm
\section{Proof of Theorem \ref{main-3}}
\setcounter{equation}{0}

In this section, we present the proof of Theorem \ref{main-3}. To do this, we first obtain the asymptotic behavior
of a non-minimal positive radial entire solution of \eqref{1.1-0}. We know from \cite{DFG} that when $N=3$ and $1<p<3$; $N \geq 4$ and $p>1$, for any fixed $a>0$ and $\infty>b>{\tilde b}$, \eqref{1.2}
admits a unique non-minimal positive radial entire solution $u_{a, b}(r)$ such that
$$
r^{-2} u_{a,b} (r) \in (A_1, A_2) \;\; \mbox{for $r$ sufficiently large},
$$
where $0<A_1<A_2<\infty$.

The following proposition presents the asymptotic behavior of $u_{a,b} (r)$ at $r=\infty$.

\begin{prop}
\label{6-p1.1}
There exists $d>0$ ($d$ depends on $a$ and $b$) such that, for $r$ near $+\infty$,
\begin{equation}
\label{6.0}
\Delta u_{a,b} (r)= \left \{ \begin{array}{ll} d+O(r^{-\min \{N-2, 2(p-1)\}}), \;\; & \mbox{if $p \neq \frac{N}{2}$},\vspace{1mm}\\
d+O(r^{-(N-2)} \ln r), \;\; &\mbox{if $p=\frac{N}{2}$}, \end{array} \right.
\end{equation}
\begin{equation}
\label{6.00}
r^{-2} u_{a,b} (r)= \left \{ \begin{array}{ll} \frac{d}{2N}+O(r^{-\kappa}), \;\; &\mbox{if $p \neq \frac{N}{2}$ and $\min \{N-2, 2(p-1)\} \neq 2$};\vspace{1mm}\\
\frac{d}{2N}+O \left(r^{-\kappa} \ln r \right), \;\; &\mbox{if $p \neq \frac{N}{2}$ and $\min \{N-2, 2(p-1)\}=2$};\vspace{1mm}\\
\frac{d}{2N}+O \left(r^{-2} \right), \;\; &\mbox{if $p=\frac{N}{2}$ and $N \geq 5$};\vspace{1mm}\\
\frac{d}{2N}+O \left(r^{-1} \ln r \right), \;\; &\mbox{if $p=\frac{3}{2}$ and $N=3$};\vspace{1mm}\\
\frac{d}{2N}+O \left(r^{-2} (\ln r)^2 \right), \;\; &\mbox{if $p=2$ and $N=4$},
\end{array} \right.
\end{equation}
where $\kappa=\min \{2, N-2, 2(p-1)\}$.
\end{prop}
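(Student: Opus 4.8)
The plan is to reduce the statement to one-dimensional ODE integration, starting from the a priori two-sided bound $r^{-2}u_{a,b}(r)\in(A_1,A_2)$ for large $r$ (from \cite{DFG}) together with the radial form of \eqref{1.1-0}. Set $g(r):=\Delta u_{a,b}(r)$; since $\Delta^2u_{a,b}=-u_{a,b}^{-p}$, the function $g$ solves $(r^{N-1}g')'=-r^{N-1}u_{a,b}^{-p}$ on $(r_0,\infty)$, and the bound on $u_{a,b}$ makes $r^{N-1}u_{a,b}^{-p}(r)$ comparable to $r^{N-1-2p}$. One integration shows $r^{N-1}g'(r)$ is decreasing, and its behavior splits according to the sign of $\tfrac N2-p$ — which is exactly the dichotomy $p\ne\frac N2$ versus $p=\frac N2$ in \eqref{6.0}.

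First I would treat $p>\frac N2$: then $\int^\infty s^{N-1}u_{a,b}^{-p}\,ds$ converges, so $r^{N-1}g'(r)\to\ell_0\in\R$, and the tail bound $\int_r^\infty s^{N-1}u_{a,b}^{-p}\,ds=O(r^{N-2p})$ gives $g'(r)=\ell_0 r^{1-N}+O(r^{1-2p})$. Integrating once more over $(r,\infty)$ — both terms are integrable at infinity because $N\ge3$ and $p>1$ — yields $g(r)=d+O(r^{2-N})+O(r^{2-2p})$ for a finite constant $d$, i.e. \eqref{6.0} with exponent $\min\{N-2,2(p-1)\}$. For $p<\frac N2$ the integral $\int_{r_0}^r s^{N-1}u_{a,b}^{-p}\,ds$ diverges of order $r^{N-2p}$, hence $g'(r)$ is comparable to $-r^{1-2p}$ (still integrable), giving $g(r)=d+O(r^{2-2p})$; since $2(p-1)<N-2$ in this range, the two formulas combine into the single claim \eqref{6.0}. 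For the borderline $p=\frac N2$ the same scheme gives $r^{N-1}g'(r)$ of order $\ln r$, hence $g(r)=d+O(r^{2-N}\ln r)$, which is the second line of \eqref{6.0}. To obtain $d>0$, I would feed $g(r)\to d$ back into $(r^{N-1}u_{a,b}')'=r^{N-1}g(r)$, integrate twice to get $u_{a,b}(r)=\frac{d}{2N}r^2+o(r^2)$, and compare with $r^{-2}u_{a,b}\in(A_1,A_2)$ to conclude $\frac{d}{2N}\in[A_1,A_2]$, so $d\ge2NA_1>0$.

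With $g(r)=d+E(r)$ and $E(r)$ of the size just found, I would prove \eqref{6.00} by integrating $(r^{N-1}u_{a,b}')'=r^{N-1}(d+E(r))$ twice. Writing $\mu:=\min\{N-2,2(p-1)\}$, the first integration gives $u_{a,b}'(r)=\frac dN r+O(r^{1-\mu})$ (with an extra $\ln r$ when $p=\frac N2$), using $N-1-\mu>-1$; the second gives $u_{a,b}(r)=\frac{d}{2N}r^2+\int_{r_0}^r O(s^{1-\mu})\,ds+O(1)$, and $\int_{r_0}^r s^{1-\mu}\,ds$ is $O(r^{2-\mu})$, $O(\ln r)$, or $O(1)$ according as $\mu<2$, $\mu=2$, or $\mu>2$. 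Dividing by $r^2$ produces precisely the listed cases: $r^{-2}u_{a,b}(r)=\frac{d}{2N}+O(r^{-\kappa})$ with $\kappa=\min\{2,\mu\}$ when $\mu\ne2$, the $\ln r$ refinement when $\mu=2$, and for $p=\frac N2$ the extra logarithm carried through the computation gives $O(r^{-1}\ln r)$ at $N=3$, $O(r^{-2}(\ln r)^2)$ at $N=4$, and $O(r^{-2})$ for $N\ge5$ (since $\int^\infty s^{3-N}\ln s\,ds$ then converges).

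The main point requiring care is not any single hard estimate but the bookkeeping across the three regimes $p\gtrless\frac N2$ and then the split $\mu\gtrless2$, making sure the $O$-bounds (not merely leading asymptotics) survive each integration and that the borderline exponents where a logarithm appears are matched correctly to the cases in \eqref{6.0}--\eqref{6.00}. One should also note the order of the argument: the proof of $d>0$ must come before the sharp expansion of $u_{a,b}$ to avoid circularity — the bound from \cite{DFG} is used only to locate $\frac{d}{2N}$ in $[A_1,A_2]$, after which all remainder estimates follow purely from the ODE.
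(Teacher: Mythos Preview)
Your argument is correct and rests on the same two inputs as the paper's proof: the a priori two-sided bound $A_1\le r^{-2}u_{a,b}\le A_2$ from \cite{DFG} and the radial ODE structure. The execution differs slightly. The paper first shows $\Delta u_{a,b}$ is monotone decreasing and rules out a nonpositive limit by contradiction with the lower bound $A_1>0$, so that $d>0$ is secured before any error estimate; it then sets $t=\ln r$, reducing the equations for $\Delta u_{a,b}-d$ and $r^{-2}u_{a,b}-\tfrac{d}{2N}$ to constant-coefficient linear ODEs with forcing and reads off the remainders from the variation-of-constants formula. You instead keep the divergence-form equations in the variable $r$, integrate twice by hand, and extract $d>0$ only after obtaining $r^{-2}u_{a,b}\to \tfrac{d}{2N}$. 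Your route is a bit more elementary and avoids the Emden--Fowler substitution, while the paper's formulation makes the case split (and the appearance of logarithms at the resonant exponents) transparent through the characteristic roots $0,2-N,-2,-N$; the resulting estimates and the case bookkeeping are identical.
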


\proof We first show
\begin{equation}
\label{6.1}
\Delta u_{a,b} (r) \to d, \;\; r^{-2} u_{a,b} (r) \to \frac{d}{2N} \;\; \mbox{as $r \to \infty$}.
\end{equation}

It is easily seen from the equation in \eqref{1.2} that $\Delta u_{a,b} (r)$ is decreasing in $(0, \infty)$.
Therefore, there are three cases for $\Delta u_{a,b} (r)$:

(i) $\Delta u_{a,b} (r) \to -e<0$ ($e$ may be $+\infty$) as $r \to \infty$,

(ii) $\Delta u_{a,b} (r) \to 0$ as $r \to \infty$,

(iii) $\Delta u_{a,b} (r) \to d>0$ as $r \to \infty$.

We show that the cases (i) and (ii) do not happen. Since
$$
r^{-2} u_{a,b} (r) \in (A_1, A_2) \;\; \mbox{for $r$ sufficiently large},
$$
we have that
\begin{equation}
\label{6.2}
{\underline {\lim}}_{r \to \infty} r^{-2} u_{a,b} (r) \geq A_1>0.
\end{equation}

If (i) occurs, we see that for any small $\epsilon>0$, there is an $R=R(\epsilon)>1$ such that
\begin{equation}
\label{6.3}
\Delta u_{a,b} (r)<-e+\epsilon \;\; \mbox{for $r>R$}.
\end{equation}
(We may assume $0<e<\infty$. If $e=\infty$, we can choose any $0<e_1<\infty$ such that \eqref{6.3} holds.)
This implies
$$r^{N-1} u_{a,b}'(r)-R^{N-1} u_{a,b}'(R) \leq \frac{(-e+\epsilon)}{N} (r^N-R^N),$$
and
$$u_{a,b}'(r) \leq \frac{R^{N-1}}{r^{N-1}} u_{a,b}'(R)+\frac{(-e+\epsilon)}{N} (r-R^N r^{1-N}).$$
Therefore,
\begin{eqnarray*}
u_{a,b} (r) &\leq&  u_{a,b} (R)+\frac{R^{N-1} u_{a,b}'(R)}{2-N} (r^{2-N}-R^{2-N})\\
& & \;\;\;\; +\frac{(-e+\epsilon)}{2 N} (r^2-R^2)+\frac{(-e+\epsilon) R^N}{N (N-2)} (r^{2-N}-R^{2-N}).
\end{eqnarray*}
This implies
$$
{\overline {\lim}}_{r \to \infty} r^{-2} u_{a,b} (r) \leq -\frac{e}{2 N}<0
$$
by sending $\epsilon$ to 0. This contradicts to \eqref{6.2}.

If (ii) occurs, arguments similar to those in the proof of case (i) imply that
$$
{\overline {\lim}}_{r \to \infty} r^{-2} u_{a,b} (r) \leq 0.
$$
This also contradicts to \eqref{6.2}.

Therefore, case (iii) occurs. Clearly using the arguments similar to those in the proof of case (i), we can prove that
$$
\lim_{r \to \infty} r^{-2} u_{a,b} (r)=\frac{d}{2 N},
$$
and then the limits in \eqref{6.1} hold.

To prove the identities in \eqref{6.0}, we define $v(r)=\Delta u(r)-d$. We omit $a,b$ from $u_{a,b}$ in the following. Then $v(r) \to 0$ as $r \to \infty$ and $v(r)$ satisfies the equation
$\Delta v(r)=\Delta^2 u(r)=-u^{-p}$. It follows from \eqref{6.1} that, for $r$ near $+\infty$,
$$\Delta v(r)=O(r^{-2p}).$$
This implies that
$$r^2 v''(r)+(N-1) r v'(r)=O(r^{2 (1-p)}).$$
Making the transformations:
$$w(t)=v(r), \;\;\; t=\ln r,$$
we have that, for $t$ near $\infty$, $w(t)$ satisfies the equation:
$$w''(t)+(N-2) w'(t)=O(e^{2(1-p) t}).$$
The ODE theory implies that for $T \gg 1$ sufficiently large and $t>T$,
\begin{eqnarray*}
w(t)&=& M_1+A_2 e^{(2-N) t}-B_1 \int_t^\infty O (e^{2(1-p) t}) ds\\
& &\;\;\;\;\;\;\;\;\;\;+B_2 \int_T^t e^{(2-N) (t-s)} O(e^{2(1-p) s}) ds.
\end{eqnarray*}
Note that $B_1$ and $B_2$ are independent of $T$. Since $w(t) \to 0$ as $t \to \infty$, we have that $M_1=0$ and we easily see that
$$w(t)=\left \{ \begin{array}{ll} O(e^{-\min \{N-2, \; 2(p-1)\}t}), \;\; &\mbox{if $p \neq \frac{N}{2}$}, \vspace{1mm}\\
O(t e^{-(N-2) t}), \;\; &\mbox{if $p=\frac{N}{2}$}. \end{array} \right.$$
This implies that the identities in \eqref{6.0}
hold. To see the identities in \eqref{6.00}, we define $\varrho(r)=r^{-2} u (r)-\frac{d}{2N}$. Then $\varrho(r) \to 0$ as $r \to \infty$ and $\varrho(r)$ satisfies the equation
$$r^2 \varrho''+(N+3) r\varrho'+2N \varrho=\Delta u (r)-d=\left \{ \begin{array}{ll} O \left(r^{-\min \{N-2, \; 2(p-1)\}} \right), \;\; &\mbox{if $p \neq \frac{N}{2}$},\vspace{1mm}\\
O \left(r^{-(N-2)}\ln r \right), \;\; &\mbox{if $p=\frac{N}{2}$}. \end{array} \right.
$$
Making the transformations:
$$z(t)=\varrho(r), \;\; t=\ln r,$$
we have that, for $t$ near $\infty$, $z(t)$ satisfies the equation
$$z''(t)+(N+2) z'(t)+2N z(t)=\left \{ \begin{array}{ll} O \left(e^{-\min \{N-2, \; 2(p-1)\} t} \right), \;\; &\mbox{if $p \neq \frac{N}{2}$},\vspace{1mm}\\
 O \left(t e^{-(N-2)t}\right), \;\; &\mbox{if $p=\frac{N}{2}$}. \end{array} \right.
$$
Arguments similar to those in the proof of \eqref{6.0} imply that for $t$ near $+\infty$,
$$z(t)=\left \{ \begin{array}{ll} O(e^{-\kappa t}), \;\; &\mbox{if $p \neq \frac{N}{2}$ and $\min \{N-2, 2(p-1)\} \neq 2$},\vspace{1mm}\\
O \left(t e^{-\kappa t}\right), \;\; &\mbox{if $p \neq \frac{N}{2}$ and $\min \{N-2, 2(p-1)\}=2$},\vspace{1mm}\\
O\left(e^{-2 t} \right), \;\; & \mbox{if $p=\frac{N}{2}$ and $N \geq 5$},\vspace{1mm}\\
O \left( t e^{- t} \right), \;\; &\mbox{if $p=\frac{3}{2}$ and $N=3$},\vspace{1mm}\\
O \left(t^2 e^{-2t } \right), \;\; &\mbox{if $p=\frac{4}{2}$ and $N=4$},
\end{array} \right.
$$
where $\kappa=\min \{2, N-2, 2(p-1)\}$. This implies that the identities in \eqref{6.00} hold. Since $u(r)=r^2 \varrho(r)+\frac{d}{2N} r^2$, we have that
$$\Delta (r^2 \varrho(r))=\Delta u (r)-d>0 \;\; \mbox{for $r \in (0, \infty)$.}$$
If we define $\omega (r):=r^2 \varrho(r)$, we see that $\omega'(0)=0$ and hence
$$\omega'(r)>0 \;\; \mbox{for $r \in (0, \infty)$}.$$
The proof of this proposition is completed. \qed

\begin{rem}
We can easily see that for any fixed $a>0$, $d:=d(a, b)>0$ for $b \in ({\tilde b}, \infty)$ is an increasing function of $b$ with
$$\lim_{b \to {\tilde b}^+} d(a, b)=0.$$
We also know that
$$\lim_{b \to \infty} d(a, b)=\infty.$$
\end{rem}

{\bf Proof of Theorem \ref{main-3}.}

Without loss of generality, we assume $x_*=0$ in Theorem \ref{main-3}. The necessity follows from Proposition \ref{6-p1.1}.

To prove the sufficiency of Theorem \ref{main-3}, we need to know more information on the asymptotic behavior of an entire solution $u \in C^4 (\R^N)$ of \eqref{1.1-0} satisfying \eqref{condition-nonminimal}. The main idea is similar to that of the proof of the sufficiency of Theorem \ref{main}.

Let $u \in C^4 (\R^N)$ be an entire solution of \eqref{1.1-0}. We introduce the Kelvin-type transformation:
\begin{equation}
\label{7.1}
v (y)=|x|^{-2} u(x)-D, \;\;\;\; y=\frac{x}{r^2}, \;\; r=|x|>0, \;\; D>0.
\end{equation}
Then $v(y)=v(s, \theta)$ with $s=|y|=r^{-1}$ satisfies $v(s, \theta) \to 0$ as $s \to 0$ and the equation:
\begin{eqnarray}
\label{7.2}
& & \partial_s^4 v-2 (N-3) s^{-1} \partial_s^3v+(N-1)(N-3) s^{-2} \partial_s^2v-(N-1)(N-3) s^{-3} \partial_s v \nonumber \\
& &\;\;\;\;\;\;+2N s^{-4} \Delta_\theta v-2(N-1) s^{-3} \Delta_\theta (\partial_s v)+2 s^{-2} \Delta_\theta (\partial_s^2 v)+s^{-4} \Delta^2_\theta v \nonumber \\
& &\;\;\;\;\;\;+s^{-6+2p}(v+D)^{-p}=0.
\end{eqnarray}

Define
$$w(s, \theta)=v(s, \theta)-{\overline v}(s),$$
where
$${\overline v}(s)=\frac{1}{|S^{N-1}|} \int_{S^{N-1}} v(s, \theta) d \theta.$$
Then ${\overline v}$ and $w$ respectively satisfy
\begin{eqnarray}
\label{7.3}
& &{\overline v}_s^{(4)}-2(N-3)s^{-1}{\overline v}_{sss}+(N-1)(N-3)s^{-2} {\overline v}_{ss}-(N-1)(N-3) s^{-3} {\overline v}_s \nonumber \\
& & \;\;\;\;\;\;+s^{-6+2p}\overline{(v+D)^{-p}}=0
\end{eqnarray}
and
\begin{eqnarray}
\label{7.4}
& & \partial_s^4 w-2(N-3)s^{-1} \partial_s^3w +(N-1)(N-3)s^{-2}\partial_s^2w-(N-1)(N-3) s^{-3} \partial_sw  \nonumber \\
& &\;\;\;\;\;\;+2N s^{-4} \Delta_\theta w-2(N-1) s^{-3} \Delta_\theta (\partial_s w)+2 s^{-2} \Delta_\theta (\partial_s^2w)+s^{-4} \Delta_\theta^2 w \nonumber \\
& &\;\;\;\;\;\;-s^{-4} g(w)=0,
\end{eqnarray}
where
\begin{eqnarray*}
g(w)&=&s^{2p-2}(v+D)^{-p} - s^{2p-2}\overline{(v+D)^{-p}}\\
&=& s^{2(p-1)} \Big[ \Big((v+D)^{-p}-({\overline v}+D)^{-p} \Big)+ {\overline {\Big(({\overline v}+D)^{-p}-(v+D)^{-p} \Big)}} \Big] \\
&=& -ps^{2(p-1)} \Big[(\xi(s,\theta)+D)^{-(p+1)}w(s,\theta)-\overline{(\xi(s,\theta)+D)^{-(p+1)}w(s,\theta)} \Big]
\end{eqnarray*}
and $\xi (s, \theta)$ is between $v(s, \theta)$ and ${\overline v}(s)$. If we define
$$\zeta (s)=\max_{\theta\in S^{N-1}}| -ps^{2(p-1)}(\xi(s,\theta)+D)^{-(p+1)}|
=ps^{2(p-1)}\max_{\theta\in S^{N-1}}|(\xi(s,\theta)+D)^{-(p+1)}| ,$$
we see that
\begin{eqnarray}
\label{addition} \zeta (s)=O(s^{2(p-1)})\;\; \mbox{for $s$ near 0}. \end{eqnarray}
Note that $\xi (s, \theta) \to 0$ as $s \to 0$.

Since ${\overline w}(s)=0$, we have the expansion:
$$w (s, \theta)=\sum_{i=1}^\infty \sum_{j=1}^{m_i} w_j^i (s) Q_j^i (\theta),$$
where $\{Q_1^1 (\theta), Q_2^1 (\theta), \ldots, Q_{m_1}^1 (\theta), Q_1^2 (\theta), \ldots,
Q_{m_2}^2 (\theta), Q_1^3 (\theta), \ldots\}$ is given in Section 2. We also see that $w_j^i (s)$
with $1 \leq j \leq m_i$ satisfies the equation
\begin{eqnarray}
\label{7.5}
& &(w_j^i)^{(4)}-2(N-3)s^{-1} (w_j^i)_{sss}+[(N-1)(N-3)-2 \lambda_i]s^{-2} (w_j^i)_{ss} \nonumber \\
& &\;\;\;-(N-1)[N-3-2 \lambda_k]s^{-3} (w_j^i)_s-(2N\lambda_i-\lambda_i^2) s^{-4} w_j^i \nonumber \\
& &\;\;\;\;\;\;=s^{-4} {\tilde g}_j^i (s),
\end{eqnarray}
where $\lambda_i=i (N+i-2)$, $i=0,1,2, \ldots$ are the eigenvalues of the equation
$-\Delta_{S^{N-1}} Q=\lambda Q$ given by (\ref{2.3})
and
\begin{equation*}
\tilde{g}_j^i (s)=\int_{S^{N-1}} g(w)Q_j^i (\theta) d\theta=-p \int_{S^{N-1}} s^{2(p-1)}(\xi(s,\theta)+D)^{-(p+1)} w(s,\theta)Q_j^i (\theta)d\theta.
\end{equation*}
We see that
$$|\tilde{g}_j^i (s)| \leq C \zeta (s) W(s)=O(s^{2(p-1)}) W(s) \;\; \mbox{for $s$ near 0},$$
where $W(s)=\Big(\int_{S^{N-1}} w^2 (s, \theta) d \theta \Big)^{\frac{1}{2}}$.

Similar to Proposition \ref{3.1}, we have the following result.

\begin{prop}
\label{p7.1}
For $N=3$ and $1<p<3$; $N \geq 4$ and $p>1$, there exist a sufficiently small $0<s_0<\frac{1}{10}$ and $C>0$ independent of $s$ such that for $s \in (0, s_0)$,
\begin{equation}
\label{7.6}
W(s) \leq C s.
\end{equation}
\end{prop}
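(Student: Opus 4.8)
The plan is to mimic the structure of the proof of Proposition \ref{p3.1}, but in the simpler ``non-minimal'' setting where the Kelvin transform \eqref{7.1} uses the exponent $2$ rather than $\alpha$. First I would perform the change of variables $t=-\ln s$, $z_j^i(t)=w_j^i(s)$ in the ODE \eqref{7.5}, turning it into a fourth-order constant-coefficient linear ODE with a right-hand side $\tilde g_j^i$ that, by the bound $|\tilde g_j^i(s)|\le C\zeta(s)W(s)=O(s^{2(p-1)})W(s)$, is an exponentially small perturbation: $|\tilde g_j^i(e^{-t})|=O(e^{-2(p-1)t})\tilde W(t)$ with $\tilde W(t)=W(e^{-t})\to 0$. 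As in the earlier argument, using the $L^2$-orthogonality of the $Q_j^i$ and the global smallness of $\zeta(s)$, I can reduce (after splitting the index set into ``good'' and ``bad'' pairs exactly as done for \eqref{3.3-3} via \eqref{inf-est}) to the case $|\tilde g_j^i(s)|=o_s(1)|w_j^i(s)|$ for every fixed $(i,j)$.

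Next I would compute the characteristic polynomial of the transformed equation and find its four roots explicitly. Here the roots are the analogues of \eqref{3.6} with $\alpha$ replaced by the value appearing in \eqref{7.2}; concretely the characteristic equation factors so that the roots come in the form $\tfrac12\big(4-N \pm\sqrt{4+(N-2+2i)^2\pm 4\sqrt{\varrho_i}}\big)$ for a suitable $\varrho_i>0$ (one should verify $\varrho_i>0$ and that $T_i:=[4+(N-2+2i)^2]^2-16\varrho_i\ge 0$ so that all four roots are real). The crucial spectral facts I need are: for $i\ge 2$ the roots satisfy $\beta_2^{(i)}<\beta_4^{(i)}<-1<0<\beta_3^{(i)}<\beta_1^{(i)}$, with the gaps behaving like $\pm 1$ as $i\to\infty$; and for $i=1$ (the eigenvalue $\lambda_1=N-1$, corresponding to the first spherical harmonics $\theta\cdot x_0$) one has $\beta_4^{(1)}=-1$, hence $z_j^1(t)$ decays no faster than $e^{-(t-T)}$. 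These are the exact counterparts of Claims 1--4.

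Then, following the representation-formula argument leading to \eqref{3.14-0}, I would write each $z_j^i(t)$ via variation of parameters, discard the growing modes (which must have zero coefficient since $z_j^i(t)\to 0$), and use the Gronwall/bootstrap argument of \cite{GHZ} together with the exponential smallness of $\tilde g_j^i$ to obtain $|z_j^i(t)|=O\big(i\,e^{\beta_4^{(i)}(t-T)}\big)$ for $i\ge 2$ and $|z_j^1(t)|=O(e^{-(t-T)})$. Summing over $i,j$ with the multiplicity bound $m_i$ and the ratio test $\lim_{i\to\infty}\frac{(i+1)m_{i+1}}{i\,m_i}=1$ combined with $\beta_4^{(2)}<-1$, the tail $\sum_{i\ge 2}i\,m_i e^{\beta_4^{(i)}(t-T)}$ is dominated by its first term $O(e^{\beta_4^{(2)}(t-T)})=o(e^{-t})$, so $\tilde W(t)\le \hat W(t)=O(e^{-t})$, i.e. $W(s)\le Cs$ for $0<s<s_0$.

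The main obstacle I expect is the spectral bookkeeping for $i=1$: unlike the minimal case, there is no extra decay from the right-hand side beyond $O(s^{2(p-1)})$, and $2(p-1)$ can be arbitrarily small when $p$ is close to $1$; one must check that this weak perturbation is still harmless, which works precisely because the dominant mode $\beta_4^{(1)}=-1$ is \emph{non-resonant} with the forcing exponent $-2(p-1)>-1$ for $p<\tfrac32$ (and for $p\ge\tfrac32$ one handles the borderline $e^{-t}\ln$-type terms exactly as in case (iv) of Proposition \ref{p3.1}, absorbing them into the $O(s)$ bound since the $i=1$ roots other than $\beta_4^{(1)}=-1$ are $\beta_2^{(1)}<-1$ and $\beta_1^{(1)}>0$). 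The rest is routine once Claims 1--4 are re-established in this setting.
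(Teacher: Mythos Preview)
Your overall strategy is the right one and matches the paper, but the spectral picture you sketch is off in a way that hides the one genuinely delicate step.

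First, the characteristic roots here are much simpler than you suggest. Because the non-minimal Kelvin transform \eqref{7.1} uses the exponent $2$, the zero-order ``potential'' term $(p+1)L^{-(p+1)}$ present in \eqref{3.5} disappears from the analogue \eqref{7.8}; the polynomial factors completely and the four roots are the integers
\[
\nu_1^{(i)}=i,\qquad \nu_3^{(i)}=i-2,\qquad \nu_2^{(i)}=2-N-i,\qquad \nu_4^{(i)}=-N-i.
\]
Your proposed form $\tfrac12\big(4-N\pm\sqrt{\cdots}\big)$ has the wrong center (the sum of the roots is $-2N$, not $8-2N$), and at $i=1$ the root equal to $-1$ is $\nu_3^{(1)}$, not $\nu_4^{(1)}$.

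More importantly, your claim that for all $i\ge 2$ the two ``upper'' roots are strictly positive is false: for $i=2$ one has $\nu_3^{(2)}=0$. This zero root is exactly the place where the argument differs from Proposition~\ref{p3.1}. With only the qualitative bound $|\tilde g_j^i|=o_s(1)|w_j^i|$ that you obtain from the good/bad splitting, you cannot show that $\int_t^\infty f_j^2(\tau)\,d\tau$ converges, and hence cannot discard the constant mode in the variation-of-parameters representation for $z_j^2$. The paper resolves this by \emph{not} throwing away the quantitative decay: since $\zeta(s)=O(s^{2(p-1)})$ (see \eqref{addition}), one has $|f_j^2(t)|=O(e^{-2(p-1)t})|z_j^2(t)|$, which is integrable and forces the zero-mode coefficient to vanish; this yields $|z_j^2(t)|=O(e^{-N(t-T)})$. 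By contrast, the case $i=1$ that you flag as the ``main obstacle'' is actually routine here: the relevant root $-1$ is simple and strictly negative, so the argument is exactly the analogue of case~(i) of Proposition~\ref{p3.1}, with no resonance or logarithmic correction needed.
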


\proof  Let $t=-\ln s$, $z_j^i (t)=w_j^i (s)$. Then $z_j^i (t)$ satisfies the equation
\begin{eqnarray}
\label{7.7}
& & (z_j^i)^{(4)}+2N (z_j^i)_{ttt}+(N^2+2N-4-2 \lambda_i) (z_j^i)_{tt}+2N (N-2-\lambda_i) (z_j^i)_t \nonumber \\
& &\;\;\;\;\;\;\;-\lambda_i (2N-\lambda_i) z_j^i=f_j^i (t),
\end{eqnarray}
where $f_j^i (t)={\tilde g}_j^i (e^{-t})$. The corresponding polynomial of \eqref{7.7} is
\begin{equation}
\label{7.8}
\nu^4+2N \nu^3+(N^2+2N-4-2 \lambda_i) \nu^2+2N (N-2-\lambda_i) \nu -\lambda_i (2N-\lambda_i)=0.
\end{equation}
Using the Matlab, we obtain four roots of \eqref{7.8}:
\begin{equation}
\label{7.9}
\begin{array}{ll} \nu_1^{(i)}=\frac{1}{2} \Big(2-N+\sqrt{(N-2)^2+4 \lambda_i} \Big),  & \nu_2^{(i)}=\frac{1}{2} \Big(2-N-\sqrt{(N-2)^2+4 \lambda_i} \Big),\\
\nu_3^{(i)}=\frac{1}{2} \left(-2-N+\sqrt{(N-2)^2+4 \lambda_i} \right),  & \nu_4^{(i)}=\frac{1}{2} \left(-2-N-\sqrt{(N-2)^2+4 \lambda_i} \right).
\end{array}
\end{equation}
Therefore, we have
\begin{equation}
\label{7.10}
\nu_1^{(i)}=i, \;\; \nu_2^{(i)}=2-N-i, \;\; \nu_3^{(i)}=i-2, \;\; \nu_4^{(i)}=-N-i.
\end{equation}
We easily see that
$$\nu_4^{(i)}<\nu_2^{(i)}<\nu_3^{(i)}<\nu_1^{(i)}.$$
For $i=1$,
$$\nu_1^{(1)}=1, \;\; \nu_2^{(1)}=1-N, \;\; \nu_3^{(1)}=-1, \;\; \nu_4^{(1)}=-N-1$$
and
$$\nu_4^{(1)}<\nu_2^{(1)}<\nu_3^{(1)}=-1<0<\nu_1^{(1)}.$$
For $i=2$,
$$\nu_1^{(2)}=2, \;\; \nu_2^{(2)}=-N, \;\; \nu_3^{(2)}=0, \;\; \nu_4^{(2)}=-N-2$$
and
$$\nu_4^{(2)}<\nu_2^{(2)}<-1<\nu_3^{(2)}=0<\nu_1^{(2)}.$$
For $i \geq 3$, we see that
$$\nu_4^{(i)}<\nu_2^{(i)}<-1<0<\nu_3^{(i)}<\nu_1^{(i)}.$$

For $i \geq 3$, we see from \eqref{7.7} and ODE theory that for any $T \gg 1$, there are constants $A_{j, k}^i$, $B_{j,k}^i \; (k=1,2,3,4)$ such that, for $t>T$,
$$z_j^i (t)=\sum_{k=1}^4 \Big[ A_{j,k}^i e^{\nu_k^{(i)} t}+B_k^i \int_T^t e^{\nu_k^{(i)} (t-\tau)} f_j^i (\tau) d \tau \Big],$$
where each $A_{j,k}^i$ depends on $T$ and $\nu_k^{(i)}$, but each $B_k^i$ depends only on $\nu_k^{(i)}$. Therefore,
\begin{eqnarray}
\label{7.11}
z_j^i (t) &=& M_{j,1}^i e^{\nu_1^{(i)} t}+M_{j,3}^i e^{\nu_3^{(i)} t}+A_{j, 2}^i e^{\nu_2^{(i)} t}+A_{j,4}^i e^{\nu_4^{(i)} t} \nonumber \\
& &\;\;\;\;\;-B_1^i \int_t^\infty e^{\nu_1^{(i)} (t-\tau)} f_j^i (\tau) d \tau -B_3^i \int_t^\infty e^{\nu_3^{(i)} (t-\tau)} f_j^i (\tau) d \tau \nonumber \\
& &\;\;\;\;\;+B_2^i \int_T^t e^{\nu_2^{(i)} (t-\tau)} f_j^i (\tau) d \tau+B_4^i \int_T^t e^{\nu_4^{(i)} (t-\tau)} f_j^i (\tau) d \tau
\end{eqnarray}
by using that $\int_T^t=\int_T^\infty-\int_t^\infty$. Note that
\begin{equation}
\label{7.12}
\int_t^\infty e^{\nu_1^{(i)} (t-\tau)} f_j^i (\tau) d \tau \to 0, \;\;\; \int_t^\infty e^{\nu_3^{(i)} (t-\tau)} f_j^i (\tau) d \tau \to 0 \;\; \mbox{as $t \to \infty$}.
\end{equation}
Moreover, the facts that $\nu_{4}^{(i)} < \nu_{2}^{(i)}<0$ and $t-\tau>0$ for $\tau\in (T,t)$ imply that
$$ \int^{t}_T e^{\nu_{4}^{(i)}(t-\tau)} |f(\tau)|d\tau  \leq \int^t_T e^{\nu_{2}^{(i)}(t-\tau)} |f(\tau)|d\tau. $$
The facts that $0<\nu_3^{(i)}<\nu_1^{(i)}$ and $t-\tau<0$ for $\tau \in (t, \infty)$ imply that
$$\int_t^\infty e^{\nu_1^{(i)} (t-\tau)} |f_j^i (\tau)| d \tau \leq \int_t^\infty e^{\nu_3^{(i)} (t-\tau)} |f_j^i (\tau)| d \tau.$$
Therefore, since $z_j^i (t) \to 0$ as $t \to \infty$, we see that $M_{j,1}^i=M_{j,3}^i=0$ and there is $C>0$ depending only on $B_{k}^i \;(k=1,2,3,4)$ but independent of $T$ such that
$$|z_j^i(t)| \leq   O( e^{\nu_{2}^{(i)}t}) +C
\int^t_T e^{\nu_{2}^{(i)}(t-\tau)}| f_j^i (\tau)| d\tau
+C\int^\infty_t e^{\nu_{3}^{(i)}(t-\tau)}
|f_j^i (\tau)| d \tau.$$
Arguments similar to those in the proof of \eqref{3.14-1} imply that
\begin{equation}
\label{7.12-1}
|z_j^i (t)|=O(i e^{\nu_2^{(i)} (t-T)})
\end{equation}
for $t>T$ and $i \geq 3$, $1 \leq j \leq m_i$.

For $i=1$, it is known  that $\nu_{4}^{(1)} < \nu_{2}^{(1)}<\nu_{3}^{(1)}=-1<0<\nu_{1}^{(1)}=1$.
The fact that $z_j^1 (t) \to 0$ as $t \to \infty$ implies that $z_j^1(t)$ can be written in the form
\begin{align*}
z_j^1 (t)=& A_{j,2}^1 e^{\nu_{2}^{(1)}t}+A_{j,3}^1 e^{\nu_{3}^{(1)}t}+A_{j,4}^1 e^{\nu_{4}^{(1)}t}\\
&-B_{1}^1 \int^{\infty}\limits_{t}e^{\nu_{1}^{(1)}(t-\tau)} f_j^1 (\tau)d\tau +B_{3}^1 \int^{t}\limits_{T}e^{\nu_{3}^{(1)}(t-\tau)} f_j^1 (\tau)d\tau\\
&+B_{2}^1 \int^{t}\limits_{T}e^{\nu_{2}^{(1)}(t-\tau)} f_j^1 (\tau)d\tau +B_{4}^1 \int^{t}\limits_{T}e^{\nu_{4}^{(1)}(t-\tau)} f_j^1 (\tau)d\tau.
\end{align*}
Arguments similar to those in the proof of \eqref{7.12-1} imply that
\begin{equation}
\label{7.12-2}
|z_j^1 (t)|=O(e^{-(t-T)})
\end{equation}
for $t>T$ and $1 \leq j \leq m_1$.

For $i=2$, it is known  that $\nu_{4}^{(2)} < \nu_{2}^{(2)}<-1<\nu_{3}^{(2)}=0<\nu_{1}^{(2)}$.
The fact that $z_j^2 (t)\to 0$ as $t \to \infty$ implies that $z_j^2 (t)$ can be written in the form
\begin{eqnarray*}
z_j^2 (t)&=&  A_{j,2}^2 e^{\nu_{2}^{(2)}t}+A_{j,4}^2 e^{\nu_{4}^{(2)}t}\\
& &\;\;\;\;-B_{1}^2 \int^\infty_t e^{\nu_{1}^{(2)}(t-\tau)} f_j^2 (\tau)d\tau -B_{3}^2 \int^{\infty}\limits_{t}  f_j^2 (\tau)d\tau\\
& &\;\;\;\;+B_{2}^2 \int^t_T e^{\nu_{2}^{(2)}(t-\tau)} f_j^2 (\tau)d\tau+B_{4}^2 \int^t_T e^{\nu_{4}^{(2)}(t-\tau)} f_j^2 (\tau)d\tau.
\end{eqnarray*}
Similarly, noting (\ref{addition}) we have
\begin{equation}
\label{7.12-3}
|z_j^2 (t)|=O( e^{\nu_2^{(2)} (t-T)})\; (=O(e^{-N(t-T)}))
\end{equation}
for $t>T$ and $1 \leq j \leq m_2$.
Therefore, if we set $Z(t)=W(s)$ with $t=-\ln s$, arguments similar to those in the proof of \eqref{3.18} imply that
\begin{equation}
\label{7.12-4}
Z (t)=O(e^{-t})
\end{equation}
for $t>T_*$ and $T^*=10 T$.

Let $s_0=e^{-T_*}$. We see from \eqref{7.12-4} that there exists $C>0$ such that for $0<s<s_0$,
\begin{equation}
\label{7.12-5}
W (s) \leq C s.
\end{equation}
This completes the proof of this proposition. \qed

\begin{lem}
\label{l7.2}
Let $v$ be a solution of \eqref{7.2}. Then there exist constant $0<s_0<\frac{1}{10}$ and $M=M(v)>0$ such that for $N=3$ and $1<p<3$; $N \geq 4$ and $p>1$; $s \in (0, s_0)$,
\begin{equation}
\label{7.17}
\left \{ \begin{array}{llll} |{\overline v}(s)| \leq M s, & |{\overline v}'(s)| \leq M , & |{\overline v}''(s)| \leq Ms^{-1} \;&\mbox{for $p>\frac{3}{2}$},\vspace{1mm}\\
|{\overline v}(s)| \leq M s^{1-\epsilon}, & |{\overline v}'(s)| \leq M s^{-\epsilon}, & |{\overline v}''(s)| \leq M s^{-1-\epsilon} \;&\mbox{for $p=\frac{3}{2}$},\vspace{1mm}\\
|{\overline v}(s)| \leq M s^{2(p-1)}, & |{\overline v}'(s)| \leq M s^{2(p-1)-1}, & |{\overline v}''(s)| \leq Ms^{2(p-1)-2} \; &\mbox{for $1<p<\frac{3}{2}$,}
\end{array} \right.
\end{equation}
where $0<\epsilon<\frac{1}{100}$ is sufficiently small, and
\begin{equation}
\label{7.18}
\int_{S^{N-1}} v^2 (s, \theta) d \theta \leq\left \{ \begin{array}{ll} M s^2 \;\;&\mbox{for $p>\frac{3}{2}$},\vspace{1mm}\\
   M s^{2(1-\epsilon)} \;\;&\mbox{for $p=\frac{3}{2}$},\vspace{1mm}\\
  M s^{4(p-1)} \;\;&\mbox{for $1<p<\frac{3}{2}$}.
\end{array} \right.
\end{equation}
\end{lem}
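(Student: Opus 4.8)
The plan is to analyze the linear fourth–order ODE \eqref{7.3} satisfied by $\overline v$, along the lines of the treatment of $\overline z(t)$ in Section~4. First I would extract the leading part of the forcing: since $v(s,\theta)\to 0$ as $s\to 0$ and $W(s)\le Cs$ by Proposition~\ref{p7.1}, the average of $(v+D)^{-p}$ expands as
\[
\overline{(v+D)^{-p}}=D^{-p}-pD^{-(p+1)}\overline v+\overline R(s),\qquad |\overline R(s)|\le C\big(\overline v(s)^2+W(s)^2\big),
\]
so that \eqref{7.3} becomes $\mathcal L[\overline v]=-D^{-p}s^{2p-6}+pD^{-(p+1)}s^{2p-6}\,\overline v+s^{2p-6}\overline R(s)$, where $\mathcal L$ denotes the scale–invariant operator on the left of \eqref{7.3}. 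Because $\mathcal L$ is an Euler operator, $\mathcal L[s^\mu]=P(\mu)s^{\mu-4}$ with $P$ vanishing exactly at $\mu\in\{0,\,N-2,\,2,\,N\}$ — equivalently, the roots of the characteristic polynomial \eqref{7.8} with $\lambda_i=0$, namely $\nu_j^{(0)}\in\{0,\,2-N,\,-2,\,-N\}$ from \eqref{7.10}. Hence the homogeneous solutions in the $s$–variable are $1,\ s^{N-2},\ s^2,\ s^N$; the constant mode is excluded because $\overline v\to 0$, and every decaying homogeneous mode has exponent $\ge 1$ if $N=3$ and $\ge 2$ if $N\ge 4$.

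Passing to $t=-\ln s$, $\overline z(t)=\overline v(s)$, I would write $\overline z$ by the variation–of–constants formula in the four exponentials $e^{\nu_j^{(0)}t}$, discard the $e^{\nu_1^{(0)}t}\equiv 1$ term and the growing tails (their coefficients are forced to vanish by $\overline z(t)\to 0$, as for \eqref{4.1-3}–\eqref{4.1-4}), and read off a particular solution for the main forcing $-D^{-p}s^{2p-6}$ of the form $c\,s^{2p-2}$ — valid since $(2p-2)-4=2p-6$ — provided $2p-2\notin\{0,N-2,2,N\}$; at the exceptional values (in particular $p=N/2$, i.e. $p=\tfrac32$ when $N=3$, and the always–present $p=2$) the particular solution gains a factor $\ln(1/s)$, absorbed into a loss $s^{-\epsilon}$. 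A short bootstrap closes the argument: once a preliminary rate $\overline v=O(s^\gamma)$ with $\gamma>0$ is known, the correction terms $pD^{-(p+1)}s^{2p-6}\overline v$ and $s^{2p-6}\overline R$ are of orders $s^{2p-6+\gamma}$ and $s^{2p-6+2\gamma}+O(s^{2p-4})$, all decaying strictly faster than $s^{2p-6}$ (using $\gamma>0$ and $W(s)=O(s)$), so they do not affect the leading exponent. This yields $|\overline v(s)|\le M s^{\min\{2p-2,\,N-2,\,2\}}$ up to a $\ln(1/s)$ at the resonant exponents, which reads $|\overline v(s)|\le Ms$ for $p>\tfrac32$, $|\overline v(s)|\le Ms^{1-\epsilon}$ for $p=\tfrac32$ (the genuine resonance being $2p-2=N-2=1$ at $N=3$), and $|\overline v(s)|\le Ms^{2(p-1)}$ for $1<p<\tfrac32$, where $2p-2<1$ lies strictly below every decaying homogeneous exponent and no resonance occurs.

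The estimates for $\overline v'$ and $\overline v''$ then follow by differentiating the same integral representation — $\overline v'(s)=-s^{-1}\overline z'(t)$ and $\overline v''(s)=s^{-2}\big(\overline z'(t)+\overline z''(t)\big)$, with $\overline z',\overline z''$ inheriting the exponential rate of $\overline z$ — or, alternatively, by a rescaling/interior–estimate argument applied to \eqref{7.3} as in Lemma~\ref{l2.2}; this gives the claimed $s^{\gamma-1}$ and $s^{\gamma-2}$ decays in each of the three regimes. For the $L^2$–bound, the orthogonality of $\overline v$ and $w$ on $S^{N-1}$ gives $\int_{S^{N-1}}v^2\,d\theta=\omega_{N-1}\overline v(s)^2+W(s)^2$; combining $W(s)\le Cs$ with the three bounds above yields $Ms^2$ for $p>\tfrac32$, $Ms^{2(1-\epsilon)}$ for $p=\tfrac32$, and $Ms^{4(p-1)}$ for $1<p<\tfrac32$ (here $4(p-1)<2$, so the $\overline v^2$ term dominates $W^2$), as claimed. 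The main obstacle is to run the bootstrap despite the singular coefficient $s^{2p-6}$ multiplying the correction terms — this works only because those corrections need merely be \emph{relatively} small once the leading forcing rate $s^{2p-6}$ is factored out — together with the bookkeeping of the logarithmic resonances $p=N/2$ (hence $p=\tfrac32$, $N=3$) and $p=2$, both of which are harmless for the stated (uniform, not always sharp) bounds.
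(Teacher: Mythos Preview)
Your approach is correct and follows the paper's own proof closely: pass to $t=-\ln s$, identify the characteristic exponents $\{0,\,2-N,\,-2,\,-N\}$ of the Euler operator (equations \eqref{7.19}--\eqref{7.20}), apply variation of constants as in \eqref{7.21}, kill the constant mode using $\overline z(t)\to 0$, and read off the decay from the forcing; the derivative and $L^2$ bounds are obtained exactly as you describe. The paper's argument is slightly leaner than yours---since $\overline v\to 0$ the full forcing $s^{2(p-1)}\overline{(v+D)^{-p}}$ is directly $O(e^{-2(p-1)t})$ without your Taylor expansion and bootstrap, and the only resonance relevant to the \emph{stated} bounds is $N=3$, $p=\tfrac32$ (your concerns about $p=2$ and $p=N/2$ are harmless because for $p>\tfrac32$ the lemma claims only $O(s)$, which absorbs any $s^{2p-2}\ln(1/s)$).
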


\proof We recall that ${\overline v}(s)$
satisfies the equation
\begin{eqnarray*}
& & {\overline v}_s^{(4)}-2(N-3)s^{-1} {\overline v}_{sss}+(N-1)(N-3)s^{-2} {\overline v}_{ss}-(N-1)(N-3)s^{-3} {\overline v}_s \\
& &\;\;\;\;\;- s^{-4} h(\overline{v})= s^{-4}[\overline{h(v)}- h(\overline{v})],
\end{eqnarray*}
where $h(v)=s^{2 (p-1)}(v+D)^{-p}$ and
$$|\overline{h(v)}-h(\bar{v})| \leq \frac{1}{\omega_{N-1}}\int_{S^{N-1}}|h(v)-h(\bar{v})| d\theta \leq  o(W(s))=o(s).$$

Let ${\overline z}(t)={\overline v}(s)$, $t=-\ln s$. Then ${\overline z} (t)$ satisfies the equation
\begin{equation}
\label{7.18-1}
 {\overline z}^{(4)}+2N ({\overline z})_{ttt}+(N^2+2N-4) ({\overline z})_{tt}+2N (N-2) ({\overline z})_t=h(\bar{z})+ o(e^{-t}),
\end{equation}
Note that $h(\bar{z})= s^{2 (p-1)} ( \bar{z}+D)^{-p} =O(e^{-(2 (p-1))t})$ for $t$ near $\infty$. The corresponding polynomial of \eqref{7.18-1} is
\begin{equation}
\label{7.19}
\nu^4+2N \nu^3+(N^2+2N-4) \nu^2+2N (N-2) \nu=0.
\end{equation}
The four roots of \eqref{7.19} are:
\begin{equation}
\label{7.20}
\nu_1^{(0)}=0, \;\; \nu_2^{(0)}=2-N, \;\; \nu_3^{(0)}=-2, \;\; \nu_4^{(0)}=-N.
\end{equation}
The ODE theory implies
\begin{eqnarray}
\label{7.21}
{\overline z}(t) &=& M_1+A_2 e^{-2 t}+A_3 e^{-(N-2) t}+A_4 e^{-Nt} \nonumber \\
& &\;\;\;\;-B_1 \int_t^\infty {\overline f}(\tau) d \tau+B_2 \int_T^t e^{-2 (t-\tau)} {\overline f}(\tau) d \tau \nonumber \\
& &\;\;\;\;+B_3 \int_T^t e^{-(N-2) (t-\tau)} {\overline f}(\tau) d \tau+B_4 \int_T^t e^{-N(t-\tau)} {\overline f}(\tau) d \tau,
\end{eqnarray}
where ${\overline f} (t)=h({\overline z}(t))+o(e^{-t})$.
The fact that ${\overline z}(t) \to 0$ as $t \to \infty$ implies that $M_1=0$. Notice that ${\overline f}(t)=O(e^{-\min \{2(p-1),1\} t})$, we see from \eqref{7.21} that
there exists $T \gg 1$ such that for $t>T$,
\begin{equation}
\label{7.21-1}
|{\overline z}(t)|= \left \{ \begin{array}{ll} O(e^{-t}), \;&\mbox{for $p>\frac{3}{2}$},\vspace{1mm}\\
O(e^{-(1-\epsilon) t}),  \; &\mbox{for $p=\frac{3}{2}$ and sufficiently small $0<\epsilon<\frac{1}{100}$}, \\
O(e^{-2(p-1) t}),  \; &\mbox{for $1<p<\frac{3}{2}$}.
\end{array} \right.
\end{equation}
(Note that when $N=3$ and $p=\frac{3}{2}$, the term $|\int_T^t  e^{-(N-2) (t-\tau)} O(e^{-2(p-1) \tau}) d \tau| \leq O (e^{-t} \ln t)$.)
This implies that $\eqref{7.17}_1$ holds. Differentiating \eqref{7.21} with respect to $t$ once and twice respectively and noticing ${\overline v}'(s)=-{\overline z}'(t) e^t$
and ${\overline v}''(s)=[{\overline z}''(t)+{\overline z}'(t)] e^{2t}$, we easily see that $\eqref{7.17}_2$ and $\eqref{7.17}_3$ hold.
Note that $v(s, \theta)=w(s, \theta)+{\overline v}(s)$, we obtain \eqref{7.18}.
This completes the proof of this lemma. \qed

\begin{lem}
\label{l7.3}
Let $\tau \geq 0$ be an integer and let $v$ be a solution of \eqref{7.2}. Then there exist $0<s_0<\frac{1}{10}$ and $M=M(v, \tau, s_0)>0$ such that for $s \in (0, s_0)$,
\begin{equation}
\label{7.22}
\max_{|y|=s} |D^\tau v(y)| \leq \left \{ \begin{array}{ll} M s^{1-\tau} \;\; &\mbox{for $p> \frac{3}{2}$},\vspace{1mm}\\
 M s^{1-\epsilon-\tau} \;\; &\mbox{for $p=\frac{3}{2}$}, \vspace{1mm}\\
  M s^{2(p-1)-\tau} \;\; &\mbox{for $1<p<\frac{3}{2}$}.
 \end{array} \right.
\end{equation}
\end{lem}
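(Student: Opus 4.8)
The proof will follow the scheme of Proposition~\ref{p4.2}: we first establish \eqref{7.22} for $\tau=0$, then for $\tau=1$, and finally obtain the general case by induction on $\tau$, differentiating the spherical-harmonic expansion $w(s,\theta)=\sum_{i=1}^\infty\sum_{j=1}^{m_i}w_j^i(s)Q_j^i(\theta)$ term by term and summing the resulting series. Throughout we use $v(s,\theta)={\overline v}(s)+w(s,\theta)$, the bounds $\max_{S^{N-1}}|Q_j^i|\le D_i$ and $\max_{S^{N-1}}|(Q_j^i)_\theta|\le E_i$ from \eqref{2.4-1} (with $D_i,E_i$ growing only polynomially in $i$, and $m_i$ of polynomial order in $i$ by \eqref{2.4}), and the estimates for ${\overline v}$ and its derivatives from Lemma~\ref{l7.2}.

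For $\tau=0$, set $z_j^i(t)=w_j^i(s)$ with $t=-\ln s$. By \eqref{7.12-1}, \eqref{7.12-2} and \eqref{7.12-3} we have $|z_j^1(t)|=O(e^{-(t-T)})$, $|z_j^2(t)|=O(e^{-N(t-T)})$ and $|z_j^i(t)|=O(i\,e^{\nu_2^{(i)}(t-T)})$ for $i\ge 3$, where $\nu_2^{(i)}=2-N-i\to-\infty$. Hence, for $t$ large,
$$\max_{\theta\in S^{N-1}}|w(s,\theta)|\le\sum_{i=1}^\infty\sum_{j=1}^{m_i}D_i\,|w_j^i(s)|\le C\,D_1 m_1\,e^{-(t-T)}+C\sum_{i\ge 2}i\,m_i D_i\,e^{\nu_2^{(i)}(t-T)}.$$
Since $\lim_{i\to\infty}\frac{(i+1)m_{i+1}D_{i+1}}{i\,m_i D_i}=1$, for $t>T^*:=10T$ the last series is dominated by a geometric series and is $O(e^{-N(t-T)})=o(e^{-(t-T)})$, so $\max_\theta|w(s,\theta)|\le Cs$ for $0<s<s_0:=e^{-T^*}$. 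Combining with the bounds for ${\overline v}(s)$ in \eqref{7.17} — which are $O(s)$, $O(s^{1-\epsilon})$ and $O(s^{2(p-1)})$ according as $p>\frac{3}{2}$, $p=\frac{3}{2}$ or $1<p<\frac{3}{2}$, and which dominate the term $Cs$ in the last two cases since $1-\epsilon<1$ and $2(p-1)<1$ — gives \eqref{7.22} for $\tau=0$.

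For $\tau=1$ we bound $|\nabla w|^2=w_s^2+s^{-2}|w_\theta|^2$. Differentiating the ODE representation of $z_j^i$ and using $(w_j^i)'(s)=-(z_j^i)'(t)e^t$, each $t$-derivative introduces a factor at most $\max_k|\nu_k^{(i)}|=O(i)$, so $D_i|(w_j^i)'(s)|\le{\tilde M}_i\,s^{-(\nu_2^{(i)}+1)}$ for $i\ge 2$ and $D_1|(w_j^1)'(s)|\le{\tilde M}_1$; summing as above yields $\max_\theta|w_s(s,\theta)|\le M_1$, while $\max_\theta|w_\theta(s,\theta)|\le\sum E_i|w_j^i(s)|\le M_2 s$ by the $\tau=0$ argument with $E_i$ in place of $D_i$. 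Thus $\max_{|y|=s}|\nabla w(y)|\le{\hat M}$, and together with the bounds on ${\overline v}'(s)$ in \eqref{7.17} ($O(1)$, $O(s^{-\epsilon})$, $O(s^{2(p-1)-1})$ respectively) this gives \eqref{7.22} for $\tau=1$. The general case $\tau\ge 2$ follows by the same bookkeeping: differentiating the expansion $\tau$ times in $s$ and $\theta$ introduces factors bounded by $i^\tau$ and by $D_i$ or $E_i$, while the higher derivatives of ${\overline v}$ are controlled by differentiating \eqref{7.21} further. The step requiring the most care, exactly as in Proposition~\ref{p4.2}, is the uniform convergence in $i$ of all these series for $0<s<s_0$; this holds because the geometric factor $s^{i}$ coming from $e^{\nu_2^{(i)}t}$ beats the polynomial growth of $i^\tau m_i D_i$ (resp. $i^\tau m_i E_i$), so no genuinely new difficulty arises — the dependence on $p$ enters only through the three regimes of Lemma~\ref{l7.2}.
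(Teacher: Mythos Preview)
Your proposal is correct and follows exactly the approach the paper intends: its own proof reads in full ``Similar to the proof of Proposition~\ref{p4.2}'', and you have simply written out that analogy in detail, using the mode estimates \eqref{7.12-1}--\eqref{7.12-3} from Proposition~\ref{p7.1} together with the ${\overline v}$ bounds of Lemma~\ref{l7.2}. The only cosmetic remark is that your summability argument is even a bit more explicit than what the paper does in Proposition~\ref{p4.2}.
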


\proof Similar to the proof of Proposition \ref{p4.2}. \qed

Let
$${\tilde w}(s, \theta)=\frac{w(s, \theta)}{s}.$$
Then ${\tilde w}(s, \theta)$ satisfies the equation
\begin{eqnarray}
\label{7.27}
& & \partial_s^4{\tilde w}-2(N-5) s^{-1} \partial_s^3{\tilde w}+(N-3)(N-7) s^{-2} \partial_s^2{\tilde w}+(N-1)(N-3) s^{-3} \partial_s{\tilde w} \nonumber \\
& &\;\;\;\;\;\;-(N-1)(N-3) s^{-4} {\tilde w}+2s^{-4} \Delta_\theta {\tilde w}-2(N-3) s^{-3} \Delta_\theta (\partial_s{\tilde w})  \nonumber \\
& &\;\;\;\;\;\;+2 s^{-2} \Delta_\theta (\partial_s^2{\tilde w})+s^{-4} \Delta_\theta^2 {\tilde w}-s^{-4} g ({\tilde w})=0,
\end{eqnarray}
where
$$ g(\tilde{w})
=-ps^{2(p-1)}[ (\xi(s,\theta)+D)^{-(p+1)} {\tilde w}-\overline{(\xi(s,\theta)+D)^{-(p+1)} {\tilde w}}],$$
where $\xi(s,\theta)$ is between $v(s,\theta)$ and $\bar{v}(s,\theta)$.
We also have
$${\tilde w}(s, \theta)=\sum_{i=1}^\infty \sum_{j=1}^{m_i} {\tilde w}_j^i (s) Q_j^i (\theta), \;\;\;\; {\tilde w}_j^i (s)=\frac{w_j^i (s)}{s}.$$
Then, ${\tilde w}_j^i (s)$ satisfies the equation:
\begin{eqnarray}
\label{7.28}
& &({\tilde w}_j^i)^{(4)}-2(N-5)s^{-1}({\tilde w}_j^i)_{sss}+[(N-3)(N-7)-2 \lambda_i] s^{-2} ({\tilde w}_j^i)_{ss} \nonumber \\
& &\;\;\;\;\;\;+[(N-1)(N-3)+(2N-6) \lambda_i] s^{-3} ({\tilde w}_j^i)_s  \nonumber \\
& &\;\;\;\;\;\;+[-(N-1)(N-3)-2 \lambda_i+\lambda_i^2] s^{-4} {\tilde w}_j^i=s^{-4} {\hat g}_j^i (s),
\end{eqnarray}
where ${\hat g}_j^i (s)=\int_{S^{N-1}} g(\tilde{w})Q_j^i (\theta) d \theta$. We also know that
$$|{\hat g}_j^i (s)| \leq O(s^{2(p-1)}) {\tilde W} (s),$$
where ${\tilde W}(s)=(\int_{S^{N-1}} |{\tilde w} (s, \theta)|^2 d \theta)^{1/2}$.

Let ${\tilde z}_j^i (t)={\tilde w}_j^i (s)$, $t=-\ln s$, ${\tilde Z} (t)={\tilde W} (s)$. We see that ${\tilde z}_j^i (t)$ satisfies the equation (for $t$ near $\infty$):
\begin{eqnarray}
\label{7.29}
& &({\tilde z}_j^i)^{(4)}+2(N-2) ({\tilde z}_j^i)_{ttt}+(N^2-4N+2-2 \lambda_i) ({\tilde z}_j^i)_{tt} \nonumber\\
& &\;\;\;\;-2[N-2+(N-2) \lambda_i] ({\tilde z}_j^i)_t +[-(N-1)(N-3)-2 \lambda_i+\lambda_i^2] {\tilde z}_j^i \nonumber \\
& &\;\;\;\;={\tilde g}_j^i (t),
\end{eqnarray}
where ${\tilde g}_j^i (t)={\hat g}_j^i (s)$ and $|{\tilde g}_j^i (t)| \leq O(e^{-2(p-1) t}) {\tilde Z}(t)$. Since ${\tilde Z}(t)=e^t Z(t)=O(1)$ (see Proposition \ref{p7.1}), we see that
$|{\tilde g}_j^i (t)|=O(e^{-2(p-1) t})$.
The corresponding polynomial of \eqref{7.29} is
\begin{eqnarray}
\label{7.30}
& & \nu^{4}+2(N-2) \nu^3+(N^2-4N+2-2 \lambda_i) \nu^2-2[N-2+(N-2) \lambda_i] \nu  \nonumber \\
& &\;\;\;\;\;\;+[-(N-1)(N-3)-2 \lambda_i+\lambda_i^2]=0,
\end{eqnarray}
which has four roots:
$${\tilde \nu}_k^{(i)}=\nu_k^{(i)}+1, \;\; k=1,2,3,4,$$
i.e.,
\begin{equation}
\label{7.31}
{\tilde \nu}_1^{(i)}=i+1, \;\; {\tilde \nu}_2^{(i)}=3-N-i, \;\; {\tilde \nu}_3^{(i)}=i-1, \;\; {\tilde \nu}_4^{(i)}=1-N-i.
\end{equation}
Since for each $(i,j)$, $|{\tilde z}_j^i (t)|$ is bounded, arguments similar to those in the proof of Proposition \ref{p7.1} imply that $\Sigma_{i=2}^\infty \Sigma_{j=1}^{m_i} |{\tilde z}_j^i (t)| \to 0$ as $t \to \infty$.
We see that, for $i=1$, the four roots are given by
$${\tilde \nu}_1^{(1)}=2, \;\; {\tilde \nu}_2^{(1)}=2-N, \;\; {\tilde \nu}_3^{(1)}=0, \;\; {\tilde \nu}_4^{(1)}=-N.$$
Thus
$${\tilde \nu}_4^{(1)}<{\tilde \nu}_2^{(1)}<-1<0={\tilde \nu}_3^{(1)}<{\tilde \nu}_1^{(1)}$$
and
\begin{eqnarray*}
{\tilde z}_j^1 (t)&=&C+A_{j,2}^1 e^{{\tilde \nu}_2^{(1)} t}+A_{j,4}^1 e^{{\tilde \nu}_4^{(1)} t}\\
& &\;\;\;\;-B_1^1 \int_t^\infty e^{{\tilde \nu}_1^{(1)} (t-\tau)} O(e^{-2(p-1)t}) d \tau-B_3^1 \int_t^\infty O(e^{-2(p-1)t}) d \tau\\
& &\;\;\;\;+B_2^1 \int_T^t e^{{\tilde \nu}_2^{(1)} (t-\tau)} O(e^{-2(p-1)t}) d \tau+B_4^1 \int_T^t e^{{\tilde \nu}_4^{(1)} (t-\tau)} O(e^{-2(p-1)t}) d \tau.
\end{eqnarray*}
This implies that ${\tilde z}_j^1 (t) \to A_j$ ($A_j$ is a constant, maybe 0) as $t \to \infty$. Since $Q_1^1 (\theta), \ldots, Q_{m_1}^1 (\theta)$ are the eigenfunctions corresponding to the eigenvalue $\lambda_1=N-1$, and thus we see that
\begin{equation}
\label{7.32}
\lim_{s \to 0} {\tilde w} (s, \theta)=V(\theta).
\end{equation}

In conclusion, we have the following theorem.

\begin{thm}
\label{t7.4}
Let $v$ be a solution of \eqref{7.2} and ${\tilde w}$ be given in \eqref{7.27}. Then we have

(i) $v(y)={\overline v}(s)+s {\tilde w} (s, \theta)$ satisfies
$$\left \{ \begin{array}{llll} |{\overline v}(s)| \leq M s, & |{\overline v}'(s)| \leq M , & |{\overline v}''(s)| \leq Ms^{-1} \;&\mbox{for $p>\frac{3}{2}$},\\
|{\overline v}(s)| \leq M s^{1-\epsilon}, & |{\overline v}'(s)| \leq M s^{-\epsilon}, & |{\overline v}''(s)| \leq Ms^{-1-\epsilon} \;&\mbox{for $p=\frac{3}{2}$},\\
|{\overline v}(s)| \leq M s^{2(p-1)}, & |{\overline v}'(s)| \leq M s^{2(p-1)-1}, & |{\overline v}''(s)| \leq Ms^{2(p-1)-2} \; &\mbox{for $1<p<\frac{3}{2}$}.
\end{array} \right.$$

(ii) For any non-negative integers $\tau$ and $\tau_1$, there exists $M=M(v, \tau, \tau_1)>0$ such that
\begin{equation}
\label{7.33}
|s^\tau D_\theta^{\tau_1} D_s^\tau {\tilde w} (y)| \leq M, \;\; y \in B_{s_0}, \;\; y \neq 0,
\end{equation}
where $B_{s_0}=\{y \in \R^N: \; |y|<s_0\}$.
Moreover, ${\tilde w}$ satisfies
\begin{equation}
\label{7.34}
\lim_{s \to 0} {\tilde w} (s, \theta)=V(\theta),
\end{equation}
uniformly in $C^\tau (S^{N-1})$, where $V(\theta)$ is given by (\ref{4.24}).
\end{thm}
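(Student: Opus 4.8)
The plan is to consolidate the estimates already obtained in this section with the modal analysis carried out for Proposition~\ref{p4.2}. Part~(i) is nothing but Lemma~\ref{l7.2}: since $w=v-\overline v$ and, by definition, $\tilde w(s,\theta)=w(s,\theta)/s$, one has $v(y)=\overline v(s)+s\tilde w(s,\theta)$, and the three sets of bounds on $\overline v,\overline v',\overline v''$ are precisely \eqref{7.17}, so I would simply quote that lemma.

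For the pointwise estimate \eqref{7.33} in part~(ii) I would follow the scheme of Proposition~\ref{p4.2}. Expanding $\tilde w(s,\theta)=\sum_{i\ge1}\sum_{j=1}^{m_i}\tilde w_j^i(s)Q_j^i(\theta)$ and differentiating term by term gives
\[\max_{\theta\in S^{N-1}}\big|D_\theta^{\tau_1}D_s^\tau\tilde w(s,\theta)\big|\le\sum_{i\ge1}\sum_{j=1}^{m_i}\big|(\tilde w_j^i)^{(\tau)}(s)\big|\,\max_{\theta\in S^{N-1}}\big|D_\theta^{\tau_1}Q_j^i(\theta)\big|,\]
where the $C^{\tau_1}$-norms of the $Q_j^i$ grow only polynomially in $\lambda_i$ by a boot-strap argument in the spirit of \eqref{2.4-1}--\eqref{2.4-2}. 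Passing to $t=-\ln s$ and using the decay $|\tilde z_j^i(t)|=O\!\big(i\,e^{\tilde\nu_2^{(i)}(t-T)}\big)$ for $i\ge2$ --- valid because $\tilde\nu_2^{(i)}=3-N-i<-1$ for $i\ge2$, $N\ge3$, by the roots \eqref{7.31}, and because $|\tilde g_j^i(t)|=O(e^{-2(p-1)t})\tilde Z(t)$ with $\tilde Z(t)=O(1)$ --- together with the boundedness of the $\tilde z_j^1(t)$, the chain rule converts $\tau$ derivatives in $s$ into $s^{-\tau}$ times $t$-derivatives of $\tilde z_j^i$, which enjoy the same exponential decay; so after multiplication by $s^\tau$ the resulting series converges uniformly on $\{0<|y|<s_0\}$ and its sum is the desired bound. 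The convergence of the sum over $i$ uses, exactly as in Section~4, that the polynomial growth of the spherical-harmonic norms and of $i\,m_i$ is absorbed by $e^{\tilde\nu_2^{(i)}(t-T)}$ once $t>T^*$ is large.

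For the limit \eqref{7.34}, I would invoke the analysis preceding \eqref{7.32}, which already shows $\tilde z_j^1(t)\to A_j$ for $1\le j\le m_1$ while $\sum_{i\ge2}\sum_{j}|\tilde z_j^i(t)|\to0$ as $t\to\infty$; hence $\tilde w(s,\theta)\to\sum_{j=1}^{m_1}A_jQ_j^1(\theta)=:V(\theta)$, which is either $0$ or an element of the eigenspace of $-\Delta_\theta$ associated with the first positive eigenvalue $\lambda_1=N-1$, so $V(\theta)=\theta\cdot x_0$ for some $x_0\in\R^N$ by Lemma~8.1 of \cite{Zou} (cf. \eqref{4.24}). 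The uniformity in $C^\tau(S^{N-1})$ then follows from the derivative bounds of part~(ii): they yield equicontinuity of $\{\tilde w(s,\cdot)\}_{0<s<s_0}$ in each $C^\tau(S^{N-1})$, so the pointwise limit is attained in $C^\tau$.

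The main obstacle is the same bookkeeping that runs through Sections~3--5: one must check that the negative characteristic exponents $\tilde\nu_2^{(i)}$ stay uniformly below $-1$ as $i\to\infty$ and that the modal series still converge after $\tau$ differentiations in $s$ and $\tau_1$ in $\theta$, the growth of $\|Q_j^i\|_{C^{\tau_1}}$ and of $i\,m_i$ being killed by the exponential factor $e^{\tilde\nu_2^{(i)}(t-T)}$. Since the roots \eqref{7.31} are merely the shifts $\nu_k^{(i)}+1$ of those already treated and the forcing obeys $|\tilde g_j^i(t)|=O(e^{-2(p-1)t})\tilde Z(t)$, this reduces essentially verbatim to the argument of Proposition~\ref{p4.2}, so no genuinely new difficulty arises.
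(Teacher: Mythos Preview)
Your proposal is correct and follows essentially the same approach as the paper: the theorem is a consolidation of the preceding results, with part~(i) being Lemma~\ref{l7.2} verbatim, the pointwise bounds in part~(ii) obtained exactly as in Proposition~\ref{p4.2} (the paper's own Lemma~\ref{l7.3} simply says ``Similar to the proof of Proposition~\ref{p4.2}''), and the limit \eqref{7.34} being the content of the discussion culminating in \eqref{7.32}. The paper merely writes ``In conclusion, we have the following theorem'' without a separate proof, and your outline reconstructs precisely the pieces it is summarizing.
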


We obtain from Theorem \ref{t7.4} the asymptotic expansion of $u (x)$ near $|x|=\infty$.

\begin{thm}
\label{t7.5}
Let $N=3$ and $1<p<3$; $N \geq 4$ and $p>1$; $u$ be a solution of \eqref{1.1-0} satisfying \eqref{condition-nonminimal}. Then $u$ admits the expansion:
\begin{equation}
\label{7.35}
u(x)=r^2 \Big[D+\xi (r)+\frac{\eta (r, \theta)}{r} \Big],
\end{equation}
\begin{equation}
\label{7.36}
w(x):=-\Delta u(x)=-2ND+\xi_1 (r)+\frac{\eta_1 (r, \theta)}{r}
\end{equation}
where
$$\xi_1 (r)=-[r^2 \xi''+(N+3) r \xi'+2N \xi],$$
$$\eta_1 (r, \theta)=-[r^2 \eta_{rr}+(N+1) r \eta_{r}+(N-1) \eta+\Delta_\theta \eta].$$
Moreover, the following properties are satisfied:

(i) $\xi (r)=r^{-2} {\overline u}(r)-D$ and there exist $R_0\; (:=s_0^{-1})>0$ and a constant $M=M(u)>0$ such that, for $r>R_0$,
\begin{equation}
\label{7.37}
\left \{ \begin{array}{llll} |\xi (r)| \leq M r^{-1}, & |\xi'(r)| \leq M r^{-2}, & |\xi''(r)| \leq M r^{-3} \;&\mbox{for $p> \frac{3}{2}$},\\
|\xi (r)| \leq M r^{-(1-\epsilon)}, & |\xi'(r)| \leq M r^{-(2-\epsilon)}, & |\xi''(r)| \leq M r^{-(3-\epsilon)} \;&\mbox{for $p=\frac{3}{2}$},\\
|\xi (r)| \leq M r^{-2(p-1)}, & |\xi'(r)| \leq M r^{-2p+1}, & |\xi''(r)| \leq Mr^{-2p} \; &\mbox{for $1<p<\frac{3}{2}$},
\end{array} \right.
\end{equation}
\begin{equation}
\label{7.38}
|\xi_1 (r)| \leq \left \{ \begin{array}{ll} M r^{-1} \; &\mbox{for $p> \frac{3}{2}$},\vspace{1mm}\\
 M r^{-(1-\epsilon)} \; &\mbox{for $p=\frac{3}{2}$},\\
 M r^{-2(p-1)} \; &\mbox{for $1<p<\frac{3}{2}$}.
\end{array} \right.
\end{equation}

(ii) Let $\tau$ and $\tau_1$ be two non-negative integers. Then there exists a positive constant $M:=M(u, \tau, \tau_1)$ such that, for $r>R_0$,
\begin{equation}
\label{7.39}
|r^\tau D_\theta^{\tau_1} D_r^\tau \eta (r, \theta)| \leq M,
\end{equation}
\begin{equation}
\label{7.40}
|\eta_1 (r, \theta)| \leq M.
\end{equation}

(iii) Let $\tau$ be a non-negative integer. Then $\eta (r, \theta)$ tends to $V(\theta)$ uniformly in $C^\tau (S^{N-1})$ as $r \to \infty$,
 where $V(\theta)$ is given by (\ref{4.24}).

\end{thm}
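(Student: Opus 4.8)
The plan is to obtain Theorem~\ref{t7.5} from Theorem~\ref{t7.4} by unwinding the Kelvin-type transformation \eqref{7.1}. Writing $s=|y|=r^{-1}$, the relation $u(x)=|x|^{2}(v(y)+D)$ together with the decomposition $v(y)=\overline v(s)+s\tilde w(s,\theta)$ of Theorem~\ref{t7.4}(i) gives
\[
u(x)=r^{2}\Big[D+\overline v(1/r)+\tfrac1r\,\tilde w(1/r,\theta)\Big],
\]
so I set $\xi(r)=\overline v(1/r)$ and $\eta(r,\theta)=\tilde w(1/r,\theta)$; averaging over $\theta\in S^{N-1}$ shows $\overline u(r)=r^{2}(D+\overline v(1/r))$, hence $\xi(r)=r^{-2}\overline u(r)-D$, which is \eqref{7.35}. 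For \eqref{7.36} I would apply $-\Delta=-\partial_r^2-(N-1)r^{-1}\partial_r-r^{-2}\Delta_\theta$ to the expression $u=Dr^{2}+r^{2}\xi(r)+r\,\eta(r,\theta)$ summand by summand, obtaining $-\Delta(Dr^{2})=-2ND$, $-\Delta(r^{2}\xi)=-[r^{2}\xi''+(N+3)r\xi'+2N\xi]=\xi_1(r)$, and $-\Delta(r\eta)=-\tfrac1r[r^{2}\eta_{rr}+(N+1)r\eta_r+(N-1)\eta+\Delta_\theta\eta]=\eta_1(r,\theta)/r$.

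For part (i) I would use the chain rule $s=1/r$, $ds/dr=-s^{2}$, which yields $\xi'(r)=-s^{2}\overline v'(s)$ and $\xi''(r)=2s^{3}\overline v'(s)+s^{4}\overline v''(s)$; feeding in the three bounds of Theorem~\ref{t7.4}(i) in each of the cases $p>\frac32$, $p=\frac32$, $1<p<\frac32$ turns these into \eqref{7.37}, and substituting them into $\xi_1(r)=-[r^{2}\xi''+(N+3)r\xi'+2N\xi]$, using cancellations such as $r^{2}s^{3}=r^{-1}$, gives \eqref{7.38}. For part (ii), $D_r^{\tau}$ applied to $\tilde w(1/r,\theta)$ is a finite linear combination of terms $s^{\tau+j}D_s^{j}\tilde w$ with $1\le j\le\tau$, so $r^{\tau}D_\theta^{\tau_1}D_r^{\tau}\eta$ is a combination of terms $s^{j}D_\theta^{\tau_1}D_s^{j}\tilde w$, each bounded by the estimate $|s^{j}D_\theta^{\tau_1}D_s^{j}\tilde w|\le M$ of Theorem~\ref{t7.4}(ii); this proves \eqref{7.39}, and \eqref{7.40} follows because $|r^{2}\eta_{rr}|$, $|r\eta_r|$, $|\eta|$ and $|\Delta_\theta\eta|$ are all $O(1)$ by the same reasoning. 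Part (iii) is immediate from \eqref{7.34}, since $r\to\infty$ corresponds to $s\to0$.

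The analytic substance has already been carried out in Proposition~\ref{p7.1} and Lemmas~\ref{l7.2}--\ref{l7.3}, and is packaged into Theorem~\ref{t7.4}; within the present deduction the only genuine hazard is bookkeeping: keeping the powers of $s$ straight under the substitution $s=r^{-1}$ across the case split at $p=\frac32$, where a logarithmic factor appears in the intermediate estimate of Lemma~\ref{l7.2} and is absorbed by the small exponent loss $\epsilon$, and getting the signs and coefficients right in the spherical-coordinate computation producing $\xi_1$ and $\eta_1$. I expect the latter identities to be the most error-prone point, and would verify them by differentiating $Dr^{2}$, $r^{2}\xi$ and $r\eta$ separately rather than expanding $u$ all at once.
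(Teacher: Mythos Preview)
Your proposal is correct and follows essentially the same approach as the paper, which simply states that Theorem~\ref{t7.5} is obtained from Theorem~\ref{t7.4} via the Kelvin-type transformation \eqref{7.1} (in parallel with how Theorem~\ref{t4.7} is deduced from Theorem~\ref{t4.6}). Your explicit verification of the identities for $\xi_1$ and $\eta_1$ and the chain-rule translation of the bounds in the three ranges of $p$ are exactly what the paper leaves implicit.
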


{\bf Completion of the proof of Theorem \ref{main-3}}

We first write \eqref{1.1-0} to a system of equations:
\begin{equation}
\label{7.42}
\left \{ \begin{array}{ll} -\Delta u=v, \;\;\;& \mbox{in $\R^N$},\\
-\Delta v=-u^{-p}, \;\;\; &\mbox{in $\R^N$}. \end{array} \right.
\end{equation}

We now start the procedure of moving-plane.
As a consequence of the expansions of $u(x)$ in Theorem \ref{t7.5}, we have the following lemma.

\begin{lem}
\label{l7.6}
Let $N=3$ and $1<p<3$; $N \geq 4$ and $p>1$; $u$ be a solution of \eqref{1.1-0} satisfying \eqref{condition-nonminimal}. Then,

(i) If $\gamma^j \in \R \to \gamma$ and $\{x^j\} \to \infty$ with $x_1^j<\gamma^j$, then
\begin{equation}
\label{7.43}
\lim_{j \to \infty} \frac{1}{\gamma^j-x_1^j} \Big[u(x^j)-u((x^j)^\gamma) \Big]=-4D \gamma-2 (x_0)_1,
\end{equation}
where $(x_0)_1$ is the first component of $x_0$ given in (\ref{4.24}).

(ii) Define
\begin{equation}
\label{7.44}
\gamma_0=-\frac{(x_0)_1}{2D}.
\end{equation}
Then there exists a constant $M=M(u)>0$ such that
\begin{equation}
\label{7.45}
\frac{\partial u}{\partial x_1} \geq 0 \;\; \mbox{if $x_1 \geq \gamma_0+1$ and $|x| \geq M$}.
\end{equation}
\end{lem}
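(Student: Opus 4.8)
The plan is to derive Lemma \ref{l7.6} directly from the asymptotic expansion of $u(x)$ and $-\Delta u(x)$ near $|x| = \infty$ given in Theorem \ref{t7.5}, in close parallel to Lemma \ref{l5.1}. First I would write, for $x$ large, $u(x) = r^2\bigl[D + \xi(r) + r^{-1}\eta(r,\theta)\bigr]$ with $r = |x|$ and $\theta = x/|x|$. Given $\gamma^j \to \gamma$ and $x^j \to \infty$ with $x_1^j < \gamma^j$, set $y^j = (x^j)^{\gamma^j}$; note $|y^j|^2 - |x^j|^2 = (2\gamma^j - 2x_1^j)(\text{something})$ — more precisely $|y^j|^2 = |x^j|^2 - 4(\gamma^j - x_1^j)(\gamma^j - x_1^j)/? $; the clean identity I would use is $|y^j|^2 - |x^j|^2 = 4\gamma^j(\gamma^j - x_1^j) - \cdots$, so that $|y^j|^2 - |x^j|^2 = -4(\gamma^j - x_1^j)x_1^j + 4(\gamma^j)^2 - 4(\gamma^j)^2 \cdots$. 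In any case the key elementary fact is $|y^j|^2 - |x^j|^2 = 4(\gamma^j - x_1^j)\,\gamma^j + o(\cdots)$ relative to the scales involved, and $|y^j| - |x^j| \to 0$ is not what happens — rather $|y^j|/|x^j| \to 1$. I would then expand
$$
u(x^j) - u(y^j) = D\bigl(|x^j|^2 - |y^j|^2\bigr) + \bigl(|x^j|^2\xi(|x^j|) - |y^j|^2\xi(|y^j|)\bigr) + \bigl(|x^j|\eta(|x^j|,\theta^j) - |y^j|\eta(|y^j|,\theta^{j,\gamma})\bigr),
$$
divide by $\gamma^j - x_1^j$, and pass to the limit.

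The three terms are handled as follows. For the leading term, the elementary geometry of reflection gives $\dfrac{|x^j|^2 - |y^j|^2}{\gamma^j - x_1^j} \to -4\gamma$ (since $|y^j|^2 = |x^j|^2 - 4(\gamma^j - x_1^j)(\text{average of }x_1^j\text{ and }\gamma^j) $ type expression, whose leading behaviour against $\gamma^j - x_1^j$ is $-4\gamma$ once one is careful that $x^j \to \infty$). For the $\xi$-term, I would use the decay estimates \eqref{7.37}–\eqref{7.38}: writing $|x^j|^2\xi(|x^j|) - |y^j|^2\xi(|y^j|)$ via the mean value theorem along the segment from $|x^j|$ to $|y^j|$ and using $|(r^2\xi)'| = |2r\xi + r^2\xi'| \le M$ for $p > 3/2$ (and $\le Mr^{\epsilon}$, $\le Mr^{2-2p}\cdot r = Mr^{3-2p}$ in the borderline/subcritical cases — all of which, divided by $\gamma^j - x_1^j$ and using $||x^j| - |y^j|| \le C(\gamma^j - x_1^j)/|x^j| \cdot |x^j|$, tend to $0$), this contribution vanishes in the limit. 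For the $\eta$-term, Theorem \ref{t7.5}(iii) gives $\eta(r,\theta) \to V(\theta) = \theta\cdot x_0$ uniformly in $C^1(S^{N-1})$, so $|x^j|\eta(|x^j|,\theta^j) - |y^j|\eta(|y^j|,\theta^{j,\gamma})$ is, to leading order, $|x^j|V(\theta^j) - |y^j|V(\theta^{j,\gamma}) = \theta^j\cdot x_0\,|x^j| - \theta^{j,\gamma}\cdot x_0\,|y^j|$. Since reflecting $x$ about $x_1 = \gamma$ changes only the first coordinate by $x_1 \mapsto 2\gamma - x_1$, we get $(\theta^{j,\gamma}|y^j|) - (\theta^j|x^j|) = (2\gamma - 2x_1^j, 0, \ldots, 0)$, hence $|x^j|V(\theta^j) - |y^j|V(\theta^{j,\gamma}) = -(x_0)_1(2\gamma - 2x_1^j) = -2(x_0)_1(\gamma - x_1^j) + o(\gamma^j - x_1^j)$, giving $-2(x_0)_1$ after dividing; the error from replacing $\eta$ by $V$ is controlled by the uniform $C^1$ convergence and the estimates \eqref{7.39}. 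Collecting the three limits yields $-4D\gamma - 2(x_0)_1$, which is \eqref{7.43}.

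For part (ii), I would compute $\partial u/\partial x_1$ directly from \eqref{7.35}. In polar form $\partial_{x_1} = \cos\theta_1\,\partial_r - r^{-1}\sin\theta_1\,\partial_{\theta_1}$ (schematically, using that $x_1 = r\,\theta\cdot e_1$), so
$$
\frac{\partial u}{\partial x_1} = 2x_1\Bigl[D + \xi(r) + \frac{\eta}{r}\Bigr] + r^2\Bigl[\xi'(r)\frac{x_1}{r} + \partial_{x_1}\!\Bigl(\frac{\eta}{r}\Bigr)\Bigr].
$$
The dominant term is $2x_1 D$. Using \eqref{7.37} for $\xi, \xi'$ and \eqref{7.39} for $\eta$ and its derivatives, every other term is $o(|x|)$ uniformly (the worst case being the $p \in (1,3/2)$ or $p = 3/2$ branch, where one still gets a negative power of $r$ times at most $r$, hence $o(r)$ — or at least bounded, which suffices). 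Therefore there is $M > 0$ so that for $|x| \ge M$ and $x_1 \ge \gamma_0 + 1$ one has $2x_1 D \ge 2D(\gamma_0 + 1) = -\,(x_0)_1\cdot 2 + 2D > \ $ the sum of the remaining terms in absolute value, whence $\partial u/\partial x_1 \ge 0$. The main obstacle — and the point requiring genuine care rather than bookkeeping — is the borderline and subcritical ranges $p \le 3/2$ (for $N = 3$), where the decay of $\xi$ in \eqref{7.37} is only $O(r^{-(1-\epsilon)})$ or $O(r^{-2(p-1)})$ with $2(p-1)$ possibly small; one must check that $r^2\xi'(r) = O(r^{3-2p})$ (or $O(r^{1+\epsilon})$) still divided by $r$, against the reflection scale, and multiplied out in part (i)'s difference quotient, genuinely tends to $0$ and does not pollute the limit, and similarly that it is dominated by $2x_1 D$ in part (ii). All of this follows from the explicit exponents in Theorem \ref{t7.5}, but it is the step where the statement "the $\xi$-contribution is negligible" has to be justified case by case rather than quoted.
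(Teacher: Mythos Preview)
Your treatment of part (i) is essentially the paper's: split $u(x^j)-u((x^j)^{\gamma^j})$ into the $D$-, $\xi$-, and $\eta$-contributions, use the exact identity $|x^j|^2-|(x^j)^{\gamma^j}|^2=-4\gamma^j(\gamma^j-x_1^j)$ for the first, the mean value theorem together with \eqref{7.37} for the second, and the $C^1$-convergence $\eta\to V$ for the third. Your write-up of the elementary reflection identity is muddled, but the intended argument is the correct one.

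For part (ii), however, your route \emph{diverges} from the paper's and, as written, has a real gap. The paper does \emph{not} differentiate $u$ directly; it argues by contradiction: if $\partial u/\partial x_1(x^j)<0$ along a sequence with $x_1^j\ge\gamma_0+1$, one manufactures reflections $(x^j)^{\gamma^j}$ with $u(x^j)>u((x^j)^{\gamma^j})$ and then invokes part (i) (and its variant for $\gamma^j\to\infty$) to reach a sign contradiction. Your direct approach can be made to work, but not with the bookkeeping you propose. The issue is that among your ``remaining terms'' sits $\partial_{x_1}(r\eta)$, which does \emph{not} tend to zero: since $r\eta\to x\cdot x_0$, one has $\partial_{x_1}(r\eta)\to (x_0)_1$, a constant of uncontrolled sign and size. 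Your inequality ``$2x_1D\ge 2D(\gamma_0+1)>\text{(sum of remaining terms)}$'' therefore fails whenever $(x_0)_1>D$, say. The correct direct computation must isolate this contribution and group it with the leading term, giving $\partial u/\partial x_1 = 2Dx_1+(x_0)_1+o(1)+x_1\cdot o(1)$; then $2Dx_1+(x_0)_1\ge 2D(\gamma_0+1)+(x_0)_1=2D>0$ for $x_1\ge\gamma_0+1$, and a two-case argument ($x_1$ bounded versus $x_1\to\infty$) closes uniformly. Either fix your grouping this way, or---more simply---adopt the paper's contradiction argument via (i), which sidesteps the need to track $\partial_{x_1}(r\eta)$ at all.
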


\proof To prove \eqref{7.43}, without loss of generality, we assume that
$$\lim_{j \to \infty} \frac{x_j}{|x_j|}={\overline \theta} \in S^{N-1}.$$
For simplicity, we also assume that $\gamma^j=\gamma$, $j=1,2, \ldots$ since the following arguments work equally well for the sequence $\{\gamma^j\}$.
Using the the expansion of $u$ in \eqref{7.35}, we have
\begin{eqnarray*}
\frac{1}{\gamma-x_1^j} \Big[u(x^j)-u((x^j)^\gamma) \Big] &=&\frac{1}{\gamma-x_1^j} \Big[D \Big(|x^j|^2-|(x^j)^\gamma|^2 \Big) \Big] \\
& &+\frac{1}{\gamma-x_1^j} \Big[|x^j|^2 \xi (|x^j|)-|(x^j)^\gamma|^2 \xi (|(x^j)^\gamma|) \Big] \\
& &+\frac{1}{\gamma-x_1^j} \Big[|x^j| \eta (|x^j|, \theta^j)-|(x^j)^\gamma| \eta (|(x^j)^\gamma|, (\theta^j)^\gamma) \Big]\\
&=&I+II+III.
\end{eqnarray*}
We have
$$D (|x^j|^2-|(x^j)^\gamma|^2)=-4D \gamma (\gamma-x_1^j)$$
and hence
$$I=-4D \gamma.$$
We also have that there is $\beta_j$ between $|x^j|$ and $|(x^j)^\gamma|$ such that
$$|x^j|^2 \xi (|x^j|)-|(x^j)^\gamma|^2 \xi (|(x^j)^\gamma|)=\Big[2 \beta_j \xi (\beta_j)+\beta_j^2 \xi'(\beta_j) \Big] \frac{-4 \gamma (\gamma-x_1^j)}{|x^j|+|(x^j)^\gamma|},$$
and in turn
\begin{eqnarray*}
II &=& \frac{1}{\gamma-x_1^j} \Big[2 \beta_j \xi (\beta_j)+\beta_j^2 \xi'(\beta_j) \Big] \frac{-4 \gamma (\gamma-x_1^j)}{|x^j|+|(x^j)^\gamma|}\\
&=& \left \{ \begin{array}{ll} O(|x_j|^{-1}) \to 0, \;\; &\mbox{for $p>\frac{3}{2}$}, \\
O(|x_j|^{-(1-\epsilon)}) \to 0, \;\; &\mbox{for $p=\frac{3}{2}$}, \\
O(|x_j|^{-2(p-1)}) \to 0, \;\; &\mbox{for $1<p<\frac{3}{2}$} \end{array} \right.
\end{eqnarray*}
as $j \to \infty$, since $\frac{|(x^j)^\gamma|}{|x^j|} \to 1$ as $j \to \infty$. Here we have used the estimates of $\xi (r)$ and $\xi'(r)$ in \eqref{7.37}. We now write
\begin{eqnarray*}
III &=& \frac{\eta (|(x^j)^\gamma|, (\theta^j)^\gamma)}{\gamma-x_1^j} \Big[|x^j|-|(x^j)^\gamma| \Big] \\
& & \;\;\;+\frac{|x^j|}{\gamma-x_1^j} \Big[\eta (|x^j|, (\theta^j)^\gamma)-\eta (|(x^j)^\gamma|, (\theta^j)^\gamma) \Big] \\
& & \;\;\;+\frac{|x^j|}{\gamma-x_1^j} \Big[ \eta (|x^j|, \theta^j)-\eta (|x^j|, (\theta^j)^\gamma) \Big]\\
&=& III_1+III_2+III_3.
\end{eqnarray*}
As before, by \eqref{7.39} and arguments similar to those in the proof of (8.11) in Lemma 5.2 of \cite{Zou}, we obtain that $III_1=O(|x^j|^{-1}) \to 0$ as $j \to \infty$,
$III_2=O(|x^j|^{-1}) \to 0$ as $j \to \infty$ and $III_3 \to -2(x_0)_1$ as $j \to \infty$. These imply that \eqref{7.43} holds.

To prove \eqref{7.45}, we use \eqref{7.43}. Suppose that \eqref{7.45} is false. Then there exists a sequence $\{x^j\} \to \infty$ such that
$$\frac{\partial u}{\partial x_1} (x^j)<0, \;\; x^j_1 \geq \gamma_0+1, \;\; \forall j \in \mathbb{N}.$$
It follows that there exists a sequence of bounded positive numbers $\{d_j\}$ such that
$$u(x^j)>u(x_{d_j}), \;\; x_{d_j}=x^j+(2 d_j, 0, \ldots, 0), \;\; \forall j \in \mathbb{N}.$$
Let
$$\gamma^j=x_1^j+d_j>x_1^j.$$
We have
\begin{equation}
\label{7.46}
\frac{1}{\gamma^j-x_1^j} \Big[u(x^j)-u((x^j)^\gamma) \Big]>0, \;\; \forall j\in \mathbb{N}.
\end{equation}
There are two possibilities:
$$\lim_{j \to \infty} \inf \gamma^j<\infty, \;\;\; \lim_{j \to \infty} \gamma^j=\infty.$$
If the first case occurs, we choose a convergent subsequence of $\{\gamma^j\}$ (still denoted by $\{\gamma^j\}$) with the limit $\gamma \geq \gamma_0+1$
and apply \eqref{7.43} and \eqref{7.44} to obtain
$$\lim_{j \to \infty} \frac{1}{\gamma^j-x_1^j} \Big[u(x^j)-u((x^j)^\gamma) \Big]=-4D \gamma-2(x_0)_1 \leq -4D<0.$$
This contradicts \eqref{7.46}. We can derive a contradiction for the second case similarly. The proof is a little variant of the proof of Lemma 8.2 of \cite{Zou}. Thus, neither the first nor the
second case can occur and \eqref{7.45} holds. This completes the proof of this lemma. \qed

To complete the proof of the sufficiency, we use moving-plane arguments of the system of equations \eqref{7.42}. The proof is exactly the same as the proof of Theorem 1.1. We omit the details here.
\qed

\begin{rem}
We conjecture that the following conclusion holds:
{\it If $u \in C^4 (\R^N)$ is an entire solution of \eqref{1.1-0} with  $N=3$ and $1<p<3$ or $N\geq 4$ and $p>1$, then $u$ is the minimal radial entire solution of \eqref{1.1-0} about some $x_* \in \R^N$,
if and only if}
\begin{equation}
\label{7.47}
|x|^{-2} u(x) \to 0 \;\; \mbox{as $|x| \to \infty$}.
\end{equation}

This conjecture implies that if $u$ is an entire solution of \eqref{1.1-0} and \eqref{7.47} holds for $u$, then $u$ must have the exact asymptotic behavior at $\infty$:
$$|x|^{-\alpha}u(x)  \to L\;\; \mbox{as $|x| \to \infty$},$$
where $\alpha$ and $L$ are given in \eqref{alpha-L}.
\end{rem}

\end{document}